\theoremstyle{plain}
\newtheorem{theorem}{Theorem}[subsection]
\newtheorem{lemma}[theorem]{Lemma}
\newtheorem{proposition}[theorem]{Proposition}
\newtheorem{corollary}[theorem]{Corollary}
\newtheorem{theoremalpha}{Theorem}
\theoremstyle{definition}
\newtheorem{definition}[theorem]{Definition}
\newtheorem{example}[theorem]{Example}
\newtheorem{remark}[theorem]{Remark}
\newcommand{\F}{\mathbb{F}}
\newcommand{\Fp}{\F_p}
\newcommand{\OO}{\mathcal O}
\newcommand{\Z}{\ensuremath{\mathbb{Z}}}
\DeclareMathOperator{\BT}{BT}  
\DeclareMathOperator{\coker}{coker}
\DeclareMathOperator{\End}{End}
\DeclareMathOperator{\Ext}{Ext}
\DeclareMathOperator{\im}{im}
\DeclareMathOperator{\gr}{gr}
\DeclareMathOperator{\Hom}{Hom}
\DeclareMathOperator{\Homs}{\underline{Hom}}
\DeclareMathOperator{\Lie}{Lie}
\DeclareMathOperator{\Sm}{Sm}
\DeclareMathOperator{\Spec}{Spec}
\DeclareMathOperator{\Spf}{Spf}
\DeclareMathOperator{\op}{op}
\DeclareMathOperator{\modc}{-mod}
\newcommand{\AbR}{\mathrm{Ab}_R}
\newcommand{\G}{\underline{G}}
\newcommand{\M}{\underline{M}}
\newcommand{\Gn}{\G_n}
\newcommand{\Mn}{\M_n}
\newcommand{\ol}[1]{\overline{#1}}
\newcommand{\ul}[1]{\underline{#1}}
\newcommand{\Gm}{\mathbb{G}_m}
\title[
	Dieudonné theory for {\lowercase{$n$}}-smooth group schemes
]{
	Dieudonné theory for 
	{\Large \lowercase{$n$}}-smooth group schemes
}
\author{Casimir Kothari}
\address{Boston University \\ Boston \\ MA}
\email{ckothari@bu.edu}
\author{Joshua Mundinger}
\address{University of California, Berkeley\\ Berkeley\\ CA}
\email{mundinger@berkeley.edu}
\date{\today}
\subjclass[2020]{Primary: 14L15; 
    Secondary: 14D23} 
\begin{document}

\begin{abstract}
For all $n \geq 1$, there is a notion of an $n$-smooth group scheme over any $\F_p$-algebra $R$, which may be thought of as a ``Frobenius analogue" of an $n$-truncated Barsotti--Tate group over $R$. We prove that the category of $n$-smooth commutative group schemes over $R$ is equivalent to a certain full subcategory of Dieudonn\'e modules over $R$.  As a consequence, we show that the moduli stack $\Sm_n$ of $n$-smooth commutative group schemes is smooth over $\Fp$ and that the natural truncation morphism $\Sm_{n+1} \to \Sm_n$ is smooth and surjective.  These results affirmatively answer conjectures of Drinfeld. 
\end{abstract}

\maketitle

\section{Introduction}

The goal of this paper is to prove a Dieudonn\'e-theoretic classification for so-called $n$-smooth group schemes over an arbitrary $\F_p$-algebra.  As a consequence, we obtain structural results on the moduli stacks of $n$-smooth group schemes, resolving conjectures of Drinfeld. 

Classification of commutative group schemes in terms of semilinear algebraic data began in 1955, when Dieudonné classified formal Lie groups over a perfect field in terms of modules over what is now known as the Dieudonné ring \cite{Die55}. Since Dieudonné's original papers, many mathematicians have worked to extend this classification to more general groups and base rings. Cartier classified formal Lie groups over a general base ring \cite{Car67Modules}. Grothendieck suggested attacking the problem for $p$-divisible groups by defining a suitable notion of Dieudonné crystal \cite{Gro74}, a task taken up in the following decade \cite{Mes72, MM74, BBM82}. More recently, Gabber and Lau gave a complete classification of commutative finite locally free group schemes of $p$-power order over a perfect $\Fp$-algebra in the spirit of classical Dieudonn\'e theory \cite{Lau13}, and Anschütz, Le Bras, and Mondal classified such group schemes over a quasi-syntomic base using prismatic methods \cite{ALB23, Mon24}.

In this article, we restrict our attention to a particular kind of finite locally free group scheme called an $n$-smooth group scheme, and we prove a Dieudonn\'e-theoretic classification of $n$-smooth commutative group schemes over an arbitrary $\Fp$-algebra. Our formulation and proof are elementary in nature, and make no appeal to crystalline or prismatic Dieudonn\'e theory.  Instead, we use homological properties of $n$-smooth group schemes and Cartier's theory of formal groups. 

Fix a prime number $p$ and let $R$ be an $\Fp$-algebra. 

\begin{definition}\label{definition: n-smooth group}
Let $n \in \mathbb{N}$. An $R$-group scheme $G$ is said to be \emph{$n$-smooth} if Zariski locally on $\Spec R$, there exists an isomorphism of $R$-schemes
\begin{equation*}
G \cong \Spec \frac{R[x_1, \dots, x_r]}{(x_1^{p^n}, \dots, x_r^{p^n})}
\end{equation*}
for some $r \in \mathbb{N}$, under which the identity section of $G$ corresponds to $x_1 = \cdots = x_r = 0$.
\end{definition}  

Such group schemes were studied in \cite[Chapter II.2]{Mes72} and \cite[Chapter VI §2]{Gro74} in connection with truncated Barsotti--Tate groups.  As we will see, one may think of commutative $n$-smooth group schemes as ``Frobenius analogues" of $n$-truncated Barsotti--Tate groups, with formal Lie groups playing the role of $p$-divisible groups.
In particular, if $G/\Spec R$ is $n$-smooth, then $G$ is finite locally free over $R$ and $F^n: G \to G^{(p^n)}$ is zero. 

Let $W(R)$ denote the ring of Witt vectors of $R$, and $\sigma, v: W(R) \to W(R)$ denote the Frobenius and Verschiebung, respectively. The \emph{Cartier--Dieudonné ring} $D_R$ is by definition the associative ring generated by $W(R)$ and two elements $F$ and $V$, subject to the relations $FV = p$ and the relations $Fa = \sigma(a)F, V \sigma(a) = aV$, and  $VaF = v(a)$ for all $a \in W(R)$. 
The last relation implies $VF = v(1)$; since $R$ is an $\Fp$-algebra, $v(1) = p$, so $VF = p$.

\begin{definition}\label{definition: cosmooth module}
Let $n \in \mathbb{N}$.  A left $D_R$-module $M$ is said to be \emph{$n$-cosmooth} if the following conditions are satisfied:
\begin{enumerate}
\item $V^n = 0$ on $M$;
\item $M/VM$ is a finitely generated projective $R$-module;
\item For all $i \in \{1, \dots, n-1\}$, the sequence of abelian groups
\begin{equation*}
\begin{tikzcd}
M \arrow[r, "V^i"] & M \arrow[r, "V^{n-i}"] & M
\end{tikzcd}
\end{equation*}
is exact.
\end{enumerate}
\end{definition}
For a finite locally free commutative group scheme $G/\Spec R$, we let $G^\vee$ denote its relative Cartier dual.

\begin{theoremalpha} \label{thm:equiv}
The functor 
\begin{equation} \label{eqn:equiv}
G \mapsto \Hom(G^\vee, (W_n)_R)
\end{equation}
defines an equivalence of categories between commutative $n$-smooth group schemes over $R$ and $n$-cosmooth $D_R$-modules.
\end{theoremalpha}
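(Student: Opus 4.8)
The plan is to identify the functor $G \mapsto M(G) := \Hom(G^\vee, (W_n)_R)$ with an $n$-truncation of the classical Cartier--Dieudonn\'e correspondence for formal Lie groups, and then to exhibit an explicit quasi-inverse. First I would pin down the $D_R$-module structure on $M(G)$: the ring-scheme structure on $(W_n)_R$ makes $M(G)$ a module over $W_n(R)$, hence over $W(R)$, and the Frobenius and Verschiebung endomorphisms of $(W_n)_R$ induce the operators $F$ and $V$; these satisfy the defining relations of $D_R$ precisely because the corresponding identities already hold on $(W_n)_R$. Since $V^n = 0$ on $(W_n)_R$, condition (1) of $n$-cosmoothness is immediate, and functoriality of $M$ is formal. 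Note that the functor is covariant: the two contravariances, from Cartier duality and from $\Hom(-,(W_n)_R)$, cancel.

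The next step is to verify conditions (2) and (3), i.e.\ that $M(G)$ is genuinely $n$-cosmooth. The main tool would be the family of short exact sequences of fppf abelian sheaves $0 \to (W_{n-i})_R \xrightarrow{V^i} (W_n)_R \to (W_i)_R \to 0$ coming from the Verschiebung filtration. Applying $\Hom(G^\vee, -)$ and using that $G^\vee$ is again finite locally free, one identifies $M/VM$ with $\Hom(G^\vee, \mathbb{G}_a) \cong \Lie(G)$, which is finite locally free of the expected rank by $n$-smoothness; this gives (2). The exactness of $M \xrightarrow{V^i} M \xrightarrow{V^{n-i}} M$ in (3) would then be read off from the compatibility of these sequences across different $i$. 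The delicate input here is that the sequences remain exact after applying $\Hom(G^\vee, -)$, i.e.\ that the groups $\Ext^1(G^\vee, (W_m)_R)$ do not obstruct; I expect to establish this by reducing Zariski-locally to the normal form $G \cong \Spec R[x_1, \dots, x_r]/(x_1^{p^n}, \dots, x_r^{p^n})$ and either computing directly or running a d\'evissage along $V$.

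For the equivalence itself I would argue in two stages. Full faithfulness should follow once $M$ is shown to be exact and conservative on the relevant subcategory: concretely, I would show the natural pairing between $G$ and $M(G)$ is perfect, so that $D_R$-linear maps $M(G_1) \to M(G_2)$ are in bijection with homomorphisms $G_1 \to G_2$, with $(W_n)_R$ serving as the dualizing object. The main obstacle is essential surjectivity: given an $n$-cosmooth module $M$, I must produce an $n$-smooth $G$ with $M(G) \cong M$. Here I would lift $M$ to a reduced Cartier module $\widetilde M$ over the Cartier ring --- using conditions (2) and (3) to guarantee the lift is $V$-flat and $V$-reduced, so that $\gr_V \widetilde M$ is free over the graded Cartier ring --- invoke Cartier's equivalence to obtain a formal Lie group $\widehat G$, and then take $G$ to be the Frobenius kernel $\widehat G[F^n]$, checking that $M(G)$ recovers the truncation $\widetilde M/V^n \widetilde M \cong M$ (the matching of the $F^n$-kernel on groups with the $V^n$-truncation on modules being exactly the Frobenius--Verschiebung swap built into the $G^\vee$ in the definition of $M$). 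The crux will be to show this construction is well-defined independently of the lift and valid over an \emph{arbitrary} $\Fp$-algebra $R$; I would control the base generality by checking compatibility with base change and descending from the local or perfect case, where the Cartier--Dieudonn\'e dictionary is classical.
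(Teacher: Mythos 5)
Your overall architecture matches the paper's: show the image is $n$-cosmooth via the Verschiebung filtration on $W_n$, prove full faithfulness by a duality with $W_n$ as dualizing object (the paper phrases this as an adjunction between $\Mn$ and $\Gn(M) = \Hom_{D_R}(M,W_n)$, and shows the unit is an isomorphism), and prove essential surjectivity by lifting an $n$-cosmooth module Zariski-locally to an $\infty$-cosmooth Cartier module via structure equations, invoking Cartier theory, truncating by $F^n$, and descending; that last part of your proposal is essentially identical to the paper's proof. However, the first of the two hard steps is a genuine gap in your plan. The exactness of $0 \to \Hom(G^\vee,W_j) \to \Hom(G^\vee,W_n) \to \Hom(G^\vee,W_{n-j}) \to 0$ cannot be obtained by ``reducing Zariski-locally to the normal form and computing directly or running a d\'evissage along $V$'': the normal form $R[x_1,\dots,x_r]/(x_1^{p^n},\dots,x_r^{p^n})$ is an isomorphism of \emph{pointed schemes} only, not of group schemes, so it gives no direct access to fppf $\Hom$ and $\Ext$ groups. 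Moreover $\Ext^1(G^\vee,\mathbb{G}_a)$ is genuinely nonzero (already for $\alpha_p$), so what must be proved is that the specific coboundary map $\Hom(G^\vee,W_{n-1}) \to \Ext^1(G^\vee,\mathbb{G}_a)$ vanishes. The paper's key idea, which your sketch is missing, is to write $K_1 \subset G^\vee$ for the canonical $1$-cosmooth subgroup and observe two things: restriction $\Hom(G^\vee,W_{n-1}) \to \Hom(K_1,W_{n-1})$ is zero because maps to $W_{n-1}$ factor through $G^\vee/K_1$; and $\Ext^1(G^\vee,\mathbb{G}_a) \to \Ext^1(K_1,\mathbb{G}_a)$ is an \emph{isomorphism}, proved via Grothendieck's formula $\tau_{\leq 1} R\Homs(A,\mathbb{G}_a) \simeq R\Homs(\ell_{A^\vee},\OO)$ together with a rank count in the long exact sequence of co-Lie complexes (this is where the normal form legitimately enters, since the co-Lie complex sees only the pointed scheme structure). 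Without an input of this kind, your d\'evissage has nothing to run on.

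The second gap is in full faithfulness. Your claim that it ``should follow once $M$ is shown to be exact and conservative'' is not valid logic: exact conservative functors need not be fully faithful (any faithful forgetful functor is a counterexample). The correct statement is the one you fall back on --- perfectness of the pairing, i.e., that the unit $G^\vee \to \Hom_{D_R}(\Mn(G^\vee),W_n)$ is an isomorphism --- but you give no argument for it, and this is where the paper works hardest: an induction on $n$ comparing the canonical extension $0 \to Q_n^{(p)} \to G^\vee \to Q_1 \to 0$ (where $Q_i$ is the cokernel of $V^i$) with the sequence obtained by applying $\G_{n+1}\M_{n+1}$ to it, constructing by hand a compatible map $T$ to the $n=1$ case, and concluding with the snake lemma; the base case $n=1$ is the classical correspondence with restricted Lie algebras. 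Your essential surjectivity step, by contrast, is sound and matches the paper (including your concern about descent, which is resolved exactly as you suggest: full faithfulness transports descent data on modules to descent data on groups); one small correction there is that no reduction to a perfect base is needed or possible --- Cartier theory is already available over an arbitrary $\F_p$-algebra, which is precisely why the paper uses it.
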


\begin{remark}
A $1$-smooth group scheme is a finite locally free group scheme killed by $F$.  Thus Theorem \ref{thm:equiv} in the case $n = 1$ is the well-known classification of height 1 group schemes in terms of restricted Lie algebras (see \cite[Theorem 2.4.2]{Dri24}, which references \cite[Exposé VII.A]{SGA3}).
At the other extreme, formal Lie groups over $R$ are naturally viewed as $\infty$-smooth groups over $R$ (see Lemma \ref{lemma: formal Lie}).  Cartier theory classifies commutative formal Lie groups $G$ over $R$ in terms of the Cartier module $\Hom(G^\vee, W_R)$.  Thus one may view Theorem \ref{thm:equiv} as an interpolation between the classifications for $n = 1$ and $n = \infty$. See §\ref{remark: interpolation with Cartier theory} for more details.
\end{remark}

\begin{remark}
It follows from Theorem \ref{thm:equiv} and Lemma \ref{lemma: adjunction} below that the quasi-inverse to the Dieudonn\'e module functor (\ref{eqn:equiv}) is the functor which associates to an $n$-cosmooth Dieudonn\'e module $M$ the $n$-smooth group scheme whose Cartier dual has functor of points $T \mapsto \Hom_{D_R}(M, W_n(T))$.  Thus our equivalence is just the naive extension of the classical Dieudonn\'e equivalence for unipotent group schemes over a perfect field \cite[Chapter II.4]{Gro74}.  The only difference in our formulation is that since we are over an arbitrary base, we must work with the truncated Witt vectors $W_n$ as opposed to the group ind-scheme $\varinjlim_n W_n$ (see Example \ref{example: Z/p} for further explanation of this point).
\end{remark}

For all $n \geq 1$, let $\Sm_n$ be the moduli stack of commutative $n$-smooth group schemes. It follows from the definition that if $G$ is $(n+1)$-smooth, then the Frobenius kernel $G[F^n]$ is $n$-smooth; this defines a truncation morphism $\tau_n: \Sm_{n+1} \to \Sm_n$.

\begin{theoremalpha} \label{thm:smooth}
For all $n \geq 1$, $\Sm_n$ is a smooth algebraic stack over $\Fp$, and the truncation morphism $\tau_n: \Sm_{n+1} \to \Sm_n$ is smooth and surjective. 
\end{theoremalpha}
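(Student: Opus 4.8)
The plan is to use Theorem \ref{thm:equiv} to transport the entire problem into semilinear algebra. Let $\mathcal{M}_n$ denote the moduli stack of $n$-cosmooth $D_R$-modules as $R$ ranges over $\F_p$-algebras; Theorem \ref{thm:equiv} upgrades to an isomorphism of stacks $\Sm_n \cong \mathcal{M}_n$, so it suffices to prove the two assertions for $\mathcal{M}_n$. A bookkeeping count on the $V$-adic filtration (carried out below) shows that the truncation $\tau_n$ corresponds under this equivalence to the functor $M \mapsto M/V^n M$: here $V^n$ kills $M/V^n M$, while $(M/V^n M)/V(M/V^n M) = M/VM$ is the \emph{same} finite projective $R$-module, so $M/V^n M$ is again $n$-cosmooth. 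I would first record this identification and verify that conditions (1)--(3) of Definition \ref{definition: cosmooth module} are inherited by the quotient.

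The structural heart of the argument is a local freeness statement. Using the exactness conditions (3) of Definition \ref{definition: cosmooth module}, one shows that for $0 \le i \le n-2$ the $\sigma$-semilinear operator $V$ induces an \emph{isomorphism} $\gr^i M := V^i M / V^{i+1} M \xrightarrow{\ \sim\ } \gr^{i+1} M$. Surjectivity is immediate; for injectivity, if $x = V^i z$ satisfies $Vx \in V^{i+2}M$, then $V^{i+1}z \in V^{i+1}(VM)$ gives $z \in \ker V^{i+1} + VM = V^{n-i-1}M + VM = VM$ by (3), whence $x \in V^{i+1}M$. Consequently $\gr^i M \cong P^{(p^i)}$ is the $i$-th Frobenius twist of $P := M/VM$, and $M$ is a successive extension of these, hence finite locally free over $R$ of rank $n \cdot \rank P$. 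This reduces the data of an $n$-cosmooth module, Zariski-locally, to a finite projective $P$ together with the $\sigma$-semilinear maps $F,V$ assembling the extension, and in particular shows that $\mathcal{M}_n$ is locally of finite presentation.

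For smoothness of $\mathcal{M}_n$ over $\F_p$ I would verify the infinitesimal lifting criterion. Given a square-zero extension $A \twoheadrightarrow A_0$ with kernel $I$ and an $n$-cosmooth $M_0$ over $A_0$, first lift $P_0 = M_0/VM_0$ to a finite projective $A$-module $P$ (possible and essentially unique since $I$ is nilpotent). By the previous paragraph $M_0$ is, locally, a direct sum of the twists $P_0^{(p^i)}$ with $V$ acting through the tautological isomorphisms on $\gr$, and I would lift the $\sigma$-semilinear structure maps $F$ and $V$ one graded step at a time. The obstruction at each step lies in a group of the form $\Ext^{\ge 1}_{A}(\text{projective}, I \otimes_A \text{projective})$, which vanishes; hence a lift $M$ exists, and by construction its graded pieces are the $P^{(p^i)}$ with $V$ the tautological isomorphisms, so $M$ is again $n$-cosmooth. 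This simultaneously supplies the effective, unobstructed deformation theory needed for algebraicity (via Artin's criteria, since finite locally free group schemes already form an algebraic stack) and the formal smoothness, exhibiting $\mathcal{M}_n$ as a smooth algebraic stack over $\F_p$.

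Finally, for $\tau_n \colon \mathcal{M}_{n+1} \to \mathcal{M}_n$, $M \mapsto M/V^n M$, I would apply the relative criterion. Surjectivity amounts to the claim that every $n$-cosmooth $M$ arises as $N/V^n N$ for some $(n+1)$-cosmooth $N$; one builds $N$ by appending a top graded piece $\gr^n N \cong P^{(p^n)}$ and extending $V$ by the tautological isomorphism, which manifestly satisfies Definition \ref{definition: cosmooth module}. For smoothness, given a lift of the truncation $M/V^n M$ across a square-zero extension, the remaining data needed to reconstruct a lift of $M$ is a lift of the top extension $0 \to V^n M \to M \to M/V^n M \to 0$ together with the map $V^{n-1}M \to V^n M$; the set of such lifts is a torsor under a vector group of the form $\Hom(P^{(p^n)}, I \otimes (\,\cdot\,))$, so $\tau_n$ is formally smooth. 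I expect the main obstacle to be precisely the interplay between the exactness conditions (3) and the lifting: these conditions are not a priori open or stable under deformation, and it is exactly the identification $\gr^i M \cong P^{(p^i)}$ of the second paragraph that forces the obstruction groups to vanish and guarantees the lifted module remains cosmooth. Once this linear-algebraic input is secured, both smoothness statements collapse to the unobstructedness of lifting projective modules and their Frobenius-semilinear maps.
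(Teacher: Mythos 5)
Your reduction to semilinear algebra via Theorem \ref{thm:equiv}, the identification of $\tau_n$ with $M \mapsto M/V^nM$, and the computation of graded pieces $\gr^i M = V^iM/V^{i+1}M \cong (M/VM)^{(p^i)}$ all match the paper's outline (the last point is implicit in the paper's Lemma \ref{lemma: isomorphism mod V} and Lemma \ref{lemma: multiplicative cosmooth coordinates}). The gap is the step on which all of your lifting arguments rest: the assertion that, Zariski-locally, $M_0$ \emph{is} the direct sum $\bigoplus_{i=0}^{n-1}P_0^{(p^i)}$ with $V$ acting by the tautological identifications. This is false, not merely unproved. The filtration steps $0 \to V^{i+1}M \to V^iM \to \gr^iM \to 0$ are extensions of $W(R)$-modules (indeed of $D_R$-modules), not of $R$-modules, so projectivity of the graded pieces over $R$ buys no splitting; and in general no splitting exists even over a field. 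Concretely, each $\gr^i M$ is killed by $v(W(R))$, hence any direct sum of such pieces is killed by $p = v(1) = FV$; but by Example \ref{example: Z/p} the $n$-cosmooth module $\Mn(\Z/p^n\Z) = W_n(R)$ has $p$ acting nontrivially for $n \geq 2$ --- over $R = \Fp$ its underlying abelian group is cyclic of order $p^n$, not elementary abelian. So no choice of ``tautological $V$'' and lifted $F$ on a direct sum can ever produce it. With this claim gone, your obstruction calculus (groups of the form $\Ext^{\geq 1}_A$ between projective $A$-modules), your surjectivity argument for $\tau_n$ (``appending a top graded piece''), and the claimed torsor structure for relative smoothness all lose their foundation: the objects being deformed are modules over the noncommutative ring $\mathbb{E}_{n,A} = D_A/V^nD_A$ whose iterated extensions are genuinely non-split, and that non-splitness is precisely where the arithmetic lives (it is what distinguishes $\Z/p^n\Z$ from $\alpha_{p^n}^\vee$).

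What the paper proves in place of your direct-sum claim is Proposition \ref{prop:n-cosmooth local}: Zariski-locally, an $n$-cosmooth module has a presentation by ``structure equations,'' $M \cong L\big/\big(Fg_i - \sum_j a_{ij}(V)g_j\big)$ with $L$ a free $\mathbb{E}_n$-module and $a_{ij}(V) \in W(R)\{V\}$, and conversely every such cokernel is $n$-cosmooth. Granting this, smoothness of $\Sm_n$ is simply the fact that the coefficients $a_{ij}(V)$ lift along any surjection $A \to B$; surjectivity of $\tau_n$ follows by reading the same relations in $\mathbb{E}_{n+1}$; and formal smoothness of $\tau_n$ uses one further normal form, the multiplicative coordinates of Lemma \ref{lemma: multiplicative cosmooth coordinates} (every element is uniquely $\sum_{i,j}V^i[a_i^j]e_j$ --- a bijection of sets via Teichm\"uller representatives, emphatically \emph{not} a module splitting), to match a lift with a prescribed truncation. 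To salvage your outline you would need to prove a statement of this kind: the graded-piece isomorphisms alone do not determine the $\mathbb{E}_n$-module structure, and that structure is the actual content being deformed.
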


The results of Theorems \ref{thm:equiv} and \ref{thm:smooth} affirmatively answer conjectures of Drinfeld \cite{drinfeld-lectures}.

For $n \in \mathbb{N}$, let $\BT_n$ denote the $\Z$-stack of $n$-truncated Barsotti--Tate groups, which is an fpqc algebraic stack locally of finite type over $\Z$ \cite[Proposition 1.8]{Wed01}. 
Theorem \ref{thm:smooth} is the $n$-smooth analogue of Grothendieck's smoothness result, proved in Illusie's article \cite{Ill85}, which states that for all $n \geq 1$, $\BT_n \to \Spec \Z$ is smooth and the truncation morphism $\BT_{n+1} \to \BT_n$ induced by $G \mapsto G[p^n]$ is smooth.

\begin{remark}
It is natural to ask how the equivalence of Theorem \ref{thm:equiv} compares to the classification results of \cite{ALB23} and \cite{Mon24} in the case when the base $R$ is quasi-syntomic. We hope to address this question in future work.
\end{remark}

Let us give some indications about the proofs of Theorems \ref{thm:equiv} and \ref{thm:smooth}.  First, every $n$-smooth group scheme $G$ is an iterated extension of $1$-smooth group schemes, suggesting an inductive approach to the classification result.  We begin by establishing that $\Hom(G^\vee, W_n)$ is an iterated extension of the restricted Lie algebra of $G$ (Corollary \ref{corollary: exact into Wn}). The main step is Lemma \ref{lemma: zero coboundary}, which uses Grothendieck's formula for $\tau_{\leq 1} R\Homs(-,\mathbb{G}_a)$ to deduce that certain $\Ext^1$ classes are zero.  To complete the proof of full faithfulness, we carefully compare the extension structure of $G$ and $\Hom(G^\vee, W_n)$, using the snake lemma to conclude that the unit of a certain adjunction is an isomorphism.

To show essential surjectivity of the Dieudonn\'e module functor, we appeal to Cartier's classification of formal Lie groups by $\infty$-cosmooth modules, which we review in  §\ref{subsection: Cartier}.  First, we show that every $n$-cosmooth module $\ol{M}$ can be Zariski locally lifted to an $\infty$-cosmooth module $M$ (Proposition \ref{proposition: lift n-cosmooth module}).  We then obtain via Cartier theory a formal Lie group $H$ with Cartier module $M$, and $G := H[F^n]$ is then an $n$-smooth group scheme with $\Hom(G^\vee, W_n) \cong \ol{M}$.  

Theorem \ref{thm:smooth} then follows from Theorem \ref{thm:equiv} and Proposition \ref{prop:n-cosmooth local}, which describes a local presentation for $n$-cosmooth modules. 

The structure of this article is as follows.  In Section \ref{section: preliminaries}, we state our conventions and recall some preliminaries on $n$-smooth group schemes, formal Lie groups, and the co-Lie complex.  In Section \ref{section: fully faithful} we set up a general adjunction (Lemma \ref{lemma: adjunction}), establish canonical short exact sequences of Dieudonn\'e modules (Corollary \ref{corollary: exact into Wn}), and use these to prove full faithfulness of the Dieudonn\'e module functor on $n$-smooth group schemes (Theorem \ref{theorem: isomorphism of unit}).  In Section \ref{section: essentially surjective} we prove some foundational results on $n$-cosmooth modules and establish the essential surjectivity of the Dieudonn\'e module functor (Theorem \ref{theorem: essentially surjective}), completing the proof of Theorem \ref{thm:equiv}. In Section \ref{section: base change} we establish compatibility of the Dieudonn\'e functor (\ref{eqn:equiv}) with base change, and make some explicit computations of Dieudonn\'e modules over an arbitrary base. In Section \ref{section: smoothness}, we prove Theorem \ref{thm:smooth}. 

\subsection*{Acknowledgements}
The authors would like to thank
Vladimir Drinfeld, 
Matt Emerton, 
Luc Illusie,
Keerthi Madapusi,
Akhil Mathew,
and
Shubhodip Mondal
for helpful conversations and comments.
Thanks also to the anonymous referee for their comments.
Special thanks to Vladimir Drinfeld for useful comments on an earlier draft of this paper and to Matt Emerton for his guidance and encouragement.

C.K.\ was supported by the National Science Foundation Graduate Research Fellowship under Grant No.\ 2140001. J.M.\ was supported by the National Science Foundation under Award No. 2503534. Any opinions, findings, and conclusions or recommendations expressed in this material are those of the authors and do not necessarily reflect the views of the National Science Foundation.

\section{Preliminaries} \label{section: preliminaries}

\subsection{Notation and conventions}

All group schemes in this article are assumed commutative.

Throughout, $p$ is a fixed prime number and $R$ is a fixed $\F_p$-algebra. If $X/\Spec R$ is a scheme, then the \emph{Frobenius twist} $X^{(p)}$ is defined to be the pullback of $X$ along the absolute Frobenius morphism $\Spec R \to \Spec R$. The relative Frobenius morphism $F_X: X \to X^{(p)}$ is the $R$-linear morphism defined by the diagram
\[\begin{tikzcd}
	X \\
	& {X^{(p)}} & X \\
	& {\Spec R} & {\Spec R}
	\arrow["F_X", from=1-1, to=2-2]
	\arrow[curve={height=-12pt}, from=1-1, to=2-3]
	\arrow[curve={height=12pt}, from=1-1, to=3-2]
	\arrow[from=2-2, to=2-3]
	\arrow[from=2-2, to=3-2]
	\arrow[from=2-3, to=3-3]
	\arrow[from=3-2, to=3-3]
\end{tikzcd}\]
where $X \to X$ is the absolute Frobenius morphism.  We will often write $F: X \to X^{(p)}$ for the relative Frobenius morphism $F_X$. 
As a functor on $R$-algebras, the Frobenius twist is given by $X^{(p)}(S) = X(S_{F})$, where $S_F$ is the $R$-algebra $R \to S \overset{Frob_S}{\to} S$. The relative Frobenius $F: X \to X^{(p)}$ is the natural transformation $(Frob_S)_*: X(S) \to X(S_F)$. The same formula makes sense for any functor from $R$-algebras to sets.

For $n \geq 1$, $F^n: X \to X^{(p^n)}$ denotes the composition of relative Frobenius morphisms $X  \to X^{(p)} \to \cdots \to X^{(p^n)}$.
If $G$ is a group scheme over $\Spec R$, then $F: G \to G^{(p)}$ is a morphism of groups, which is functorial in $G$ (\cite[Expose VII$_A$, 4.2]{SGA3}).

Let $(W_n)_R$ be the ring scheme over $\Spec R$ of length $n$ Witt vectors, and $W_R = \varprojlim_n (W_n)_R$ be the ring scheme of Witt vectors. Let $\sigma: W_R \to W_R$ be the canonical Frobenius lift. Since $W_R$ is obtained via base change from $W_{\F_p}$, we can identify $W_R$ and $W_R^{(p)}$, and under this identification, $\sigma$ is identified with the relative Frobenius $F_{W_R}$. As $R$ is a fixed $\F_p$-algebra, we will omit subscripts in the sequel and write $W_n$ and $W$ in place of $(W_n)_R$ and $W_R$, respectively.

The Frobenius lift $\sigma:W(R) \to W(R)$ extends to a morphism $\sigma: D_R \to D_R$ which sends $V \mapsto V$ and $F \mapsto F$. If $M$ is a left $D_R$-module, then $M|_{\sigma^i}$ denotes the restriction of scalars of $M$ along $\sigma^i: D_R \to D_R$.

\subsection{Cartier duality and Verschiebung}

If $G /\Spec R$ is a finite locally free commutative group scheme, its relative Cartier dual, denoted $G^\vee$, is the group scheme $\ul{\Hom}(G, \mathbb{G}_m)$.  Concretely, the Hopf algebra of functions of $G^\vee$ is the $R$-linear dual of $\OO(G)$. Cartier duality induces an involutive anti-equivalence from the category of finite locally free $R$-group schemes to itself.

More generally, the functor $\Homs(-,\mathbb{G}_m)$ is fully faithful on the category of affine commutative group schemes $G/\Spec R$ whose coordinate ring is flat Mittag-Leffler \cite[Theorem A]{AM25}. The class of flat Mittag-Leffler modules (whose definition we do not require) contains all projective modules, and countably generated flat Mittag-Leffler modules are projective. The essential image of this functor consists of group ind-finite ind-schemes whose coordinate pro-ring is pro-projective, and the inverse is given by $\Homs(-,\mathbb{G}_m)$. 

This setup applies to formal Lie groups. A formal Lie group is of the form $\Spf B$ where $B$ is a topological Hopf $R$-algebra whose underlying $R$-module is locally of the form $\prod_I R$ with the product topology.
The topological linear dual $B^\vee$ is then a Hopf $R$-algebra whose underlying $R$-module is locally free \cite[Corollary 3.1.2]{AM25}, and we have isomorphisms 
\[ \Homs(\Spf B,\Gm) \cong \Spec B^\vee, \qquad \Homs(\Spec B^\vee, \Gm) \cong \Spf B\]
\cite[Theorem 3.1.5]{AM25}.
In particular, Cartier duality is fully faithful on formal Lie groups. 

For all flat group schemes $G/\Spec R$, there exists a Verschiebung homomorphism $V_G: G^{(p)} \to G$ (\cite[Expose VII$_A$, 4.3]{SGA3}); if the group $G$ is understood, we write $V$ for $V_G$.  The homomorphism $V_G$ is functorial in $G$, and we have $V_G \circ F_G = [p]: G \to G$ and $F_G \circ V_G = [p]: G^{(p)} \to G^{(p)}$. Moreover, when $G$ is finite, $V_G$ agrees with the Cartier dual of the Frobenius morphism $F_{G^\vee}: G^\vee \to (G^\vee)^{(p)}$. 
Furthermore, we can concretely identify the Verschiebung operator on Witt vectors with the usual shift operator:

\begin{lemma}[\cite{Dem72} III.3 Proposition, p. 57]
\label{lemma: Witt V is Verschiebung}
Under the identification $(W_n)^{(p)} \cong W_n$, $V_{W_n}$ is identified with the shift homomorphism $(x_1, \dots, x_n) \mapsto (0, x_1, \dots, x_{n-1})$.
\end{lemma}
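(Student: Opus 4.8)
The plan is to identify $V_{W_n}$ with the shift homomorphism $s\colon W_n \to W_n$, $(x_1,\dots,x_n)\mapsto(0,x_1,\dots,x_{n-1})$, by playing the relation $V_G\circ F_G=[p]$ against the fact that the relative Frobenius of $W_n$ is an epimorphism of fppf sheaves. In other words, rather than constructing $V_{W_n}$ from scratch, I would pin it down by a universal property modulo cancellation.

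First I would record the structural input on Frobenius. Under the identification $(W_n)^{(p)}\cong W_n$, the relative Frobenius $F_{W_n}$ is the Witt Frobenius $\sigma$, which over an $\Fp$-algebra is the coordinatewise $p$-power map $(x_1,\dots,x_n)\mapsto(x_1^p,\dots,x_n^p)$. Viewed on the underlying scheme $W_n\cong\mathbb{A}^n_R$, this is just the relative Frobenius of affine space, hence finite locally free of degree $p^n$; in particular it is faithfully flat, and therefore an epimorphism in the category of fppf abelian sheaves. This epimorphism property is the decisive point.

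Next I would verify the two identities $s\circ F_{W_n}=[p]$ and $V_{W_n}\circ F_{W_n}=[p]$. The second is exactly the defining property of Verschiebung recalled in the excerpt. For the first I would invoke the standard Witt-vector relations over a ring of characteristic $p$: the universal relation $\mathsf{F}\mathsf{V}=p$ together with the coordinatewise description of $\sigma$ gives $\mathsf{V}\mathsf{F}=\mathsf{F}\mathsf{V}=[p]$ on $W_n$. Concretely, both composites send $(x_1,\dots,x_n)$ to $(0,x_1^p,\dots,x_{n-1}^p)=p\cdot(x_1,\dots,x_n)$, so $s\circ F_{W_n}=[p]$. Combining the two identities yields $V_{W_n}\circ F_{W_n}=s\circ F_{W_n}$, and cancelling the epimorphism $F_{W_n}$ on the right gives $V_{W_n}=s$, which is the assertion.

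The only genuinely delicate points are (i) that $F_{W_n}$ is an epimorphism, for which faithful flatness of the relative Frobenius of affine space is the key, and (ii) that $s\circ F_{W_n}=[p]$ holds over an \emph{arbitrary} $\Fp$-algebra rather than merely a reduced or perfect one; this is where one must use the explicit coordinatewise form of the Witt Frobenius in characteristic $p$, since the usual ghost-component verification of $\mathsf{V}\mathsf{F}=p$ is unavailable when $p$ is a zero divisor. An alternative, more laborious route would construct $V_{W_n}$ directly from the $S_p$-invariant $p$-fold comultiplication and evaluate it on Witt components, but the cancellation argument above sidesteps that computation entirely.
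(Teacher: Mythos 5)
Your proof is correct. Note first that the paper does not actually prove this lemma at all---it is imported by citation from \cite{Dem72}---so the only real question is whether your argument stands on its own, and it does; it is the standard cancellation argument. All three inputs check out: (a) under the identification $(W_n)^{(p)}\cong W_n$, the relative Frobenius $F_{W_n}$ is the coordinatewise $p$-power map on $W_n\cong\mathbb{A}^n_R$, which is finite locally free of degree $p^n$ (a basis is given by the monomials with exponents $<p$), hence faithfully flat, hence an epimorphism of fppf sheaves; (b) $V_{W_n}\circ F_{W_n}=[p]$ is the general SGA3 property of Verschiebung that the paper records in \S 2.2; (c) $s\circ F_{W_n}=[p]$, i.e.\ $p\cdot(x_1,\dots,x_n)=(0,x_1^p,\dots,x_{n-1}^p)$ in $W_n(S)$ for every $R$-algebra $S$, follows as you say from the universal Witt identity ``Frobenius $\circ$ shift $=p$'' combined with the coordinatewise description of the Witt Frobenius in characteristic $p$ (which is exactly the paper's identification $F_{W_n}=\sigma$, so you may simply quote it). Right-cancelling the epimorphism $F_{W_n}$ then forces $V_{W_n}=s$ as morphisms of fppf sheaves, hence of schemes, by Yoneda. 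Two small additional remarks. First, your argument never needs to know in advance that the shift $s$ is additive: cancellation already gives $s=V_{W_n}$ as maps of sheaves of \emph{sets}, and additivity of $s$ then comes for free. Second, regarding your worry (ii) about non-reduced $R$: since $V_{W_n}$, $F_{W_n}$, $s$, and $[p]$ are all pulled back from $\F_p$ and Verschiebung commutes with base change, the identity in (c) is an identity of morphisms of $\F_p$-schemes, so it suffices to verify it on the universal point, i.e.\ in $W_n$ of a polynomial ring over $\F_p$; that ring is reduced and embeds into its perfection, so the classical verification over perfect rings already suffices. This gives an alternative to invoking the coordinatewise form of the Witt Frobenius over arbitrary (possibly non-reduced) $\F_p$-algebras, though your route is equally valid.
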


\subsection{$n$-smooth group schemes} \label{subsection: n-smooth}

Recall the definition of $n$-smooth group schemes from Definition \ref{definition: n-smooth group}: an affine $R$-group scheme $G$ is $n$-smooth if its ring of functions is locally isomorphic to $R[x_1,\ldots, x_r]/(x_1^{p^n},\ldots, x_r^{p^n})$ where the identity section corresponds to $x_1 = \cdots = x_r = 0$. The following proposition makes precise the idea that $n$-smooth group schemes are ``Frobenius analogues" of $n$-truncated Barsotti--Tate groups (cf.\ characterizations of $\BT_n$ groups in \cite[Remark 1.3]{Ill85}).

\begin{lemma}[\cite{Gro74}, Chap. VI, Proposition 2.1]\label{lemma: exact sequences for n-smooth}
    Let $n$ be a positive integer and $G/\Spec R$ be a group scheme. Then the following are equivalent:
    \begin{enumerate}[(a)]
        \item $G$ is $n$-smooth;
        \item $G$ is finite locally free, $F_G^n = 0$, and the sequence 
        \begin{equation}\label{eq: F exact sequences}
        \begin{tikzcd}
	       G & {G^{(p^i)}} & {G^{(p^n)}}
	       \arrow["{F^i}", from=1-1, to=1-2]
	       \arrow["{F^{n-i}}", from=1-2, to=1-3]
        \end{tikzcd}\end{equation}
        is exact\footnote{A sequence of affine $R$-group schemes is said to be exact if the corresponding sequence of fppf sheaves over $\Spec R$ is exact.} for all $i \in \{1,\ldots, n-1\}$;
        \item $G$ is finite locally free, $F^nG = 0$, and the sequence \eqref{eq: F exact sequences} is exact for some $i \in \{1,\ldots, n-1\}$. 
        \end{enumerate}
\end{lemma}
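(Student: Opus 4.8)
The plan is to prove the cycle $(a)\Rightarrow(b)\Rightarrow(c)\Rightarrow(a)$. The implication $(b)\Rightarrow(c)$ is immediate, since $(b)$ asserts exactness for all $i$ and $(c)$ only for some $i$. The implication $(a)\Rightarrow(b)$ is a direct computation with the presentation of Definition~\ref{definition: n-smooth group}. The substance is $(c)\Rightarrow(a)$, which I would reduce to the case of a field by a lifting argument and then settle by a dimension count.

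For $(a)\Rightarrow(b)$, I work Zariski-locally, where $\OO(G)=R[x_1,\dots,x_r]/(x_1^{p^n},\dots,x_r^{p^n})$; this is finite free of rank $p^{nr}$, so $G$ is finite locally free. Writing $x_\ell^{(p^j)}$ for the coordinates on $G^{(p^j)}$, the relative Frobenius $F^j$ is dual to $x_\ell^{(p^j)}\mapsto x_\ell^{p^j}$. In particular $F^n$ is dual to $x_\ell^{(p^n)}\mapsto x_\ell^{p^n}=0$, so $F^nG=0$. The scheme-theoretic image of $F^i$ has coordinate ring the subalgebra of $\OO(G)$ generated by the $x_\ell^{p^i}$, namely $R[x_\ell^{p^i}]/(x_\ell^{p^n})\cong R[u_\ell]/(u_\ell^{p^{n-i}})$, while the scheme-theoretic kernel of $F^{n-i}$ has coordinate ring $R[x_\ell^{(p^i)}]/\big((x_\ell^{(p^i)})^{p^{n-i}}\big)$. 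These two closed subgroup schemes of $G^{(p^i)}$ coincide, and since $\OO(G)$ is free of rank $p^{ir}$ over the image subalgebra, $F^i\colon G\to\im(F^i)$ is faithfully flat; hence the sequence is exact as fppf sheaves.

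For $(c)\Rightarrow(a)$, I first record that $F^nG=0$ is equivalent to $a^{p^n}=0$ for every $a$ in the augmentation ideal $I\subseteq\OO(G)$, since $(F^n)^\sharp$ sends the Frobenius twist of $a$ to $a^{p^n}$. Granting that each fiber $G_s$ again satisfies $(c)$, the field case (below) shows $G_s$ is $n$-smooth, so it is infinitesimal and $\dim_{\kappa(s)}\OO(G_s)=p^{nr}$ with $r=\dim_{\kappa(s)}\omega_{G_s}$; as $\OO(G)$ is finite locally free, $r$ is locally constant. Working near such a point $s$, I choose $x_1,\dots,x_r\in I$ lifting a basis of $\omega_{G_s}$; since $x_\ell^{p^n}=0$, there is an $R$-algebra map $\phi\colon R[x_1,\dots,x_r]/(x_\ell^{p^n})\to\OO(G)$, and as $x_\ell\in I$ it carries the origin to the identity section. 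On the fiber at $s$, $\phi_s$ is surjective (the $x_\ell$ generate the maximal ideal of the local artinian ring $\OO(G_s)$) and hence an isomorphism by the rank count; by Nakayama $\phi$ is surjective near $s$, and a surjection between finite locally free modules of equal rank $p^{nr}$ is an isomorphism, so $G$ is $n$-smooth. For the field case I may base change to a perfect field and invoke the classical structure theorem for finite connected group schemes: $\OO(G)\cong\bigotimes_{j}\kappa[x_j]/(x_j^{p^{e_j}})$ with $1\le e_j\le n$ (the bound from $F^nG=0$) and $r=\#\{j\}=\dim_\kappa\omega_G$. A direct count gives $\dim_\kappa\OO(\im F^i)=\prod_j p^{\max(e_j-i,\,0)}$ and $\dim_\kappa\OO(\ker F^{n-i})=\prod_j p^{\min(e_j,\,n-i)}$; since $\im(F^i)\subseteq\ker(F^{n-i})$ always, exactness forces equality, hence $\max(e_j-i,0)=\min(e_j,n-i)$ for every $j$. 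For $1\le i\le n-1$ and $1\le e_j\le n$ the unique solution is $e_j=n$, so $G$ is $n$-smooth over $\kappa$.

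The main obstacle is the passage to fibers in $(c)\Rightarrow(a)$. The dimension count is clean over a field precisely because every kernel and image of a homomorphism of finite group schemes over a field is automatically flat; over a general $R$ one must first know that the Frobenius kernels $G[F^j]$ and images $\im(F^j)$ are finite locally free and compatible with base change—equivalently, that the co-Lie module $\omega_G$ is locally free. This local freeness is the content supplied by Grothendieck's and Messing's analysis via the co-Lie complex, and it is exactly what guarantees that the hypotheses of $(c)$ descend to each fiber $G_s$. Once this is granted, the lifting argument above closes the proof.
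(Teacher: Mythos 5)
The paper never proves this lemma; it is quoted directly from Grothendieck's Montreal notes \cite{Gro74}, so your proposal cannot be checked against an internal argument and must stand on its own. Most of it does stand: your $(a)\Rightarrow(b)$ is correct ($\OO(G)$ is free of rank $p^{ir}$ over the subalgebra $R[x_\ell^{p^i}]\cong\OO(\ker F^{n-i})$, so $G\to\ker(F^{n-i})$ is an fppf cover and the sheaf sequence is exact), and the field case of $(c)\Rightarrow(a)$ — the structure theorem over a perfect field plus the count forcing $\max(e_j-i,0)=\min(e_j,n-i)$, hence $e_j=n$ — is also correct, as is the Nakayama lifting argument once the fibral statement is available.

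The gap is precisely the step you ``grant'': that each fiber $G_s$ inherits hypothesis $(c)$. Worse, your diagnosis of what this step needs would make the proof circular: you claim it requires knowing that $\omega_G$ is locally free and that the Frobenius kernels and images are finite locally free and compatible with base change, ``supplied by Grothendieck's and Messing's analysis via the co-Lie complex.'' But that analysis \emph{is} the content of the lemma being proven — local freeness of $\omega_G$ is a consequence of $(a)$ — so invoking it inside $(c)\Rightarrow(a)$ assumes the conclusion. In fact no such input is needed. With the paper's definition of exactness (exactness of the associated fppf sheaves, per the footnote), stability under arbitrary base change is formal: kernels are fiber products and commute with base change, and sheaf images commute with pullback because the pullback functor between fppf topoi is exact. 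Concretely: if $T$ is a $\kappa(s)$-algebra and $g\in\ker\bigl(F^{n-i}:G_s^{(p^i)}\to G_s^{(p^n)}\bigr)(T)$, regard $T$ as an $R$-algebra; exactness over $R$ yields an fppf cover $T'\to T$ and $h\in G(T')$ with $F^i(h)=g|_{T'}$; since $T'$ is again a $\kappa(s)$-algebra, $h\in G_s(T')$ and $g$ lies in the sheaf image of $F^i$ over $\kappa(s)$. (This is exactly where it matters that ``image'' means sheaf image rather than scheme-theoretic image; the latter genuinely fails to commute with base change without flatness, which is presumably what misled you.) With that paragraph inserted in place of the appeal to Grothendieck--Messing, your proof is complete and self-contained up to the classical structure theorem for finite connected group schemes over a perfect field.
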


If $G/\Spec R$ is an $n$-smooth group scheme, then it follows from the definition that the co-Lie algebra $\omega_G := e^* \Omega^1_{G/R}$ is a vector bundle over $\Spec R$, where $e: \Spec R \to G$ is the identity section.  By duality, $\Lie(G) = \Hom(G^\vee, \mathbb{G}_a)$ is a vector bundle over $R$ of the same rank.  

\begin{definition} \label{definition: rank of n-smooth}
The \emph{rank} of an $n$-smooth group scheme $G$ over $\Spec R$ is the rank of $\Lie(G)$, a locally constant function on $\Spec R$. 
\end{definition}

\begin{definition}\label{definition: cosmooth group}
A group scheme $G/\Spec R$ is \emph{$n$-cosmooth} if it is finite locally free and its Cartier dual $G^\vee$ is $n$-smooth. 
\end{definition}

\begin{example}[Group schemes of order $p$]
    Over an algebraically closed field of characteristic $p$, the group schemes of order $p$ are $\mu_p$, $\Z/p\Z = \mu_p^\vee$, and $\alpha_p$ \cite[Lemma 1]{TO70}.
    \begin{itemize}
        \item $\mu_p$ is 1-smooth but not 1-cosmooth.
        \item $\Z/p\Z$ is 1-cosmooth but not 1-smooth.
        \item $\alpha_p$ is 1-smooth and 1-cosmooth.
    \end{itemize}
\end{example}

By definition, the \textit{rank} of an $n$-cosmooth group scheme $G$ is the rank of its Cartier dual $G^\vee$ in the sense of Definition \ref{definition: rank of n-smooth}.  Cartier duality induces an anti-equivalence of categories between $n$-smooth and $n$-cosmooth group schemes over $\Spec R$.  Since the equivalence in Theorem \ref{thm:equiv} factors through Cartier duality, we will work primarily with $n$-cosmooth group schemes.

For $r \geq 1$, let $\Sm_n^r/\Fp$ denote the moduli prestack of commutative $n$-smooth group schemes of rank $r$.

\begin{lemma} \label{lemma: finite type algebraic stack}
$\Sm_n^r$ is an fpqc algebraic stack of finite type over $\Fp$.
\end{lemma}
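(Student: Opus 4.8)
The plan is to exhibit $\Sm_n^r$ as a quotient stack $[\widetilde{\Sm}_n^r/\Gamma]$, where $\widetilde{\Sm}_n^r$ is an affine finite-type scheme of rigidified objects and $\Gamma$ is a smooth affine automorphism group scheme, and then invoke the standard fact that such quotients are algebraic of finite type.

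First I would rigidify. Fix the augmented $\F_p$-algebra $A_0 = \F_p[x_1,\dots,x_r]/(x_1^{p^n},\dots,x_r^{p^n})$ with augmentation ideal $\mathfrak{m}_0 = (x_1,\dots,x_r)$, and let $\widetilde{\Sm}_n^r$ send an $\F_p$-algebra $R$ to the set of commutative group-scheme (Hopf-algebra) structures on $A_0 \otimes_{\F_p} R$ compatible with the standard augmentation. Such a structure is determined by the finitely many structure constants of the comultiplication $\Delta(x_i) = \sum_{\alpha,\beta} c^i_{\alpha\beta}\, x^\alpha \otimes x^\beta$, and the coassociativity, counit, and antipode axioms are polynomial conditions in the $c^i_{\alpha\beta}$ (in characteristic $p$ one checks that $\Delta(x_i)^{p^n}=0$ is automatic once the counit condition holds, so $\Delta$ is a well-defined algebra map). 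Hence $\widetilde{\Sm}_n^r$ is a closed subscheme of an affine space over $\F_p$, in particular affine of finite type. Crucially, every $R$-point of $\widetilde{\Sm}_n^r$ produces a group scheme whose underlying pointed $R$-scheme is \emph{literally} $\Spec(A_0 \otimes R)$, hence $n$-smooth of rank $r$ by Definition \ref{definition: n-smooth group}; so no further defining equations are required.

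Next I would introduce $\Gamma = \underline{\Aut}(A_0)$, whose $R$-points are the $R$-algebra automorphisms of $A_0 \otimes R$ preserving $\mathfrak{m}_0$. Such an automorphism is a tuple $(f_1,\dots,f_r)$ with $f_i \in \mathfrak{m}_0 \otimes R$; since invertibility of the linear part forces invertibility of the whole endomorphism in this finite nilpotent setting (it induces an isomorphism on the $\mathfrak{m}_0$-adic associated graded), $\Gamma$ is the open locus $\det \neq 0$ inside the affine space of such tuples. Thus $\Gamma$ is a smooth affine group scheme of finite type over $\F_p$, acting on $\widetilde{\Sm}_n^r$ by transport of structure. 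I then claim $\Sm_n^r \cong [\widetilde{\Sm}_n^r/\Gamma]$: for a family $G/S$, the fiber $\widetilde{\Sm}_n^r \times_{\Sm_n^r} S$ is the scheme $\underline{\mathrm{Isom}}(A_0 \otimes \OO_S, \OO(G))$ of pointed-algebra trivializations, which is a $\Gamma$-torsor, and it is surjective precisely because an $n$-smooth group scheme is Zariski-locally of the standard form.

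Finally I would assemble the pieces. The property of being $n$-smooth of rank $r$ is fpqc-local on the base: by Lemma \ref{lemma: exact sequences for n-smooth}(c) it amounts to being finite locally free with $F^nG=0$ and one Frobenius sequence \eqref{eq: F exact sequences} exact, all fpqc-local conditions, while finite locally free group schemes satisfy fpqc descent; hence $\Sm_n^r$ is a stack for the fpqc topology. The presentation $[\widetilde{\Sm}_n^r/\Gamma]$ then displays $\Sm_n^r$ as the quotient of an affine finite-type $\F_p$-scheme by a smooth affine finite-type group scheme: the atlas $\widetilde{\Sm}_n^r \to \Sm_n^r$ is smooth and surjective (being a $\Gamma$-torsor with $\Gamma$ smooth) and the diagonal is representable and affine, so $\Sm_n^r$ is an algebraic stack of finite type over $\F_p$. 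The main obstacle is the identification $\Sm_n^r \cong [\widetilde{\Sm}_n^r/\Gamma]$: one must verify that the torsor of trivializations is representable and surjective and that fppf-descent along it recovers exactly the $n$-smooth group schemes of rank $r$. This is where the Zariski-local normal form of Definition \ref{definition: n-smooth group} and the fpqc-locality of $n$-smoothness are both essential, since they guarantee that the fppf-local triviality built into the quotient stack coincides with the Zariski-local triviality demanded by the definition.
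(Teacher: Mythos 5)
Your proposal is correct in its main construction but takes a genuinely different route from the paper. The paper embeds $\Sm_n^r$ into the stack $\mathcal{X}_n^r$ of \emph{all} finite locally free group schemes of order $p^{nr}$ --- which is itself presented by rigidifying only the underlying module of the Hopf algebra, i.e.\ by a $\mathrm{GL}_{p^{nr}}$-torsor --- and then shows that $\Sm_n^r$ is a \emph{locally closed} substack of $\mathcal{X}_n^r$, citing Drinfeld's pointwise criterion \cite[Lemma 2.2.3]{Dri24} together with upper semicontinuity of the rank of the Lie algebra; the fpqc-stack, algebraicity, and finite-type properties are then all inherited from $\mathcal{X}_n^r$. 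You instead rigidify the entire augmented algebra, which has the pleasant consequence that $\widetilde{\Sm}_n^r$ parametrizes \emph{only} $n$-smooth groups (no further defining equations), and you get the explicit presentation $[\widetilde{\Sm}_n^r/\underline{\Aut}(A_0)]$. Your route is more self-contained --- it replaces the citation to \cite{Dri24} by the paper's own Lemma \ref{lemma: exact sequences for n-smooth} --- and produces a concrete smooth affine atlas; the paper's route is shorter granted the cited input. Both approaches must confront the same essential point: $n$-smoothness demands a \emph{Zariski}-local normal form, while a torsor or quotient-stack presentation only produces groups with an fppf-local (or \'etale-local) normal form, so some descent statement for $n$-smoothness is unavoidable.

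This is where your argument has a gap, though a fixable one. You assert that the conditions of Lemma \ref{lemma: exact sequences for n-smooth}(c) are ``all fpqc-local conditions.'' For \emph{fppf} covers this is formal: a composite of fppf covers is an fppf cover, so exactness of a sequence of fppf sheaves descends along fppf base change; this suffices for the identification $\Sm_n^r \cong [\widetilde{\Sm}_n^r/\Gamma]$, since the torsor $P \to S$ is a smooth cover. But for honestly fpqc covers $S' \to S$ --- which you need for the ``fpqc stack'' part of the statement --- the same composite-cover argument only shows that a section of $\ker(F^{n-i})$ lifts to $G$ after an \emph{fpqc} cover, i.e.\ that $G \to \ker(F^{n-i})$ is an epimorphism of fpqc sheaves, not of fppf sheaves, which is what exactness of \eqref{eq: F exact sequences} asserts. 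To descend along fpqc covers you should first reformulate exactness in manifestly fpqc-local terms: when $G_{S'}$ is $n$-smooth, the kernel $\ker(F^{n-i})_{S'}$ is finite locally free and $G_{S'} \to \ker(F^{n-i})_{S'}$ is faithfully flat; both ``finite locally free'' and ``faithfully flat'' descend along $S' \to S$, and a faithfully flat, finitely presented surjection onto a finite locally free group scheme is an fppf epimorphism, so exactness holds over $S$. This is precisely the subtlety the paper's route sidesteps: once $\Sm_n^r$ is exhibited as a locally closed substack of $\mathcal{X}_n^r$, fpqc descent is automatic, because factoring through a locally closed immersion is an fpqc-local condition. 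Finally, a minor point: your coordinates parametrize a bialgebra structure, and the ``antipode axiom'' is not a condition on the $c^i_{\alpha\beta}$ alone; either adjoin the antipode's structure constants as further affine coordinates, or invoke the fact that a finite bialgebra whose augmentation ideal is nilpotent admits a unique antipode (the identity map is convolution-invertible by a finite geometric series).
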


\begin{proof}
Let $\mathcal{X}_n^r$ denote the stack of finite locally free group schemes of order $p^{nr}$.  Then $\Sm_n^r \subset \mathcal{X}_n^r$ is a locally closed substack of $\mathcal{X}_n^r$: by \cite[Lemma 2.2.3]{Dri24} and upper semicontinuity of the rank of the Lie algebra, $\Sm_n^r$ is an open substack of the closed substack of $\mathcal{X}_n^r$ where $F^n = 0$.  Since $\mathcal{X}_n^r$ is an fpqc algebraic stack of finite type over $\Fp$ (as can be seen via the presentation obtained from fixing a basis of the Hopf algebra of a group scheme of order $p^{nr}$), we obtain the lemma.
\end{proof}

We let $\Sm_n$ denote the $\Fp$-stack of $n$-smooth group schemes. It is the disjoint union of $\Sm_n^r$ over all $r \geq 1$.

\subsection{The relation to formal Lie groups}
From the definition, it follows that if $G$ is $N$-smooth and $n \leq N$, then $G[F^n]$ is $n$-smooth. Thus, $G \mapsto G[F^n]$ defines the \emph{truncation morphism} $\Sm_N \to \Sm_{n}$, making the stacks $\{\Sm_n\}_{n \geq 1}$ into a projective system. Moreover, if $G$ is a formal Lie group over $R$, then \cite[Theorem 2.1.7]{Mes72} shows that $G$ is ``$F$-divisible" as an fppf sheaf of groups, and the definitions show that $G[F^n]$ is $n$-smooth for any $n \geq 1$. Letting $\Sm_\infty$ denote the stack of commutative formal Lie groups over $\Fp$, we have the following:

\begin{lemma}[\cite{Dri24}, 2.2.10 and \cite{Mes72}, Chapter II] \label{lemma: formal Lie}
The assignment $G \mapsto \{G[F^n]\}_{n \geq 1}$ defines an equivalence 
\begin{equation*}
\Sm_\infty \simeq \varprojlim_n \Sm_n.
\end{equation*}
\end{lemma}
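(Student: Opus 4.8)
The plan is to prove the two assertions separately: the first by a direct local computation, the second by producing an explicit quasi-inverse. Throughout I write $\mathfrak m_A^{[p^n]}$ for the ideal of an augmented $R$-algebra $A$ generated by the $p^n$-th powers of the elements of the augmentation ideal $\mathfrak m_A$. For the first claim, I would work Zariski-locally on $\Spec R$, where a commutative formal Lie group $G$ of dimension $d$ has $\OO(G) \cong R[[x_1, \dots, x_d]]$ with a formal group law and augmentation ideal $\mathfrak m_G = (x_1, \dots, x_d)$. Under the identification $\OO(G^{(p^n)}) \cong R[[x_1, \dots, x_d]]$, the relative Frobenius $F^n$ acts on functions by $x_i \mapsto x_i^{p^n}$, so the kernel $G[F^n]$ — the pullback of the identity section of $G^{(p^n)}$ along $F^n$ — has coordinate ring $\OO(G)/\mathfrak m_G^{[p^n]} = R[[x]]/(x_1^{p^n}, \dots, x_d^{p^n}) = R[x_1, \dots, x_d]/(x_1^{p^n}, \dots, x_d^{p^n})$, with identity section $x_1 = \dots = x_d = 0$. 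This is exactly the presentation required by Definition \ref{definition: n-smooth group}, so $G[F^n]$ is $n$-smooth.

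For the equivalence, I would first check full faithfulness. The cofinality of the filtrations $\{\mathfrak m_G^{[p^n]}\}_n$ and $\{\mathfrak m_G^k\}_k$ gives $\OO(G) = \varprojlim_n \OO(G[F^n])$, i.e. $G = \colim_n G[F^n]$, so a homomorphism $G \to H$ of formal Lie groups is the same datum as a compatible system of homomorphisms $G[F^n] \to H$. Since $F^n$ vanishes on the $n$-smooth group $G[F^n]$ and relative Frobenius is functorial, any $f \colon G[F^n] \to H$ satisfies $F^n_H \circ f = f^{(p^n)} \circ F^n = 0$ and therefore factors uniquely through $H[F^n] = \ker F^n_H$. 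Hence $\Hom(G, H) = \varprojlim_n \Hom(G[F^n], H[F^n])$, which is precisely the Hom computed in $\varprojlim_n \Sm_n$; this shows $G \mapsto \{G[F^n]\}$ is fully faithful (on Hom-sheaves).

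For essential surjectivity, given $\{G_n\}$ in $\varprojlim_n \Sm_n$ with compatible isomorphisms $G_n \cong G_{n+1}[F^n]$, I would set $G := \colim_n G_n$ with $\OO(G) := \varprojlim_n \OO(G_n)$ and the group law inherited from the $G_n$. The computation above applies verbatim to any $(n+1)$-smooth $H$: its Frobenius kernel $H[F^n]$ has functions $\OO(H)/\mathfrak m_H^{[p^n]}$. Hence the transition maps of the system are the canonical, coordinate-independent reductions modulo Frobenius powers of the augmentation ideal, and choosing local coordinates identifies them with the evident surjections $R[x]/(x_i^{p^{n+1}}) \to R[x]/(x_i^{p^n})$. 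The number $d$ of coordinates is independent of $n$ because $\Lie(G_n) = \Lie(G_{n+1}[F^n]) = \Lie(G_{n+1})$, the Lie algebra being the tangent space at the identity.

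The main obstacle is to verify that $A := \varprojlim_n \OO(G_n)$ is Zariski-locally a power series ring, i.e. that the colimit is formally smooth. I would choose $Y_1, \dots, Y_d \in \mathfrak m_A$ lifting a basis of the locally free co-Lie module $\mathfrak m_A/\mathfrak m_A^2$, producing a map $R[[Y_1, \dots, Y_d]] \to A$. Since $A/\mathfrak m_A^{[p^n]} \cong \OO(G_n)$ and the $\mathfrak m_A^{[p^n]}$ are cofinal with the $\mathfrak m_A^k$, it suffices to show each induced map $R[Y]/(Y_i^{p^n}) \to \OO(G_n)$ is an isomorphism, which follows from Nakayama's lemma (the $Y_i$ span $\mathfrak m_{G_n}/\mathfrak m_{G_n}^2$, hence generate) together with a comparison of ranks, both equal to $p^{nd}$. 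This exhibits $G$ as a formal Lie group with $G[F^n] = \Spec A/\mathfrak m_A^{[p^n]} \cong G_n$ compatibly, completing essential surjectivity. I expect this control of the inverse limit of truncated polynomial algebras — together with the cofinality of the Frobenius-power filtration that makes the two constructions mutually inverse — to be the only genuinely technical point; Part 1 and full faithfulness are essentially formal.
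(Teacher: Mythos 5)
Your proposal cannot be compared line-by-line with the paper's argument, because the paper gives none: Lemma \ref{lemma: formal Lie} is quoted as a known result, with references to \cite{Dri24} and \cite{Mes72}, and is used as a black box. What you have written is a self-contained proof along the standard lines of those references, and its main steps are correct: (i) the coordinate-free identification $\OO(H[F^n]) = \OO(H)/\mathfrak{m}_H^{[p^n]}$, which in local coordinates on a formal Lie group gives $R[x_1,\dots,x_d]/(x_1^{p^n},\dots,x_d^{p^n})$, hence $n$-smoothness of $G[F^n]$; (ii) cofinality of $\{\mathfrak{m}^{[p^n]}\}$ with $\{\mathfrak{m}^k\}$, giving $G = \colim_n G[F^n]$, together with functoriality of relative Frobenius forcing every homomorphism $G[F^n] \to H$ to factor uniquely through $H[F^n]$ --- this yields full faithfulness (indeed on all homomorphisms, not just isomorphisms, which is more than the stack equivalence needs); (iii) for essential surjectivity, reconstruction of $A = \varprojlim_n \OO(G_n)$ as (Zariski-locally) a power series ring, by lifting cotangent coordinates and comparing ranks, with surjectivity of $R[Y]/(Y_i^{p^n}) \to \OO(G_n)$ coming from nilpotence of the augmentation ideal and injectivity from equality of ranks $p^{nd}$ of finite locally free modules; the observation that $\mathfrak{m}^{[p^n]} \subseteq \mathfrak{m}^2$ identifies all the cotangent spaces and pins down the common dimension $d$. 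Relative to the paper, what your approach buys is precisely a proof in place of a citation, using only Definition \ref{definition: n-smooth group} and standard facts.

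Two places should be rephrased, though both are repairable inside your own argument and neither is fatal. First, you assert $A/\mathfrak{m}_A^{[p^n]} \cong \OO(G_n)$ before it is available: a priori one only knows $\mathfrak{m}_A^{[p^n]} \subseteq \ker(A \to \OO(G_n))$, and equality is exactly what becomes clear \emph{after} $A$ is known to be a power series ring. The assertion is also unnecessary: once each induced map $R[Y]/(Y_i^{p^n}) \to \OO(G_n)$ is an isomorphism, pass to the inverse limit and use cofinality of the ideals $(Y_1^{p^n},\dots,Y_d^{p^n})$ with the powers of $(Y_1,\dots,Y_d)$ \emph{inside $R[[Y]]$} (not inside $A$) to conclude $R[[Y]] = \varprojlim_n R[Y]/(Y_i^{p^n}) \to \varprojlim_n \OO(G_n) = A$ is an isomorphism. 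Second, the ``locally free co-Lie module $\mathfrak{m}_A/\mathfrak{m}_A^2$'' is not a priori well behaved, for the same inverse-limit reason: the kernel of $\mathfrak{m}_A \twoheadrightarrow \mathfrak{m}_{G_1}$ lies in $\varprojlim_m \mathfrak{m}_{G_m}^2$, which is not obviously contained in $\mathfrak{m}_A^2$, so one should not quote local freeness of $\mathfrak{m}_A/\mathfrak{m}_A^2$. Instead, lift a basis of $\mathfrak{m}_{G_1}/\mathfrak{m}_{G_1}^2$ along the surjection $\mathfrak{m}_A \twoheadrightarrow \mathfrak{m}_{G_1}$ (surjective by Mittag--Leffler, as all transition maps are surjective); since the transition maps induce isomorphisms $\mathfrak{m}_{G_{n+1}}/\mathfrak{m}_{G_{n+1}}^2 \to \mathfrak{m}_{G_n}/\mathfrak{m}_{G_n}^2$ (their kernels $\mathfrak{m}^{[p^n]}$ lie in $\mathfrak{m}^2$), the chosen $Y_i$ still restrict to a basis of every $\mathfrak{m}_{G_n}/\mathfrak{m}_{G_n}^2$, which is all that your Nakayama-plus-rank argument actually uses.
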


So we think of $n$-smooth group schemes as truncations of formal Lie groups. In fact, we will show in this article that every $n$-smooth group scheme is, Zariski locally on the base, obtained as the kernel of $F^n$ on a formal Lie group (see Remark \ref{remark: n-smooth locally as kernel of Fn}).

\subsection{The co-Lie complex}

We recall the co-Lie complex of a group scheme, following \cite[§2.1]{Ill85}. 

\begin{definition}
Let $S$ be a scheme and $G$ be a finite locally free $S$-group scheme.  The \emph{co-Lie complex} of $G$ is the complex
\begin{equation*}
\ell_G := Le^* L_{G/S} \in D(S)
\end{equation*}
where $L_{G/S}$ is the cotangent complex of $G$ and $e: S \to G$ is the identity section. 
\end{definition}

The co-Lie complex $\ell_G$ is a perfect complex of amplitude in $[-1, 0]$.  The following result is well-known \cite[Remark 2.1.4]{Mes72}.

\begin{lemma} \label{lemma: n-smooth coLie}
Let $R$ be an $\Fp$-algebra. If $G/\Spec R$ is $n$-smooth, then $H^{-1}(\ell_G)$ and $H^0(\ell_G)$ are finite locally free $R$-modules of rank equal to the rank of $G$. 
\end{lemma}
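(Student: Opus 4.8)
The plan is to reduce to the explicit local model supplied by Definition \ref{definition: n-smooth group} and to compute the cotangent complex by hand, the whole point being a differential that vanishes in characteristic $p$. Since $H^{-1}(\ell_G)$, $H^0(\ell_G)$, and the property of being finite locally free of a prescribed rank can all be checked Zariski-locally on $\Spec R$, I would first pass to an open over which $G \cong \Spec A$ with $A = R[x_1,\dots,x_r]/(x_1^{p^n},\dots,x_r^{p^n})$ and $e$ given by $x_1 = \dots = x_r = 0$. Writing $P = R[x_1,\dots,x_r]$ and $I = (x_1^{p^n},\dots,x_r^{p^n})$, the sequence $x_1^{p^n},\dots,x_r^{p^n}$ is regular in $P$, so $A/R$ is a complete intersection and its cotangent complex is computed by the conormal presentation
\[
L_{A/R} \simeq \big[\, I/I^2 \xrightarrow{\,d\,} \Omega^1_{P/R}\otimes_P A \,\big],
\]
placed in degrees $-1$ and $0$, with $I/I^2$ free over $A$ on the classes of the $x_j^{p^n}$ and $\Omega^1_{P/R}\otimes_P A$ free over $A$ on $dx_1,\dots,dx_r$. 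Both terms are thus free $A$-modules of rank $r$.

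The decisive observation is that $d = 0$: it sends the class of $x_j^{p^n}$ to $d(x_j^{p^n}) = p^n x_j^{p^n-1}\,dx_j$, whose coefficient $p^n$ vanishes in the $\F_p$-algebra $R$. This is exactly where characteristic $p$ is used; over a base in which $p$ is invertible the same computation would instead make $d$ injective. Consequently $L_{A/R}$ is represented by a complex of free $A$-modules with zero differential.

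It then remains to apply $Le^* = (-)\otimes^L_A R$ along the augmentation $A \to R$, $x_i \mapsto 0$. As $L_{A/R}$ is already a complex of flat $A$-modules, the derived pullback coincides with the ordinary one, giving
\[
\ell_G \simeq \big[\, (I/I^2)\otimes_A R \xrightarrow{\,0\,} (\Omega^1_{P/R}\otimes_P A)\otimes_A R \,\big] \simeq \big[\, R^{r} \xrightarrow{\,0\,} R^{r}\,\big].
\]
Reading off cohomology yields $H^{-1}(\ell_G) \cong R^r$ and $H^0(\ell_G) \cong R^r$, both finite free of rank $r$. Finally $H^0(\ell_G) = e^*\Omega^1_{A/R} = \omega_G$, whose $R$-linear dual is $\Lie(G)$, so $r$ is precisely the rank of $G$ in the sense of Definition \ref{definition: rank of n-smooth}, and both cohomology modules have rank equal to the rank of $G$.

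The argument is genuinely short, and there is no serious obstacle: the mathematical content is concentrated in the identity $d(x_j^{p^n}) = 0$. The only points requiring care are verifying that the conormal presentation applies (i.e.\ that the $x_j^{p^n}$ form a regular sequence, which is immediate) and keeping the cohomological degree conventions for $\ell_G$ consistent, so that one correctly identifies the $H^{-1}$ and $H^0$ terms. Once the differential is seen to vanish, the conclusion is automatic.
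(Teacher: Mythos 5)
Your proof is correct and is essentially the paper's argument: the paper's proof simply says the statement ``follows immediately from the definitions'' via the local model $\Spec R[x_1,\dots,x_r]/(x_1^{p^n},\dots,x_r^{p^n})$, and your computation (conormal presentation for the regular sequence $x_j^{p^n}$, vanishing of the differential since $d(x_j^{p^n}) = p^n x_j^{p^n-1}dx_j = 0$ in characteristic $p$, then termwise pullback along $e$) is precisely the calculation being left implicit. Your version just spells out the details the authors omit.
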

\begin{proof}
This follows immediately from the definitions, as every $n$-smooth group scheme of rank $r$ is locally isomorphic to $\Spec R[x_1, \dots, x_r]/(x_1^{p^n}, \dots, x_r^{p^n})$ where the identity section corresponds to $x_1 = \dots = x_r = 0$.
\end{proof}

The following result of Grothendieck will be crucial for cohomological calculations with $n$-smooth group schemes. 

\begin{lemma}[Grothendieck, \cite{MM74} §14]\label{lemma: grothendieck hom formula}
    If $S$ is a scheme, $G$ is a finite locally free commutative $S$-group scheme, and $M$ is a quasi-coherent $\OO_S$-module, then 
    \[
        R\Homs_{\OO_S}(\ell_G,M) \simeq \tau_{\leq 1} R\Homs(G^\vee, M)
    \]
    where the latter $R\Homs$ is in the category of fppf sheaves of abelian groups over $S$.
\end{lemma}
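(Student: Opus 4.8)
The plan is to reduce the statement to a computation with the Koszul-type resolution coming from the co-Lie complex, exploiting the fact that $\ell_G$ computes the low-degree part of $R\Homs(G^\vee, -)$ via the relationship between the cotangent complex and deformations/extensions. First I would recall the general principle that for an affine commutative group scheme $G^\vee$ with Hopf algebra $A = \OO(G^\vee)$, the fppf sheaf $\Homs(G^\vee, M)$ in degree $0$ consists of primitive elements, i.e. additive maps, and the first derived functor $\mathcal{E}xt^1$ classifies extensions of $G^\vee$ by $M$. The key identification is that since $\ell_{G} = Le^* L_{G/S}$ is the co-Lie complex and $G = (G^\vee)^\vee$, Cartier duality lets us reinterpret $R\Homs_{\OO_S}(\ell_G, M)$ in terms of the deformation theory of $G^\vee$ with coefficients in $M$.

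The main steps, in order, would be: \emph{(i)} Reduce to the affine case $S = \Spec R$ and work Zariski-locally, so that $G$ (equivalently $G^\vee$) has a free Hopf algebra and $M$ is an ordinary $R$-module; everything in sight commutes with the required localizations since $\ell_G$ is perfect. \emph{(ii)} Use the fundamental exact triangle relating the cotangent complex $L_{A/R}$ of the Hopf algebra $A = \OO(G^\vee)$ to the module of K\"ahler differentials and the identity section, pulled back along $e$, to express the two cohomology sheaves $H^0(\ell_G)$ and $H^{-1}(\ell_G)$ concretely. \emph{(iii)} On the sheaf-$\mathcal{E}xt$ side, identify $\mathcal{H}om(G^\vee, M)$ and $\mathcal{E}xt^1(G^\vee, M)$ with the appropriate (co)homology of the cobar / normalized complex computing $\Ext$ in the category of fppf abelian sheaves; the point is that these low-degree Ext groups depend only on the infinitesimal/additive structure of $G^\vee$ near the identity, which is exactly what $\ell_G$ records. \emph{(iv)} Match the two sides degree by degree: $H^0$ of $R\Homs(\ell_G, M)$ with $\mathcal{H}om(G^\vee, M)$, and $H^1$ with $\mathcal{E}xt^1(G^\vee, M)$, and check that the truncation $\tau_{\leq 1}$ on the right precisely discards the higher $\mathcal{E}xt$ that have no counterpart on the (amplitude $[-1,0]$) co-Lie side.

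The step I expect to be the main obstacle is \emph{(iii)}--\emph{(iv)}: producing a \emph{canonical} and \emph{functorial} isomorphism between the derived Hom out of $\ell_G$ and the truncated derived sheaf-Hom into $M$, rather than merely an abstract agreement of the two cohomology groups. The difficulty is that the co-Lie complex is built from the cotangent complex (a commutative-algebra invariant) while $R\Homs(G^\vee, M)$ is a group-scheme / fppf-sheaf invariant, so the comparison requires carefully tracking how the Hopf-algebra comultiplication endows $L_{A/R}$ with the extra structure that encodes the group law. Concretely, one must show that the primitive (additive) part of the deformation theory of $G^\vee$ is captured by $\ell_G$ and that this identification is compatible with the connecting maps. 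Since this is precisely the content of Grothendieck's original computation in \cite{MM74} §14, I would either invoke that reference directly or, to keep the argument self-contained, reconstruct the comparison by resolving $G^\vee$ by smooth affine group schemes (e.g. vector groups or Witt-vector groups) and computing both sides simultaneously, using that for a smooth $G^\vee$ the co-Lie complex is concentrated in degree $0$ and agrees with $\omega_{G^\vee}$, so both sides reduce to the known formula $\Homs(\omega_{G^\vee}, M) \simeq \Homs(\Lie G, M)$ and the $\tau_{\leq 1}$ truncation is then forced by dimension-shifting along the resolution.
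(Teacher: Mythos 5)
You are not missing anything hidden in the paper: the paper gives \emph{no proof} of this lemma. It is quoted as a theorem of Grothendieck with the citation \cite[\S 14]{MM74}, which is exactly your stated primary option of invoking that reference directly. On the main route, then, you and the authors coincide, and your steps (i)--(ii) and the degree-$0$ identification (primitives in $\OO(G^\vee)$ correspond to $\Hom_{\OO_S}(\omega_G, M)$) are consistent with how the result is understood.

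However, the self-contained reconstruction you sketch as a fallback has a genuine gap, and you should not present it as a workable alternative without repair. The entire content of the formula is the duality twist: the left-hand side is built from the co-Lie complex of $G$, while the right-hand side takes sheaf Homs out of the \emph{dual} group $G^\vee$; your step (iv) and closing sentence conflate the two. Concretely: (a) the asserted ``known formula'' $\Homs(\omega_{G^\vee},M)\simeq \Homs(\Lie G, M)$ is false in general, since $\omega_{G^\vee}$ and $\Lie G$ have unrelated ranks --- for $G=\mu_p$, $G^\vee = \Z/p\Z$ over $\Fp$ one has $\omega_{G^\vee}=0$ (as $\Z/p\Z$ is \'etale) while $\Lie G$ has rank $1$, and the lemma's two sides are indeed both of rank $1$ here, not $0$. (b) If you resolve $G^\vee$ by smooth affine groups (vector groups, Witt groups), the resolution terms are not finite locally free, so they admit no Cartier dual group scheme and hence no counterpart on the $\ell_G$ side; the dimension shift does not transport across the duality. (c) Even on the right-hand side alone, dimension-shifting along such a resolution requires computing the fppf sheaves $\underline{\Ext}^i(\mathbb{G}_a,\mathbb{G}_a)$ and $\underline{\Ext}^i(W_n,\mathbb{G}_a)$ over an $\Fp$-base, which are famously nonzero and delicate (this is the subject of Breen's work); this is precisely why the statement holds only after applying $\tau_{\leq 1}$ and why Grothendieck's argument recorded in \cite{MM74} is a substantial proof rather than a routine d\'evissage. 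None of this affects the paper, which, like your primary suggestion, treats the lemma as a black box.
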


\section{Full Faithfulness of the Dieudonn\'e Functor} \label{section: fully faithful}
In this section, we establish that the Dieudonn\'e functor $G \mapsto \Hom(G, W_n)$ is fully faithful on $n$-cosmooth group schemes over $R$.  Our strategy is to construct an adjoint to the Dieudonn\'e functor and to show that the unit of this adjunction is an isomorphism, using key homological properties of $n$-cosmooth group schemes.

\subsection{The Dieudonné adjunction}

Let $\AbR$ be the category of fppf sheaves of abelian groups over $\Spec R$. The Witt vectors $W$ are a left $D_R$-module in $\AbR$.
\begin{lemma}\label{lemma: VD is a two-sided ideal}
    $D_R V \subseteq V D_R.$
\end{lemma}
\begin{proof}
    $D_R$ is generated by $W(R)$, $F$, and $V$.
    The relations imply $FV = p = VF$ and $aV = V\sigma(a)$ for $a \in W(R)$.
\end{proof}
Thus, $V^nW \subseteq W$ is a subsheaf of $D_R$-modules and $W_n = W/ V^nW$ inherits a $D_R$-module structure from $W$.

\begin{definition} \label{definition: level n Dieudonne}
    The \emph{level $n$ Dieudonné functor} $\Mn: \AbR^{op} \to (D_R\modc)$ is defined by 
    \[\Mn(A) = \Hom_{\AbR}(A, W_n).\] 
\end{definition}

Define the functor 
\[ \Gn: (D_R\modc)^{op} \to \AbR\]
by $\Gn(M)(S) = \Hom_{D_R}(M, W_n(S))$ for all $R$-algebras $S$.  We will sometimes denote this as $\Gn(M) = \Hom_{D_R}(M, W_n)$.

\begin{lemma}
	$S \mapsto \Hom_{D_R}(M,W_n(S))$ is an fppf sheaf over $\Spec R$.
\end{lemma}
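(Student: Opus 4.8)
The plan is to show that the presheaf $S \mapsto \Hom_{D_R}(M, W_n(S))$ satisfies the fppf sheaf condition by reducing it to the sheaf property of the Witt vector functor $W_n$ itself.

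\medskip

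First I would observe that $W_n$ is a scheme over $\Spec R$ (indeed an affine ring scheme), so as a functor on $R$-algebras $S \mapsto W_n(S)$ it is representable and hence is an fppf sheaf; equivalently, for any faithfully flat map $S \to S'$ of $R$-algebras, the sequence
\begin{equation*}
W_n(S) \longrightarrow W_n(S') \rightrightarrows W_n(S' \otimes_S S')
\end{equation*}
is an equalizer. The key point is that this equalizer sequence is one of \emph{abelian groups}, and in fact of $D_R$-modules: the Frobenius $F$, Verschiebung $V$, and the $W(R)$-action are all defined by natural transformations of group-valued functors on $R$-algebras, so they commute with the restriction maps in the diagram above.

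\medskip

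Next I would apply the left-exact functor $\Hom_{D_R}(M, -)$ to this equalizer. Since $\Hom_{D_R}(M, -)$ is a representable (hence limit-preserving) functor on the category of $D_R$-modules, it carries equalizers to equalizers. Concretely, for faithfully flat $S \to S'$ I would check that
\begin{equation*}
\Hom_{D_R}(M, W_n(S)) \longrightarrow \Hom_{D_R}(M, W_n(S')) \rightrightarrows \Hom_{D_R}(M, W_n(S' \otimes_S S'))
\end{equation*}
is an equalizer, which follows formally once one knows the evaluation diagram for $W_n$ is an equalizer of $D_R$-modules. This verifies descent along faithfully flat morphisms; the additional requirement that the presheaf be a sheaf for the Zariski (or fpqc refinement) part of the fppf topology follows from the same equalizer property applied to a disjoint-union cover, again using that $W_n$ is representable.

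\medskip

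I do not expect any serious obstacle here, as the statement is essentially formal. The one point requiring a small amount of care is verifying that the $D_R$-module structure on $W_n(S)$ is genuinely natural in $S$ — that is, that $F$, $V$, and the action of $W(R)$ are given by morphisms of $R$-schemes — so that the equalizer sequence for $W_n$ really lives in $D_R\modc$ rather than merely in abelian groups. Once that naturality is in hand, preservation of the equalizer by the $\Hom_{D_R}(M, -)$ functor is immediate, and the sheaf property follows. This is the expected minor friction, but it is resolved by the discussion of the $D_R$-action on $W_n$ given in the conventions above.
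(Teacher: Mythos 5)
Your proof is correct and follows essentially the same route as the paper: the paper likewise notes that $S \mapsto W_n(S)$ is a sheaf of $D_R$-modules and that $\Hom_{D_R}(M,-)$ preserves limits, hence the sheaf condition. Your write-up simply makes the equalizer diagrams and the naturality of the $F$, $V$, and $W(R)$-actions explicit.
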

\begin{proof}
	$S \mapsto W_n(S)$ is a sheaf of $D_R$-modules.
	Since $\Hom_{D_R}(M,-)$ preserves limits, it preserves the sheaf condition.
\end{proof}

\begin{remark}
By writing down an explicit representing algebra, one can see that $\Gn(M)$ is an affine group scheme over $R$, which is of finite type if $M$ is finitely generated. We will not use this observation in what follows.
\end{remark}

\begin{lemma}\label{lemma: adjunction}
	$\Mn$ and $\Gn$ are adjoint functors.
\end{lemma}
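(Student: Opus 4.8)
The plan is to exhibit a bijection
\[
\Hom_{D_R}(M, \Mn(A)) \;\cong\; \Hom_{\AbR}(A, \Gn(M))
\]
natural in $M \in D_R\modc$ and $A \in \AbR$. Since both $\Mn$ and $\Gn$ are contravariant, this is precisely the assertion that they form a mutually right adjoint pair (the ``dual adjunction'' satisfied by any pair of $\Hom(-, \text{dualizing object})$ functors), and the associated unit is the evaluation map $A \to \Gn(\Mn(A))$ that will be studied later. I would construct the bijection by unwinding both Hom-sets to a common third object and observing that it is literally the same datum read in two directions.

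First I would unwind the left-hand side. An element $\phi \in \Hom_{D_R}(M, \Mn(A))$ is a $D_R$-linear map $\phi \colon M \to \Hom_{\AbR}(A, W_n)$: for each $m \in M$ a morphism of fppf sheaves $\phi(m)\colon A \to W_n$, with $\phi(dm) = d \cdot \phi(m)$ for $d \in D_R$, where the target action is postcomposition with the $D_R$-action on $W_n$. Unpacking $\phi(m)$ over each $R$-algebra $S$, this datum is equivalent to a family of maps
\[
b_S \colon M \times A(S) \to W_n(S), \qquad b_S(m, a) := \phi(m)_S(a),
\]
which is additive in $a$, compatible with restriction (i.e.\ natural) in $S$, and satisfies $b_S(dm, a) = d \cdot b_S(m, a)$ using the $D_R$-module structure on $W_n(S)$.

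Next I would observe that the right-hand side yields exactly the same datum. An element $\psi \in \Hom_{\AbR}(A, \Gn(M))$ is a morphism of fppf sheaves $A \to \Gn(M)$, that is, for each $S$ and $a \in A(S)$ a $D_R$-linear map $\psi_S(a)\colon M \to W_n(S)$, additive in $a$ and natural in $S$. Setting $b_S(m,a) := \psi_S(a)(m)$ recovers a family $\{b_S\}$ with precisely the same three properties. Thus both Hom-sets are canonically identified with the set of such families, and the correspondence $\phi \leftrightarrow \psi$ is governed by the single formula $\phi(m)_S(a) = \psi_S(a)(m)$; in particular the two assignments are visibly mutually inverse.

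The remaining verifications are formal. Additivity of $b_S(m, -)$ and $D_R$-linearity of $b_S(-, a)$ are read off directly from the two descriptions; naturality in $S$ is the sheaf-morphism condition on either side; and naturality of the bijection in $M$ (precomposition by a $D_R$-map) and in $A$ (precomposition by a sheaf map) is immediate from the symmetric formula for $b_S$. The one point requiring care --- though it is bookkeeping rather than a genuine obstacle --- is confirming that $D_R$-linearity in $m$ on the left matches $D_R$-linearity of $\psi_S(a)$ on the right. This rests entirely on the fact that the action on $\Mn(A) = \Hom_{\AbR}(A, W_n)$ is defined by postcomposition with the $D_R$-action on $W_n$, so that $(d\cdot \xi)_S(a) = d \cdot \xi_S(a)$ pointwise over $S$; with this identity in hand (Definition \ref{definition: level n Dieudonne}) the two a priori different $D_R$-actions coincide and the computation closes.
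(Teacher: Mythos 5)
Your proof is correct and is essentially the paper's argument in unwound form: the paper cites the tensor--hom adjunction for sheaves of modules over the constant sheaf of rings $D_R$ (with $Y$ the constant sheaf on $M$ and $Z = W_n$), whose middle term $\Hom_{D_R}(M \otimes A, W_n)$ is precisely your set of pairing families $b_S \colon M \times A(S) \to W_n(S)$ that are additive in $a$, natural in $S$, and $D_R$-linear in $m$. Verifying that both Hom-sets identify with this common object by hand, as you do, is a complete substitute for invoking the abstract adjunction.
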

\begin{proof}
	We wish to construct an isomorphism
	\[ \Hom_{\AbR}(A, \Gn(M)) \cong \Hom_{D_R}(M, \Mn(A))\]
	natural in $A \in \AbR$ and $M \in D_R\modc$.
	If $B$ is any sheaf of associative rings, $X$ is a sheaf of abelian groups, and $Y$ and $Z$ are sheaves of left $B$-modules, then the adjunction of the tensor product and the internal Hom functor $\Homs$ gives an isomorphism
	\[ \Hom(X, \Homs_B(Y,Z)) \cong \Hom_B(Y \otimes X, Z) \cong \Hom_B(Y, \Homs(X,Z)),\]
	natural in $X,Y,$ and $Z$. In our case, we take $B$ to be the constant sheaf on $D_R$, $Y$ to be the constant sheaf on $M$, and $Z$ to be $W_n$.
\end{proof}

\subsection{Homological properties of $n$-cosmooth group schemes}

Let $G$ be an $n$-cosmooth group scheme over $R$. The main goal of this subsection is Corollary \ref{corollary: exact into Wn}, which establishes a canonical exact sequence exhibiting $\Hom(G, W_n)$ as an extension of $\Hom(G, W_{n-j})$ by $\Hom(G, W_j)$ for all $j \in \{1, \dots, n-1\}$.  Using this, we conclude that $\Hom(G, W_n)$ is an $n$-cosmooth $D_R$-module in the sense of Definition \ref{definition: cosmooth module}.

First, it is a general fact that maps into $W_j$ factor through the cokernel of $V^j$:

\begin{lemma}\label{lemma: maps into smaller Witt}
    Let $H /\Spec R$ be a flat group scheme. If $i \geq j$, the quotient map $H \to \coker(V_H^i)$ induces an isomorphism 
    \[ \Hom(\coker(V^i_H),W_j) \overset{\sim}{\to} \Hom(H,W_j).\]
\end{lemma}
\begin{proof}
    Since $H \to \coker(V^i_H)$ is surjective, the map is injective.
    Now suppose $f: H \to W_j$ is given. By naturality of the Verschiebung, 
    $f V_H^i = V_{W_j}^i f^{(p^i)}$, and $V_{W_j}^i = 0$ for $i \geq j$.
    Thus $f$ factors through $H \to \coker(V_H^i)$, as desired.
\end{proof}

Fix $i \in \{1, \dots, n-1\}$. Let $Q_i = (G^\vee[F^i])^\vee = \coker(V_G^i)$, a quotient of the $n$-cosmooth group scheme $G$, and let $K_{n-i}$ be the kernel of $G \to Q_i$. We have a short exact sequence 
\begin{equation}\label{eq: canonical cosmooth quotients}
	\begin{tikzcd}
	0 & {K_{n-i}} & G & {Q_i} & 0
	\arrow[from=1-1, to=1-2]
	\arrow[from=1-2, to=1-3]
	\arrow[from=1-3, to=1-4]
	\arrow[from=1-4, to=1-5]
\end{tikzcd},\end{equation}
and by Lemma \ref{lemma: exact sequences for n-smooth}, $Q_i$ is $i$-cosmooth and $K_{n-i}$ is $(n-i)$-cosmooth. 
Since $G$ is $n$-cosmooth, we have an identification $K_{n-i} \cong Q_{n-i}^{(p^i)}$. Note also that the ranks of $G, Q_i$, and $K_{n-i}$ agree.

\begin{lemma}\label{lemma: zero coboundary}
	If $G/\Spec R$ is an $n$-cosmooth group scheme, then the coboundary map 
	\[ \Hom(G,W_{n-1}) \to \Ext^1(G,\mathbb{G}_a)\]
	induced by the short exact sequence $0 \to \mathbb{G}_a \to W_{n} \to W_{n-1} \to 0$
	is zero.
\end{lemma}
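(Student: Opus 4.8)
The plan is to reduce the claim to a statement that can be checked Zariski-locally and then compute using Grothendieck's formula. The coboundary arises by applying $R\Hom_{\AbR}(G,-)$ to the triangle $\mathbb{G}_a \to W_n \to W_{n-1}$, so first I would pass to internal $\Hom$ and $\Ext$. By Lemma~\ref{lemma: grothendieck hom formula}, $\tau_{\le 1} R\Homs(G,\mathbb{G}_a) \simeq R\Homs_{\OO}(\ell_{G^\vee},\OO)$, so the fppf sheaves $\Homs(G,\mathbb{G}_a)$ and $\mathcal{E}xt^1(G,\mathbb{G}_a)$ are cohomology sheaves of a perfect complex, hence quasi-coherent. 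As $\Spec R$ is affine, the fppf cohomology of a quasi-coherent sheaf vanishes in positive degrees, so the local-to-global spectral sequence degenerates to give $\Ext^1(G,\mathbb{G}_a) = \Gamma(\mathcal{E}xt^1(G,\mathbb{G}_a))$, and by naturality the global coboundary is the global sections of a sheaf-level coboundary $\delta^{\mathrm{sh}}\colon \Homs(G,W_{n-1}) \to \mathcal{E}xt^1(G,\mathbb{G}_a)$. It therefore suffices to show $\delta^{\mathrm{sh}} = 0$, which may be checked after passing to a Zariski cover.

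Working locally, I would choose coordinates so that $G^\vee \cong \Spec R[x_1,\dots,x_r]/(x_1^{p^n},\dots,x_r^{p^n})$. Then $\ell_{G^\vee}$ is represented by a two-term complex $[\ell^{-1} \xrightarrow{d} \ell^0]$ of free $R$-modules of rank $r$ in degrees $-1,0$, and since $d(x_i^{p^n}) = p^n x_i^{p^n-1}\, dx_i = 0$ in characteristic $p$ (using $n \ge 1$), the differential $d$ vanishes. Hence $R\Homs_{\OO}(\ell_{G^\vee},\OO) \simeq (\ell^0)^\vee \oplus (\ell^{-1})^\vee[-1]$; in particular $\mathcal{E}xt^1(G,\mathbb{G}_a) \cong (\ell^{-1})^\vee$, the dual of the conormal module of $G^\vee$, which is nonzero.

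The heart of the matter is to show that the specific class $\delta^{\mathrm{sh}}(f) = f^*[W_n] \in \mathcal{E}xt^1(G,\mathbb{G}_a)$ vanishes — not because the target is zero, but because the pulled-back Witt extension splits. The mechanism is that the extension $0 \to \mathbb{G}_a \xrightarrow{V^{n-1}} W_n \to W_{n-1} \to 0$ is of Frobenius type: its connecting map, transported through Grothendieck's identification, is induced by the relative Frobenius acting on $\ell_{G^\vee}$ (for $n=2$ this is literally a self-map $H^0 \to H^1$ of $R\Homs_\OO(\ell_{G^\vee},\OO)$; in general one uses the full Witt filtration). For $n$-smooth $G^\vee$ this Frobenius vanishes — on $H^0 = \omega_{G^\vee}$ because $F^*$ kills Kähler differentials ($d(t^p)=0$), and on $H^{-1}$ because, as $n \ge 1$, Frobenius sends each conormal generator $x_i^{p^n}$ to $x_i^{p^{n+1}}$, which lies in the square of $(x_1^{p^n},\dots,x_r^{p^n})$. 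Concretely and dually, this says that the pulled-back Witt cocycle $\tfrac{1}{p}\big((x+y)^p - x^p - y^p\big)$ becomes a coboundary in $\OO(G)$: the truncated divided-power structure on the Hopf algebra dual to $R[x]/(x^{p^n})$ supplies the required antiderivative (built from the divided power $\gamma_p$, available precisely because $n \ge 2$), so the extension splits. I expect the main obstacle to be making this identification of the Witt coboundary with the vanishing Frobenius on $\ell_{G^\vee}$ precise and uniform in $n$ and $r$; the explicit divided-power computation is the concrete verification underlying it.
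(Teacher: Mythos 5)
Your opening reduction is fine: by Lemma~\ref{lemma: grothendieck hom formula} both $\Homs(G,\mathbb{G}_a)$ and $\mathcal{E}xt^1(G,\mathbb{G}_a)$ are quasi-coherent, so over the affine base the local-to-global spectral sequence embeds $\Ext^1(G,\mathbb{G}_a)$ into $\Gamma(\mathcal{E}xt^1(G,\mathbb{G}_a))$ compatibly with coboundaries, and your local description of $\ell_{G^\vee}$ (zero differential, free cohomology of rank $r$) agrees with Lemma~\ref{lemma: n-smooth coLie}. The gap is the step you yourself flag as the main obstacle, and it is conceptual, not technical: the coboundary is \emph{not} ``induced by the relative Frobenius acting on $\ell_{G^\vee}$'' in any sense to which your vanishing argument applies. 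The two vanishing reasons you give --- $F^*$ kills K\"ahler differentials since $d(t^p)=0$, and $F^*$ maps the conormal ideal $I$ into $I^p\subseteq I^2$ --- hold for \emph{every} finite locally free group scheme in characteristic $p$ and nowhere use $n$-smoothness. If the coboundary were computed by this functorial Frobenius, it would therefore vanish for every finite locally free $G$, which is false: take $G=\alpha_p$ over a field (so $G^\vee=\alpha_p$ is $1$-smooth but not $2$-smooth) and $n=2$. Then $F_{\alpha_p}$ is literally the zero homomorphism, so every Frobenius-induced map vanishes; yet $\Hom(\alpha_p,W_2)\to\Hom(\alpha_p,W_1)$ is zero (a homomorphism $(c\,\iota,\psi)\colon \alpha_p\to W_2$ lifting $c\,\iota$ would force $\psi(x+y)-\psi(x)-\psi(y)$, of degree $<p$, to equal the degree-$p$ Witt cocycle $c^p\phi_p(x,y)$ in $k[x,y]/(x^p,y^p)$, which is impossible for $c\neq 0$), so the coboundary $\Hom(\alpha_p,W_1)\to\Ext^1(\alpha_p,\mathbb{G}_a)$ is injective and nonzero. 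Whatever operation the Witt coboundary induces on the co-Lie side, it is a secondary (Bockstein-type) operation whose vanishing is exactly the content of the lemma; the hypothesis of $n$-cosmoothness must enter somewhere, and in your argument it never does.

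Your divided-power computation at the end is correct, but only in one special case: when $G^\vee=\alpha_{p^n}$ with its \emph{additive} Hopf structure, $\OO(G)$ is the truncated divided-power algebra, $y^{[p]}$ exists precisely when $n\geq 2$, and it cobounds the Witt cocycle in the primitive element $y$. But $n$-smoothness constrains only the local \emph{algebra} structure of $\OO(G^\vee)$, not its Hopf structure, and the class to be killed is $f^*[W_n]$ for an \emph{arbitrary} $f\in\Hom(G,W_{n-1})$ and an arbitrary group law; nothing in the proposal reduces the general case to the additive one. For contrast, the paper makes $n$-cosmoothness act on $f$ itself rather than on local coordinates: every $f\in\Hom(G,W_{n-1})$ factors through $Q_{n-1}$ by Lemma~\ref{lemma: maps into smaller Witt}, hence restricts to zero on $K_1=\ker(G\to Q_{n-1})$, while a rank count on co-Lie complexes combined with Lemma~\ref{lemma: grothendieck hom formula} shows that $\Ext^1(G,\mathbb{G}_a)\to\Ext^1(K_1,\mathbb{G}_a)$ is an isomorphism; thus $f^*[W_n]$ is detected on $K_1$, where it is visibly zero. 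Some input of this kind --- exploiting that $f$ is special, not merely that $\OO(G^\vee)$ has a special algebra structure --- is what your local splitting argument is missing.
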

\begin{proof}
    Consider the commutative square 
\[
\begin{tikzcd}
	{\Hom(G,W_{n-1})} & {\Ext^1(G,\mathbb{G}_a)} \\
	{\Hom(K_1,W_{n-1})} & {\Ext^1(K_1,\mathbb{G}_a)}
	\arrow[from=1-1, to=1-2]
	\arrow[from=1-1, to=2-1]
	\arrow[from=1-2, to=2-2]
	\arrow[from=2-1, to=2-2]
\end{tikzcd}
\]
    induced by the short exact sequence $0 \to \mathbb{G}_a \to W_n \to W_{n-1} \to 0$.
	By Lemma \ref{lemma: maps into smaller Witt}, the map $G \to Q_{n-1}$ induces an isomorphism $\Hom(Q_{n-1},W_{n-1}) \cong \Hom(G, W_{n-1})$,
	so restriction along $K_1 \to G$ induces the zero map $\Hom(G,W_{n-1}) \to \Hom(K_1,W_{n-1})$. We will show $\Ext^1(G,\mathbb{G}_a) \to \Ext^1(K_1,\mathbb{G}_a)$ is an isomorphism, from which the lemma follows.
    
    Consider the fiber sequence of co-Lie complexes $\ell_{K_{1}^\vee} \to \ell_{G^\vee} \to \ell_{Q_{n-1}^\vee} \to^{+1}$. The associated long exact sequence of cohomology groups is of the form
\[\begin{tikzcd}
	0 & {H^{-1}(\ell_{K_{1}^\vee})} & {H^{-1}(\ell_{G^\vee})} & {H^{-1}(\ell_{Q_{n-1}^\vee})} \\
	& {H^0(\ell_{K_{1}^\vee})} & {H^0(\ell_{G^\vee})} & {H^0(\ell_{Q_{n-1}^\vee})} & 0
	\arrow[from=1-1, to=1-2]
	\arrow["\ast", from=1-2, to=1-3]
	\arrow[from=1-3, to=1-4]
	\arrow["\ast"{description}, from=1-4, to=2-2]
	\arrow[from=2-2, to=2-3]
	\arrow["\ast", from=2-3, to=2-4]
	\arrow[from=2-4, to=2-5]
\end{tikzcd},\]
    where all the groups involved are locally free $R$-modules of the same rank by Lemma \ref{lemma: n-smooth coLie}. Climbing the exact sequence shows that the maps marked by $\ast$ are isomorphisms; in particular, $H^{-1}(\ell_{K_1^\vee}) \to H^{-1}(\ell_{G^\vee})$ is an isomorphism.
    Now apply the Grothendieck formula $\tau^{\leq 1} R\Homs(A,\mathbb{G}_a) \cong R\Homs(\ell_{A^\vee},\OO)$ of Lemma \ref{lemma: grothendieck hom formula}. If $A$ is a $j$-cosmooth group scheme, then $\ell_{A^\vee}$ has locally free cohomology groups by Lemma \ref{lemma: n-smooth coLie}, so $\Ext^i(A,\mathbb{G}_a) \cong \Hom_R( H^{-i}(\ell_{A^\vee}), R)$ for $i \in \{0,1\}$. Thus $\Ext^1(G,\mathbb{G}_a) \to \Ext^1(K_1,\mathbb{G}_a)$ is an isomorphism.
\end{proof}

\begin{corollary}\label{corollary: exact into Wn}
	If $G/\Spec R$ is an $n$-cosmooth group scheme, then for all $j \in \{1,\ldots, n-1\}$, the short exact sequence $0 \to W_j \to W_n \to W_{n-j} \to 0$ induces a short exact sequence
	\[ 0 \to \Hom(G, W_j) \to \Hom(G,W_n) \to \Hom(G,W_{n-j})\to 0.\]
\end{corollary}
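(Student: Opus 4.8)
The plan is to apply the left exact functor $\Hom(G,-)$ to the short exact sequence of fppf sheaves $0 \to W_j \to W_n \to W_{n-j} \to 0$ and extract the assertion from the resulting long exact sequence. Left exactness immediately gives exactness of $0 \to \Hom(G,W_j) \to \Hom(G,W_n) \to \Hom(G,W_{n-j})$, with $\Hom(G,W_j)$ identified as the kernel of the truncation-induced map. Everything therefore reduces to showing that $\Hom(G,W_n) \to \Hom(G,W_{n-j})$ is surjective, equivalently that the connecting map $\Hom(G,W_{n-j}) \to \Ext^1(G,W_j)$ vanishes. The case $j=1$ is immediate: there the relevant sequence is $0 \to \mathbb{G}_a \to W_n \to W_{n-1} \to 0$ and the connecting map $\Hom(G,W_{n-1}) \to \Ext^1(G,\mathbb{G}_a)$ is precisely the one shown to be zero in Lemma \ref{lemma: zero coboundary}, so $\Hom(G,W_n) \to \Hom(G,W_{n-1})$ is onto.

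For general $j$ I would reduce to the case $j=1$ by telescoping. The truncation $W_n \to W_{n-j}$ factors as the composite of one-step truncations $W_n \to W_{n-1} \to \cdots \to W_{n-j}$, so it suffices to prove that each intermediate map $\Hom(G,W_m) \to \Hom(G,W_{m-1})$ is surjective for $m \in \{n-j+1,\dots,n\}$ and then compose surjections. The main point, which I expect to be the crux, is thus to establish that for every $m$ with $2 \le m \le n$ the connecting homomorphism $\delta_m\colon \Hom(G,W_{m-1}) \to \Ext^1(G,\mathbb{G}_a)$ attached to $0 \to \mathbb{G}_a \to W_m \to W_{m-1} \to 0$ is zero. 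Only the top level $m=n$ is directly covered by Lemma \ref{lemma: zero coboundary}, and the work is to propagate the vanishing downward.

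I would do this by naturality rather than by re-running the co-Lie complex computation. The Verschiebung maps $V^{n-m}$ assemble into a morphism of short exact sequences from $0 \to \mathbb{G}_a \to W_m \to W_{m-1} \to 0$ to $0 \to \mathbb{G}_a \to W_n \to W_{n-1} \to 0$ whose left component is the identity of $\mathbb{G}_a$ (since the composite $\mathbb{G}_a \xrightarrow{V^{m-1}} W_m \xrightarrow{V^{n-m}} W_n$ is the inclusion $V^{n-1}$ of the top level). Applying $\Hom(G,-)$ and using naturality of the connecting homomorphism yields $\delta_m = \delta_n \circ w_*$, where $w_*\colon \Hom(G,W_{m-1}) \to \Hom(G,W_{n-1})$ is induced by $V^{n-m}$; since $\delta_n = 0$ by Lemma \ref{lemma: zero coboundary}, we conclude $\delta_m = 0$. (Alternatively, one can repeat the proof of Lemma \ref{lemma: zero coboundary} verbatim at level $m$: the only input that changes is the vanishing of $\Hom(G,W_{m-1}) \to \Hom(K_1,W_{m-1})$, which still holds by Lemma \ref{lemma: maps into smaller Witt} because $n-1 \ge m-1$, while the isomorphism $\Ext^1(G,\mathbb{G}_a) \cong \Ext^1(K_1,\mathbb{G}_a)$ is unchanged.) With all $\delta_m$ vanishing, each intermediate truncation induces a surjection, their composite $\Hom(G,W_n) \to \Hom(G,W_{n-j})$ is surjective, and combined with left exactness this produces the claimed short exact sequence.
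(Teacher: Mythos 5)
Your proof is correct, and its skeleton matches the paper's: reduce by left exactness to the surjectivity of $\Hom(G,W_n) \to \Hom(G,W_{n-j})$, telescope into one-step truncations $\Hom(G,W_m) \to \Hom(G,W_{m-1})$, and deduce everything from the vanishing statement of Lemma \ref{lemma: zero coboundary}. The difference lies in how the levels $m < n$ are handled. The paper varies the group: by Lemma \ref{lemma: maps into smaller Witt}, the level-$m$ truncation map for $G$ is identified with the top-level truncation map for the $m$-cosmooth quotient $Q_m$, and Lemma \ref{lemma: zero coboundary} is then applied to $Q_m$. You instead vary the coefficients: the shift $V^{n-m}$ defines a morphism of short exact sequences from $0 \to \mathbb{G}_a \to W_m \to W_{m-1} \to 0$ to $0 \to \mathbb{G}_a \to W_n \to W_{n-1} \to 0$ inducing the identity on $\mathbb{G}_a$ (since $V^{n-m} \circ V^{m-1} = V^{n-1}$), so naturality of the connecting homomorphism gives $\delta_m = \delta_n \circ (V^{n-m})_* = 0$, invoking Lemma \ref{lemma: zero coboundary} only for $G$ itself at the top level. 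Your route thus needs neither the quotients $Q_m$ nor Lemma \ref{lemma: maps into smaller Witt} in the induction, at the cost of verifying the compatibility of the shifts with the two sequences, which you do correctly; the paper's route is shorter on the page because those ingredients are already in place from the preceding lemmas. Your parenthetical alternative (re-running the proof of Lemma \ref{lemma: zero coboundary} at level $m$, with the restriction-vanishing supplied by Lemma \ref{lemma: maps into smaller Witt} since $n-1 \geq m-1$) is also valid and is the closest in spirit to the paper's argument.
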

\begin{proof}
    Since $\Hom$ is left exact, it suffices to show that $\Hom(G,W_n) \to \Hom(G,W_{n-j})$ is surjective.
    By induction, it suffices to show $\Hom(G, W_i) \to \Hom(G,W_{i-1})$ is surjective for $i \leq n$.
    By Lemma \ref{lemma: maps into smaller Witt}, it suffices to check when $i = n$, in which case surjectivity follows from Lemma \ref{lemma: zero coboundary}.
\end{proof}

Recall from Definition \ref{definition: cosmooth module} that a left $D_R$-module $M$ is $n$-cosmooth if $V^nM = 0$, $\ker(V^i) = \im(V^{n-i})$ for all $i \in \{1,\ldots, n-1\}$, and $M/VM$ is a finitely generated projective $R$-module.
\begin{definition} \label{definition: rank of n-cosmooth module}
The \emph{rank} of an $n$-cosmooth module $M$ is the rank of $M/VM$ as an $R$-module.
\end{definition}

The rank of an $n$-cosmooth group scheme $G$ was defined in §\ref{subsection: n-smooth} to be the rank of the Lie algebra of $G^\vee$.  Using Corollary \ref{corollary: exact into Wn}, we can then establish that the Dieudonn\'e modules of $n$-cosmooth group schemes are $n$-cosmooth $D_R$-modules of the appropriate rank.

\begin{corollary} \label{corollary: image is n-cosmooth}
If $G/\Spec R$ is an $n$-cosmooth group scheme, then $\Mn(G)$ is an $n$-cosmooth $D_R$-module. The rank of $G$ agrees with the rank of $\Mn(G)$.
\end{corollary}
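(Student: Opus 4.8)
The plan is to verify directly the three conditions of Definition \ref{definition: cosmooth module} for the $D_R$-module $M := \Mn(G) = \Hom(G,W_n)$, extracting everything from the exact sequences of Corollary \ref{corollary: exact into Wn} together with the explicit form of the Verschiebung on truncated Witt vectors. By construction $V$ acts on $M$ by postcomposition with the operator $V_{W_n}$, which under the identification $W_n^{(p)} \cong W_n$ is by Lemma \ref{lemma: Witt V is Verschiebung} the shift $(x_1,\ldots,x_n)\mapsto(0,x_1,\ldots,x_{n-1})$. Since the shift is nilpotent of order $n$, we have $V^n f = V_{W_n}^n\circ f = 0$ for every $f$, which is condition (1). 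I note that only the $V$-operator appears in Definition \ref{definition: cosmooth module}, so the Frobenius on $W_n$ plays no role in what follows.

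The main observation is that the iterated shift factors. For each $i$, the map $V_{W_n}^i$ decomposes as $W_n \to W_{n-i} \xrightarrow{V^i} W_n$, where the first arrow is the projection onto the first $n-i$ coordinates and the second is the inclusion as the last $n-i$ coordinates; these are precisely the surjection and injection occurring in the short exact sequences $0\to W_i \to W_n \to W_{n-i}\to 0$ and $0\to W_{n-i}\to W_n \to W_i\to 0$ of Corollary \ref{corollary: exact into Wn}. Postcomposing with $G$, the operator $V^i$ on $M$ factors as $\Hom(G,W_n)\to\Hom(G,W_{n-i})\hookrightarrow\Hom(G,W_n)$, where the first map is surjective and the second is the inclusion of the submodule $\Hom(G,W_{n-i})$, both by Corollary \ref{corollary: exact into Wn}. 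Hence $\im(V^i)=\Hom(G,W_{n-i})$ inside $M$. Dually, writing $V_{W_n}^{n-i}$ as $W_n\to W_i \xrightarrow{V^{n-i}}W_n$ with the second map a monomorphism, the kernel of $V^{n-i}$ on $M$ coincides with the kernel of $\Hom(G,W_n)\to\Hom(G,W_i)$, which by the same corollary is again the submodule $\Hom(G,W_{n-i})$. Therefore $\ker(V^{n-i})=\im(V^i)$, establishing condition (3).

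Specializing to $i=1$ (equivalently, applying Corollary \ref{corollary: exact into Wn} with $j=n-1$) identifies $M/VM$ with $\Hom(G,W_1)=\Hom(G,\mathbb{G}_a)=\Lie(G^\vee)$. As $G$ is $n$-cosmooth, $G^\vee$ is $n$-smooth, so $\Lie(G^\vee)$ is a finite locally free $R$-module of rank equal to $\rank(G^\vee)=\rank(G)$. This gives condition (2), and since the rank of $M$ is by Definition \ref{definition: rank of n-cosmooth module} the rank of $M/VM$, it simultaneously proves the asserted equality of ranks.

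The genuine content of the argument is already carried by Corollary \ref{corollary: exact into Wn}, whose surjectivity statement ultimately rests on the vanishing of the coboundary map and the Grothendieck formula. Consequently, the only real obstacle remaining here is bookkeeping: one must check carefully that the factorizations of $V_{W_n}^i$ and $V_{W_n}^{n-i}$ genuinely match the injections and surjections of the corollary's sequences, keeping track of the Frobenius twists implicit in the identification $W_n^{(p)}\cong W_n$. I expect no conceptual difficulty beyond this index- and twist-chasing.
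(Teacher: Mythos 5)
Your proposal is correct and matches the paper's own proof: both verify the three conditions of Definition \ref{definition: cosmooth module} directly from Corollary \ref{corollary: exact into Wn}, identifying $\im(V^i)=\ker(V^{n-i})$ with the submodule $\Hom(G,W_{n-i})$ and $M/VM$ with $\Hom(G,\mathbb{G}_a)=\Lie(G^\vee)$ to get projectivity and the equality of ranks. The only difference is that you spell out the factorization of $V_{W_n}^i$ through the projection and inclusion of truncated Witt vectors, which the paper treats as immediate.
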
 

\begin{proof}
This is a direct application of Corollary \ref{corollary: exact into Wn}. Let $M = \M_n(G)$. Clearly $V^n = 0$ on $M$. Moreover, by Corollary \ref{corollary: exact into Wn}, $M/VM = \Hom(G, \mathbb{G}_a) = \Lie(G^\vee)$.
By Lemma \ref{lemma: n-smooth coLie}, $M/VM$ is a finitely generated projective $R$-module and the rank of $G$ is the rank of $M/VM$.  Finally, the exactness of Corollary \ref{corollary: exact into Wn} is the statement that $\im(V^j) = \ker(V^{n-j})$ on $M$ for all $j \in \{1, \dots, n-1\}$, as desired.
\end{proof}

\subsection{The Dieudonné functor is fully faithful}

Corollary \ref{corollary: exact into Wn} implies that if $G$ is $n$-cosmooth, then $\Mn(G)$ is an iterated extension of the restricted Lie algebra of $G^\vee$. We now prove that $\Mn$ is fully faithful on $n$-cosmooth group schemes inductively using this structure of an iterated extension.

\begin{theorem}\label{theorem: isomorphism of unit}
For all $n \geq 1$, the functor $\Mn$ is fully faithful on $n$-cosmooth group schemes over $\Spec R$.
\end{theorem}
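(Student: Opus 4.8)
The plan is to use the adjunction of Lemma~\ref{lemma: adjunction} to reduce full faithfulness to a statement about its unit, which I would then prove by dévissage along the canonical filtration \eqref{eq: canonical cosmooth quotients}. Concretely, regarding $\Mn$ as a functor $\AbR \to (D_R\modc)^{op}$ left adjoint to $\Gn$, Lemma~\ref{lemma: adjunction} supplies a unit $\eta_A \colon A \to \Gn(\Mn(A))$, natural in $A$, sending $a \in A(S)$ to the evaluation homomorphism $f \mapsto f(a)$. For $n$-cosmooth $G, G'$ the natural map $\Hom(G,G') \to \Hom_{D_R}(\Mn(G'), \Mn(G))$ measuring full faithfulness factors, through the adjunction isomorphism $\Hom_{D_R}(\Mn(G'), \Mn(G)) \cong \Hom_{\AbR}(G, \Gn(\Mn(G')))$, as post-composition with $\eta_{G'}$; as $\eta_{G'}$ is an isomorphism of fppf sheaves this composite is a bijection for every $G$. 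Thus it suffices to prove that the unit $\eta_G \colon G \to \Gn(\Mn(G))$ is an isomorphism for every $n$-cosmooth group scheme $G$.

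I would prove this by induction on $n$. The base case $n = 1$ is the classical equivalence between height-$1$ group schemes and restricted Lie algebras (the $n=1$ case of Theorem~\ref{thm:equiv}, cf.\ \cite[Theorem 2.4.2]{Dri24} and \cite[Exposé VII.A]{SGA3}): here $\M_1(G) = \Hom(G,\mathbb{G}_a) = \Lie(G^\vee)$, and the assertion that $\eta_G$ is an isomorphism is precisely that $G = (G^\vee)^\vee$ is recovered from this restricted Lie algebra. For the inductive step I would invoke the canonical short exact sequence \eqref{eq: canonical cosmooth quotients} with $i = n-1$,
\[ 0 \to K_1 \to G \to Q_{n-1} \to 0, \]
in which $Q_{n-1}$ is $(n-1)$-cosmooth and $K_1$ is $1$-cosmooth by Lemma~\ref{lemma: exact sequences for n-smooth}. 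Naturality of $\eta$ then produces a commutative ladder whose top row is this sequence and whose three vertical arrows are $\eta_{K_1}$, $\eta_G$, and $\eta_{Q_{n-1}}$.

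The heart of the argument is to realize the bottom row of this ladder as a short exact sequence
\[ 0 \to \G_1(\M_1(K_1)) \to \Gn(\Mn(G)) \to \G_{n-1}(\M_{n-1}(Q_{n-1})) \to 0 \]
whose outer terms are the targets of $\eta_{K_1}$ and $\eta_{Q_{n-1}}$. On the module side, Corollary~\ref{corollary: exact into Wn} (with $j = 1$) exhibits $\Mn(G)$ as an extension of $\M_{n-1}(G)$ by $\M_1(G)$; Lemma~\ref{lemma: maps into smaller Witt} identifies $\M_{n-1}(G) \cong \M_{n-1}(Q_{n-1})$, while $\M_1(G) = \Lie(G^\vee)$. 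Applying the functors $\Gn$ and comparing along the shift and truncation maps $W_1 \hookrightarrow W_n \twoheadrightarrow W_{n-1}$ converts this extension into the displayed sequence of group schemes. Since $\eta_{K_1}$ and $\eta_{Q_{n-1}}$ are isomorphisms by the base case and the inductive hypothesis, the snake lemma forces the middle unit $\eta_G$ to be an isomorphism, completing the induction.

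The main obstacle I anticipate is establishing that the bottom row is genuinely short exact — in particular, surjective on the right — and that it is correctly glued to the outer units, since the functors $\Gn$ are a priori only left exact and the three columns involve Witt vectors of three different lengths. I expect to control this using exactly the homological inputs of the preceding subsection: the vanishing coboundary of Lemma~\ref{lemma: zero coboundary}, the exactness of Corollary~\ref{corollary: exact into Wn}, and the cosmoothness identities $\ker V^i = \im V^{n-i}$, which together should supply the needed surjectivity and the compatibility of the length-$1$, length-$(n-1)$, and length-$n$ units under the maps relating $W_1$, $W_{n-1}$, and $W_n$.
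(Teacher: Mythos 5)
Your skeleton---reducing full faithfulness to the unit $\eta_G\colon G \to \Gn\Mn(G)$ via Lemma \ref{lemma: adjunction}, inducting on $n$ with the SGA3 base case, and finishing with a snake-lemma ladder---is exactly the paper's strategy, but your d\'evissage has a genuine gap, located precisely at the step you flagged as the ``main obstacle.'' The issue is contravariance. The only exactness available on the module side is the \emph{coefficient} filtration of Corollary \ref{corollary: exact into Wn}: $0 \to \M_1(G)|_{\sigma^{n-1}} \to \Mn(G) \to \M_{n-1}(G) \to 0$, in which the $1$-cosmooth module is the \emph{sub} and the $(n-1)$-cosmooth module is the \emph{quotient}. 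Applying the contravariant, left-exact $\Gn$ flips sub and quotient: one gets $0 \to \Gn(\M_{n-1}(G)) \to \Gn\Mn(G) \to \Gn(\M_1(G)|_{\sigma^{n-1}})$, whose \emph{kernel} is the group attached to the big, $(n-1)$-cosmooth module. This is the opposite alignment from your top row $0 \to K_1 \to G \to Q_{n-1} \to 0$, where the small piece $K_1$ is the subobject; so the bottom row you display, with $\G_1(\M_1(K_1))$ as kernel and $\G_{n-1}(\M_{n-1}(Q_{n-1}))$ as cokernel, is not what $\Gn$ applied to Corollary \ref{corollary: exact into Wn} produces. Concretely, the natural vertical map out of $K_1$ lands in $\Gn(\M_{n-1}(G))$ (every homomorphism $G \to \mathbb{G}_a$ kills $\im V \supseteq \im V^{n-1} = K_1$), and it cannot be an isomorphism: $K_1$ has order $p^r$ while $\Gn(\M_{n-1}(G))$ is (a twist of) $Q_{n-1}$, of order $p^{(n-1)r}$. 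So for $n \geq 3$ the outer verticals of your ladder are not isomorphisms and the snake argument collapses (for $n=2$ the two filtrations coincide, which is why the case $n=2$ looks fine). The only way to salvage your top row would be to prove that $\Mn$ applied to \eqref{eq: canonical cosmooth quotients} is \emph{short} exact, i.e.\ that restriction $\Hom(G,W_n) \to \Hom(K_1,W_n)$ is surjective; that is an Ext-vanishing statement for the \emph{subgroup} filtration, which none of Lemma \ref{lemma: zero coboundary}, Corollary \ref{corollary: exact into Wn}, or the identities $\ker V^i = \im V^{n-i}$ supplies, and which the paper never proves.

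The repair is to run the induction on the other canonical sequence, as the paper does: $0 \to Q_{n-1}^{(p)} \to G \to Q_1 \to 0$, where $Q_{n-1}^{(p)} \cong \im(V) = \ker(G \to Q_1)$. Now sub and quotient match the $\Gn$-image of the coefficient filtration: the kernel $\Gn(\M_{n-1}(G)) \cong \Gn(\M_{n-1}(Q_{n-1}))$ is compared with $Q_{n-1}^{(p)}$ via a map which, after the Frobenius-twist identifications, is the level-$(n-1)$ unit (an isomorphism by induction), while $Q_1 \to \G_1\M_1(G) \cong \G_1\M_1(Q_1)$ is the level-$1$ unit (the base case). The remaining work is not exactness but bookkeeping: one must construct the comparison maps respecting the twists (the maps $\tilde\varphi$ and $T$ in the paper's proof) and check injectivity of $\G_1\M_1(G) \to \Gn(\M_1(G)|_{\sigma^{n-1}})$ so that the bottom row, with third term $\G_1\M_1(G)$, stays exact. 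With that alignment the snake lemma does exactly what you intended.
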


\begin{proof}
    By the adjunction of $\Gn$ and $\Mn$ of Lemma \ref{lemma: adjunction}, 
    it suffices to show that if $G/\Spec R$ is $n$-cosmooth, then the unit map $G \to \Gn\Mn(G)$ is an isomorphism.

	The proof is by induction on $n$. The case $n=1$ is the classical correspondence between restricted Lie algebras and 1-smooth group schemes \cite[Exposé VII.A]{SGA3}.
	Now suppose $G$ is $(n+1)$-cosmooth.
	We have a short exact sequence of $D_{R}$-module schemes 
	\[ 0 \to (\mathbb{G}_a)|_{\sigma^{n}} \to W_{n+1} \to W_{n} \to 0,\]
	where $(-)|_{\sigma}$ means restriction of scalars along Frobenius and the map $(\mathbb{G}_a)|_{\sigma^n} \to W_{n+1}$ is the inclusion of the image of $V^n$.
	By Corollary \ref{corollary: exact into Wn},
	we obtain a short exact sequence of $D_R$-modules
	\[
		0 \to \M_1(G)|_{\sigma^n} \to \M_{n+1}(G) \to \M_n(G) \to 0.
	\]
	Now apply $\underline{G}_{n+1}$ to obtain an exact sequence 
	\begin{equation} \label{eq: Gn on ses}
		0 \to \G_{n+1}\M_n(G) \to \G_{n+1}\M_{n+1}(G) \to \G_{n+1}(\M_1(G)|_{\sigma^n}).
	\end{equation}
	Let $G \to Q_n$ be the cokernel of $V^n$ and let $G \to Q_1$ be the cokernel of $V$.  Then $Q_n^{(p)} = G^{(p)}/\ker(V) \cong \im(V: G^{(p)} \to G)$. We will compare \eqref{eq: Gn on ses} to the exact sequence $ 0 \to Q_n^{(p)} \to G \to Q_1 \to 0$.
	
	We first construct a commutative diagram 	
	\begin{equation}\label{eq: square for sub of G}
		\begin{tikzcd}[cramped]
		0 & {Q_n^{(p)}} & G \\
		0 & {\G_{n+1}\M_n(G)} & {\G_{n+1}\M_{n+1}(G)}
		\arrow[from=1-1, to=1-2]
		\arrow[from=1-2, to=1-3]
		\arrow["\cong"{description}, from=1-2, to=2-2]
		\arrow[from=1-3, to=2-3]
		\arrow[from=2-1, to=2-2]
		\arrow[from=2-2, to=2-3]
	\end{tikzcd}\end{equation}
	where $G\to \G_{n+1}\M_{n+1}(G)$ is the unit map.
	Observe $V^n \M_n(G) = 0$, so 
	\begin{align*} 
		\G_{n+1}\M_n(G) &= \Hom_{D_R}(\Hom(G,W_n),W_{n+1}) \\
		&= \Hom_{D_R}(\Hom(G,W_n),W_{n+1}[V^n]) \\
		&\cong \Hom_{D_R}(\Hom(G,W_n),W_n|_{\sigma}).
	\end{align*}
    Since $\sigma: W_n(S)\to W_n(S)$ is induced by the Frobenius of $S$, 
    it follows that $W_n|_\sigma = W_n^{(p)}$ as sheaves of $D_R$-modules.
    Thus
    \[
        \Hom_{D_R}(\Hom(G,W_n), W_n|_\sigma) \cong \Hom_{D_R}(\Hom(G,W_n),W_n^{(p)}) = \G_n\M_n(G)^{(p)}.
    \]
    Now if $f \in \M_{n+1}(G) = \Hom(G, W_{n+1})$, then by naturality of the Verschiebung, $Vf^{(p)} = f V$, so the following square commutes:
    \[
\begin{tikzcd}[ampersand replacement=\&]
	{Q_n^{(p)}} \& G \\
	{W_n^{(p)}} \& {W_{n+1}}
	\arrow[from=1-1, to=1-2]
	\arrow["{f^{(p)} \mod V^n}"', from=1-1, to=2-1]
	\arrow["f"', from=1-2, to=2-2]
	\arrow[from=2-1, to=2-2]
\end{tikzcd}
.
\]    
Hence the following diagram commutes
\[
\begin{tikzcd}[ampersand replacement=\&]
	{Q_n^{(p)}} \&\& G \\
	{\G_n\M_n(G)^{(p)}} \& {\G_{n+1}\M_n(G)} \& {\G_{n+1}\M_{n+1}(G)}
	\arrow[from=1-1, to=1-3]
	\arrow[from=1-1, to=2-1]
	\arrow[from=1-3, to=2-3]
	\arrow["\sim", from=2-1, to=2-2]
	\arrow[from=2-2, to=2-3]
\end{tikzcd}
\]
where the vertical arrows are the unit maps.
    By induction, the composition $Q_n^{(p)} \to \G_{n+1} \M_{n}(G)$ is an isomorphism.

	Now consider the map $\G_{n+1}\M_{n+1}(G) \to \G_{n+1}(\M_1(G)|_{\sigma^n})$.
	Similarly, $V M_1(G) = 0$, so 
	\begin{align*}
		\G_{n+1}(\M_1(G)|_{\sigma^n}) &= \Hom_{D_R}(\Hom(G,(\mathbb{G}_a)|_{\sigma^n}),W_{n+1}) \\
  &= \Hom_{D_R}(\Hom(G,(\mathbb{G}_a)|_{\sigma^n}),W_{n+1}[V]) \\
		&= \Hom_{D_R}(\Hom(G,(\mathbb{G}_a)|_{\sigma^n}),(\mathbb{G}_a)|_{\sigma^n}).
	\end{align*}
	Now Lemma \ref{lemma: maps into smaller Witt} implies $\M_1(Q_1) \to \M_1(G)$ is an isomorphism.
	Further, since every $D_R$-linear map is also linear after restriction of scalars along $\sigma^n$, we have an injection 
	\[ 
		\G_1\M_1(G)= \Hom_{D_R}(\Hom(G,\mathbb{G}_a),\mathbb{G}_a)\to \Hom_{D_R}(\Hom(G,(\mathbb{G}_a)|_{\sigma_n}),(\mathbb{G}_a)|_{\sigma^n}) = \G_{n+1}(\M_1(G)|_{\sigma^n})
	\]
	Thus we have a commutative square 
\[\begin{tikzcd}[cramped]
	G & {Q_1} & 0 \\
	& {\G_1\M_1(G)} \\
	{\G_{n+1}\M_{n+1}(G)} & {\G_{n+1}(\M_1(G)|_{\sigma^n})}
	\arrow[from=1-1, to=1-2]
	\arrow[from=1-1, to=3-1]
	\arrow[from=1-2, to=1-3]
	\arrow[from=1-2, to=2-2]
	\arrow[hook', from=2-2, to=3-2]
	\arrow[from=3-1, to=3-2]
\end{tikzcd}.\]
	We claim that there exists a map 
	\[ T: \G_{n+1}\M_{n+1}(G) \to \G_1\M_1(G)\]
	which makes the following diagram commute:
\begin{equation}\label{eq: pentagon for quotient of G}
	\begin{tikzcd}[cramped]
	G & {Q_1} & 0 \\
	& {\G_1\M_1(G)} \\
	{\G_{n+1}\M_{n+1}(G)} & {\G_{n+1}(\M_1(G)|_{\sigma^n})}
	\arrow[from=1-1, to=1-2]
	\arrow[from=1-1, to=3-1]
	\arrow[from=1-2, to=1-3]
	\arrow[from=1-2, to=2-2]
	\arrow[hook', from=2-2, to=3-2]
	\arrow["T", dashed, from=3-1, to=2-2]
	\arrow[from=3-1, to=3-2]
\end{tikzcd}
\end{equation}
	If $\psi \in \G_{n+1}\M_{n+1}(G)$ and $f \in \M_1(G)$, then define $T\psi(f) \in \mathbb{G}_a$ as follows:
	by Corollary \ref{corollary: exact into Wn}, there exists $\tilde f \in \M_{n+1}(G)$ lifting $f \in \M_1(G)$ along $\M_{n+1}(G) \to \M_1(G)$. Now $T\psi(f)$ is defined to be the image of $\psi(\tilde f)$ under $W_{n+1} \to \mathbb{G}_a$.
	This is well-defined: lifts $\tilde f$ of $f$ are defined up to $V$, and $\psi$ is $V$-linear.

	Now we show \eqref{eq: pentagon for quotient of G} commutes. The quadrilateral commutes by definition of the unit morphisms. To address the triangle, observe that the map $\G_1\M_1(G) \to \G_{n+1}(\M_1(G)|_{\sigma^n})$ takes $\psi \in \G_1\M_1(G)$ and $f \in \Hom(G,\mathbb{G}_a)|_{\sigma^n}$ to $(0,\ldots,0,\psi(f)) \in V^nW_{n+1}$. But $(0,\ldots,0,f) = V^n \tilde f$ for any lift $\tilde f: G \to W_{n+1}$ of $f: G \to \mathbb{G}_a$. Thus, if $\tilde \psi \in \G_{n+1}\M_{n+1}(G)$ and $f \in \M_1(G)|_{\sigma^n}$,  then 
    \[ (T\tilde \psi) ((0,\ldots,0,f)) = 
       \tilde \psi(V^n \tilde f) = V^n \tilde \psi(\tilde f)  
    \]
    as $\tilde \psi$ is $V$-linear.
    Thus the triangle commutes.

	Joining \eqref{eq: square for sub of G} and \eqref{eq: pentagon for quotient of G} gives a diagram 
\[\begin{tikzcd}[cramped]
	0 & {Q_{n}^{(p)}} & G & {Q_1} & 0 \\
	0 & {\G_{n+1}\M_n(G)} & {\G_{n+1}\M_{n+1}(G)} & {\G_1\M_1(G)}
	\arrow[from=1-1, to=1-2]
	\arrow[from=1-2, to=1-3]
	\arrow[from=1-2, to=2-2]
	\arrow[from=1-3, to=1-4]
	\arrow[from=1-3, to=2-3]
	\arrow[from=1-4, to=1-5]
	\arrow[from=1-4, to=2-4]
	\arrow[from=2-1, to=2-2]
	\arrow[from=2-2, to=2-3]
	\arrow["T"', from=2-3, to=2-4]
\end{tikzcd}\]
	The first row is exact. Moreover, 
	since $\G_1\M_1(G) \to \G_{n+1}(\M_1(G)|_{\sigma^n})$ is injective, the kernel of $T$ is the kernel of $\G_{n+1}\M_{n+1}(G) \to \G_{n+1}(\M_1(G)|_{\sigma^n})$. Since \eqref{eq: Gn on ses} is exact, the second row is exact.
	The first and third vertical maps are isomorphisms by induction.
	The snake lemma shows $G\to \G_{n+1}\M_{n+1}(G)$ is an isomorphism, as desired.
\end{proof}	

\section{Essential Image} \label{section: essentially surjective}

As outlined in the introduction, our strategy for calculating the essential image of the Dieudonné functor $\Mn$ is to compare the Dieudonné modules of $n$-smooth group schemes with the Cartier modules of formal Lie groups.  The key technical result is Proposition \ref{prop:n-cosmooth local}, which describes a Zariski-local presentation for $n$-cosmooth modules analogous to the presentation of a Cartier module by structure equations.  This is applied in Proposition \ref{proposition: lift n-cosmooth module} to lift $n$-cosmooth modules to Cartier modules of formal Lie groups.  We then combine this result with Cartier theory and descent to describe the essential image of $\M_n$ in §\ref{subsection: essential image}.

\subsection{Review of Cartier theory} \label{subsection: Cartier}

The \emph{$p$-typical Cartier ring} $\mathbb{E} = \mathbb{E}_R$ is the $V$-adic completion of the ring $D_R$; see \cite[Definition and Theorem 4.17]{Zin84}.
By definition, an $\mathbb{E}$-module $M$ is \emph{$V$-reduced} if $V: M \to M$ is injective and $M = \varprojlim_n M/V^nM$ (i.e. $M$ is $V$-adically complete and $V$-torsion free), and $M$ is \emph{$\infty$-cosmooth} if $M$ is $V$-reduced and $M/VM$ is a finite locally free $R$-module.  As usual, we define the \emph{rank} of an $\infty$-cosmooth module $M$ to be the rank of $M/VM$ as an $R$-module.

The following is a reformulation of the second main theorem of local Cartier theory, following the exposition of \cite[Chapter IV]{Zin84}.  Because the formulation is nonstandard, we give a proof with references to the literature.

\begin{theorem}[Cartier theory] \label{thm:Cartier theory}
The functor $G \mapsto \Hom(G^\vee, W)$ defines an equivalence of categories between formal Lie groups over $R$ and $\infty$-cosmooth $\mathbb{E}_R$-modules. The dimension of $G$ as a formal Lie group agrees with the rank of $\Hom(G^\vee,W)$ as an $\infty$-cosmooth module.
\end{theorem}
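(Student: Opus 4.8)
The plan is to deduce the theorem from the classical second main theorem of local Cartier theory by identifying the functor $G \mapsto \Hom(G^\vee, W)$ with the usual functor of $p$-typical curves. Recall that classical Cartier theory, in the formulation of \cite[Chapter IV]{Zin84}, provides an equivalence between formal Lie groups over $R$ and $V$-reduced $\mathbb{E}_R$-modules $M$ for which $M/VM$ is finite locally free; these are precisely the $\infty$-cosmooth modules, and under the equivalence the dimension of the formal group equals the rank of $M/VM$. The classical functor sends $G$ to its module of $p$-typical curves, which in the $p$-typical theory may be written as $C(G) = \Hom(\hat{W}, G)$, where $\hat{W} = \varinjlim_n \hat{W}_n$ is the formal Witt group and $\mathbb{E}_R$ acts through the standard operators $F$, $V$, and the homotheties $[a]$ for $a \in W(R)$. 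Thus it suffices to produce a natural, $\mathbb{E}_R$-linear isomorphism $\Hom(G^\vee, W) \cong \Hom(\hat{W}, G)$; the identification of $\infty$-cosmooth modules and of ranks is then immediate from the classical statement.

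To construct the isomorphism, first write $\Hom(G^\vee, W) = \varprojlim_n \Hom(G^\vee, W_n)$. Since $G = \varinjlim_n G[F^n]$ and $V^n = 0$ on $W_n$, the same argument as in Lemma \ref{lemma: maps into smaller Witt} shows that any homomorphism $G^\vee \to W_n$ factors through the $n$-cosmooth quotient $G[F^n]^\vee$, so $\Hom(G^\vee, W_n) \cong \M_n(G[F^n]^\vee)$. Next I would apply the generalized Cartier duality recalled in §\ref{section: preliminaries}, which pairs the affine group scheme $W_n$ with a formal group: biduality gives $\Hom(G[F^n]^\vee, W_n) \cong \Hom(W_n^\vee, G[F^n])$, and the crucial input is the identification $W_n^\vee \cong \hat{W}_n$ of the Cartier dual of truncated Witt vectors with the truncated formal Witt group, compatibly with Frobenius and Verschiebung (so that $\hat{W}_n$ is $F^n$-torsion, forcing $\Hom(\hat{W}_n, G) = \Hom(\hat{W}_n, G[F^n])$). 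Passing to the limit then yields $\Hom(G^\vee, W) \cong \varprojlim_n \Hom(\hat{W}_n, G) \cong \Hom(\hat{W}, G) = C(G)$. Finally one checks that the $D_R$-action by postcomposition on $W$ matches the Cartier-ring action on curves under these identifications, which amounts to unwinding how $F$, $V$, and $[a]$ act on $W_n^\vee \cong \hat{W}_n$.

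I expect the main obstacle to be the duality comparison of the second step: setting up Cartier duality between the affine group scheme $W_n$ and the formal group $\hat{W}_n$ within the extended duality between locally free group schemes and formal group schemes, verifying that the isomorphism $W_n^\vee \cong \hat{W}_n$ is equivariant for $F$ and $V$ (and hence $\mathbb{E}_R$-linear after taking $\Hom$ into $G$), and controlling the inverse limits, for which the factoring through the finite stages $G[F^n]^\vee$ and a Mittag--Leffler condition ensuring the vanishing of $\varprojlim^1$ are needed. Once this $\mathbb{E}_R$-linear identification $\Hom(G^\vee, W) \cong C(G)$ is in place, the equivalence of categories, the characterization of the essential image as the $\infty$-cosmooth modules, and the equality of ranks all follow directly from the classical theorem.
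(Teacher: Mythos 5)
Your overall strategy---reducing to Zink's formulation of the classical theory and identifying $\Hom(G^\vee,W)$ with the module of $p$-typical curves $\Hom(\hat{W},G)$ via Cartier duality between Witt group schemes and formal Witt groups---is exactly the paper's route (the paper invokes \cite[Theorems 4.15, 4.23, Corollary 4.16]{Zin84} and then cites \cite[Chapter V, \S 4.4.6]{DG70} for $W^\vee \cong \hat{W}$ in one stroke). However, your ``crucial input'' is false as stated: the Cartier dual of the affine group scheme $W_n$ is \emph{not} the $n$-dimensional truncated formal Witt group $\hat{W}_n$. Cartier duality exchanges Frobenius and Verschiebung, so since $V^n = 0$ on $W_n$ (the shift, Lemma \ref{lemma: Witt V is Verschiebung}), the dual $W_n^\vee$ must be killed by $F^n$; but $F^n \neq 0$ on $\hat{W}_n = \hat{W}/V^n\hat{W}$ (already for $n=1$, the relative Frobenius of $\hat{W}_1 = \hat{\mathbb{G}}_a$ is nonzero). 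Your parenthetical claim that ``$\hat{W}_n$ is $F^n$-torsion'' is precisely where this breaks: $\hat{W}_n$ is $V^n$-torsion, not $F^n$-torsion. The correct identification is $W_n^\vee \cong \hat{W}[F^n]$: dualizing the presentation $W_n = \coker(V^n : W \to W)$ and using $W^\vee \cong \hat{W}$ together with $(V_W)^\vee = F_{\hat{W}}$ gives $W_n^\vee = \ker(F^n : \hat{W} \to \hat{W})$. Concretely, for $n=1$ this says $\mathbb{G}_a^\vee$ is the infinite ind-dimensional, $F$-torsion formal group $\hat{W}[F]$ (homomorphisms $\mathbb{G}_a \to \mathbb{G}_m$ are finite products $\prod_i E(a_i x^{p^i})$ of Artin--Hasse exponentials with $a_i$ nilpotent), not the one-dimensional $\hat{\mathbb{G}}_a$.

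The gap is repairable, and the repair lands you on the paper's proof: replace $\hat{W}_n$ by $\hat{W}[F^n]$ throughout. Then $W_n^\vee$ really is $F^n$-torsion, so $\Hom(W_n^\vee, G[F^n]) = \Hom(\hat{W}[F^n], G)$; the maps dual to the projections $W_{n+1} \to W_n$ are the inclusions $\hat{W}[F^n] \subset \hat{W}[F^{n+1}]$; and since $\hat{W}$ is ind-infinitesimal, $\hat{W} = \varinjlim_n \hat{W}[F^n]$, whence $\varprojlim_n \Hom(\hat{W}[F^n],G) = \Hom(\hat{W},G)$. (Your Mittag--Leffler worry is unnecessary: every limit here is handled by $\Hom$ commuting with limits in the target and colimits in the source, with no $\varprojlim^1$ terms appearing.) With this correction, your level-by-level argument amounts to reconstructing by hand the duality $W^\vee \cong \hat{W}$ that the paper simply cites, after which Zink's equivalence, the identification of the essential image with $\infty$-cosmooth modules, and the rank statement follow exactly as you say.
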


\begin{proof}
The standard formulation of Cartier theory gives us an equivalence by sending a formal Lie group $G$ to the $\mathbb{E}$-module of $p$-typical curves in $G$; moreover, under this equivalence, the dimension of $G$ agrees with the rank of its Cartier module (\cite[Theorem 4.23]{Zin84}). By \cite[Theorem 4.15]{Zin84}, we may identify the $p$-typical curves in $G$ with the $\mathbb{E}$-module $\Hom(\widehat{W}, G)$ where $\widehat{W}$ is the Witt vector formal group and the left $\mathbb{E}$-module structure arises via the identification $\mathbb{E} = \End(\widehat{W})^{\op}$ (\cite[Corollary 4.16]{Zin84}
\footnote{The identification in \cite[Corollary 4.16]{Zin84} is an identification of $\mathbb{E}$ with $\End(\widehat{W})^{\op}$. This is also consistent with the definition of the Cartier ring in \cite[Definition 3.6]{Zin84}.}). Since $\widehat{W}$ is the Cartier dual of $W$ \cite[Chapter V, \S 4.4.6]{DG70}, we find a natural isomorphism $\Hom(\widehat{W},G) \cong \Hom(G^\vee, W)$, which finishes the proof.
\end{proof}

\subsection{Preliminaries on modules over the Cartier ring}

For all $n \geq 1$, Lemma \ref{lemma: VD is a two-sided ideal} implies that $V^nD_R$ is a two-sided ideal. Thus, $\mathbb{E}_n := \mathbb{E}/V^n\mathbb{E} = D_R/V^nD_R$ is a ring, known as the \emph{truncated Cartier--Dieudonn\'e ring} over $R$.

\begin{lemma}\label{lemma: V is injective on E}
Left multiplication by $V: \mathbb{E} \to \mathbb{E}$ is injective.
\end{lemma}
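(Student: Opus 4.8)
The plan is to use the explicit additive normal form for elements of $\mathbb{E}$. Since $\mathbb{E}$ is the $V$-adic completion of $D_R$, the relations $Fa = \sigma(a)F$ and $aV = V\sigma(a)$ let one move every Witt coefficient to the right of each power of $V$ and to the left of each power of $F$, while the relation $VaF = v(a)$ collapses every mixed monomial: applying it $\min(i,j)$ times gives $V^i a F^j = V^{i-j}v^j(a)$ when $i \ge j$ and $V^i a F^j = v^i(a) F^{j-i}$ when $j > i$. Consequently every element of $\mathbb{E}$ has a unique representation
\[
  x = \sum_{i \ge 0} V^i a_i + \sum_{j \ge 1} b_j F^j, \qquad a_i, b_j \in W(R),
\]
where the $V$-series converges $V$-adically and only finitely many $b_j$ are nonzero. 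This is the standard structure theorem for the $p$-typical Cartier ring (cf.\ \cite{Zin84}); I would either cite it or, to keep the argument self-contained, record it as a preliminary lemma proved by the reduction just described.

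With the normal form in hand, the proof is a direct computation. Multiplying $x$ on the left by $V$ gives $Vx = \sum_{i \ge 0} V^{i+1} a_i + \sum_{j \ge 1} V b_j F^j$. The first sum is already in normal form, contributing the coefficient $a_{i-1}$ to the slot $V^i$ for each $i \ge 1$ and nothing to the constant term. For the second sum, the relation $VaF = v(a)$ gives $V b_j F^j = v(b_j) F^{j-1}$; this contributes $v(b_1)$ to the constant term when $j = 1$ and $v(b_{k+1})$ to the slot $F^k$ when $j = k+1 \ge 2$. Collecting terms, the normal form of $Vx$ has constant coefficient $v(b_1)$, coefficient $a_{i-1}$ at $V^i$ for $i \ge 1$, and coefficient $v(b_{k+1})$ at $F^k$ for $k \ge 1$.

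Now suppose $Vx = 0$. By uniqueness of the normal form, every coefficient above vanishes. The Verschiebung $v \colon W(R) \to W(R)$ is the shift $(c_0, c_1, \dots) \mapsto (0, c_0, c_1, \dots)$ and is therefore injective, so $v(b_1) = 0$ forces $b_1 = 0$ and $v(b_{k+1}) = 0$ forces $b_{k+1} = 0$ for all $k \ge 1$, while vanishing of the $V^i$-coefficients forces $a_{i-1} = 0$ for all $i \ge 1$. Hence all $a_i$ and all $b_j$ vanish and $x = 0$, proving that left multiplication by $V$ is injective. The only real work is establishing the normal form: once the mixed-monomial reduction $VaF = v(a)$ is correctly bookkept, including its interaction with the $\sigma$-twists incurred when moving coefficients past $V$ and $F$, and the uniqueness of the representation is in place, injectivity of $V$ is immediate from injectivity of the Witt-vector Verschiebung.
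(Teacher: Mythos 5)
Your strategy---put elements of $\mathbb{E}$ into a unique normal form and observe that left multiplication by $V$ acts injectively on coefficients---is the same as the paper's, which simply invokes Zink's structure theorem \cite[Definition/Theorem 4.17]{Zin84}: every element of $\mathbb{E}$ is uniquely of the form $\sum_{r,s\geq 0} V^r[x_{r,s}]F^s$ with $x_{r,s}\in R$ and, for each fixed $r$, $x_{r,s}=0$ for $s\gg 0$. In that form the lemma is immediate, since left multiplication by $V$ is the pure index shift $r\mapsto r+1$; one does not even need injectivity of the Witt-vector Verschiebung.

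The genuine gap is that your preliminary normal form is false as stated: the side condition ``only finitely many $b_j$ are nonzero'' is correct for $D_R$, whose elements are finite sums of monomials, but it does not survive $V$-adic completion. For example, the partial sums of $e=\sum_{j\geq 1}V^jF^{2j}$ are Cauchy (the tail past $j=N$ lies in $V^{N+1}D_R$), so $e\in\mathbb{E}$; but $V^jF^{2j}=v^j(1)F^j$, so in your form $e$ forces $b_j=v^j(1)\neq 0$ for every $j$, and expanding any expression $\sum_i V^ia_i+\sum_{j\leq M}b_jF^j$ into Zink's normal form produces entries $x_{r,s}$ that vanish whenever $s-r>M$, whereas $e$ has $x_{j,2j}=1$ for all $j$---so by Zink's uniqueness $e$ admits no expression of your type. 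Consequently an arbitrary $x\in\mathbb{E}$ with $Vx=0$ need not be expressible in your form, and your computation never gets started; injectivity of $V$ on the dense subring $D_R$ does not imply injectivity on the completion. The fix is to replace your finiteness condition by the correct one, namely $b_j\to 0$ $V$-adically as $j\to\infty$ (equivalently, for each $m$ one has $b_j\in v^mW(R)$ for $j\gg 0$); this is what Zink's row-finiteness becomes when his Teichm\"uller coefficients are assembled into Witt vectors. With that correction your argument is sound: $Vx=v(b_1)+\sum_{i\geq 1}V^ia_{i-1}+\sum_{k\geq 1}v(b_{k+1})F^k$ is again in normal form, and uniqueness plus injectivity of $v$ gives $x=0$. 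One further caution: the ``reduction just described'' yields at most existence, and only over $D_R$; uniqueness is the substantive input, and since the citable statement in \cite{Zin84} is the Teichm\"uller-coefficient form, your Witt-coefficient form must still be derived from it---routine bookkeeping, but precisely where the finiteness condition went wrong.
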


\begin{proof}
By \cite[Definition/Theorem 4.17]{Zin84}, any element of $\mathbb{E}$ has a unique representation of the form
\begin{equation*}
\sum_{r, s \geq 0} V^r [x_{r, s}] F^s
\end{equation*}
with $x_{r, s} \in R$ and $x_{r, s} = 0$ for fixed $r$ and $s$ sufficiently large, and the lemma follows immediately. 
\end{proof}

The following lemma tells us that the rings $\mathbb{E}_n$ share some formal properties with $n$-cosmooth $D_R$-modules.

\begin{lemma} \label{lem:mod}
The rings $\mathbb{E}_n$ satisfy $V^i\mathbb{E}_n = \mathbb{E}_n[V^{n-i}]$ for all $i \in \{1, \dots, n-1\}$.  Multiplication by $V^{n-i}$ is an isomorphism $(\mathbb{E}_n/V^i\mathbb E_n)|_{\sigma^{n-i}} \cong V^{n-i}\mathbb{E}_n$ of left $\mathbb{E}_n$-modules.
\end{lemma}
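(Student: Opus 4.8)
The plan is to reduce both assertions to the explicit normal form for elements of $\mathbb{E}$ recalled in the proof of Lemma \ref{lemma: V is injective on E}: every element of $\mathbb{E}$ has a unique expression $\sum_{r,s\geq 0} V^r[x_{r,s}]F^s$ with $x_{r,s}\in R$ vanishing for fixed $r$ and $s$ large. First I would record that $V^n\mathbb{E}$ consists of exactly those elements whose normal form involves only terms with $r\geq n$; this follows from $V^n\cdot V^r[x]F^s = V^{n+r}[x]F^s$ together with uniqueness of the normal form and continuity of $V^n$. Consequently $\mathbb{E}_n=\mathbb{E}/V^n\mathbb{E}$ is identified, as an abelian group, with the set of normal forms satisfying $0\leq r\leq n-1$.

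In these terms both sides of the first identity are immediate. Left multiplication by $V^i$ sends $V^r[x]F^s$ to $V^{i+r}[x]F^s$ and kills the terms with $i+r\geq n$, so $V^i\mathbb{E}_n$ is the span of normal forms whose $V$-exponent lies in $[i,n-1]$. Likewise $w\mapsto V^{n-i}w$ kills precisely the terms of $w$ with $r\geq i$, so $V^{n-i}w=0$ in $\mathbb{E}_n$ if and only if the normal form of $w$ involves only $r\geq i$; hence $\mathbb{E}_n[V^{n-i}]$ is the same span, giving $V^i\mathbb{E}_n=\mathbb{E}_n[V^{n-i}]$.

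For the second statement the key algebraic input is the commutation relation $aV^{j}=V^{j}\sigma^{j}(a)$ in $\mathbb{E}$, valid for all $a\in \mathbb{E}$ and $j\geq 0$. I would verify it on generators of $D_R$: for $a\in W(R)$ it follows by induction from $aV=V\sigma(a)$; for $a=F$ it follows from $FV=VF=p$ and $\sigma(F)=F$; and it extends multiplicatively, hence to all of $\mathbb{E}$ by $V$-adic continuity. This relation yields three things at once. It shows $V^i\mathbb{E}_n$ is a left ideal, since $a(V^ix)=V^i\sigma^i(a)x$, so that $\mathbb{E}_n/V^i\mathbb{E}_n$ is a left $\mathbb{E}_n$-module (and similarly $V^{n-i}\mathbb{E}_n$ is a left ideal). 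It shows that left multiplication by $V^{n-i}$ is left $\mathbb{E}_n$-linear once the source is twisted, because $V^{n-i}\bigl(\sigma^{n-i}(a)w\bigr)=aV^{n-i}w$; thus $w\mapsto V^{n-i}w$ defines a left $\mathbb{E}_n$-linear map $(\mathbb{E}_n)|_{\sigma^{n-i}}\to \mathbb{E}_n$. Finally, since $V^{n-i}(V^iw)=V^nw=0$ in $\mathbb{E}_n$, this map factors through $(\mathbb{E}_n/V^i\mathbb{E}_n)|_{\sigma^{n-i}}$; its kernel is $\mathbb{E}_n[V^{n-i}]=V^i\mathbb{E}_n$ by the first part, so the induced map is injective, and its image is by definition $V^{n-i}\mathbb{E}_n$. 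Hence it is the asserted isomorphism of left $\mathbb{E}_n$-modules.

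The hard part is almost entirely bookkeeping: keeping the left-module structures straight and getting the direction of the $\sigma^{n-i}$-twist right, since left multiplication by $V^{n-i}$ is linear \emph{only} after twisting. Once the normal form and the commutation relation $aV^{j}=V^{j}\sigma^{j}(a)$ are in hand, there is no substantive difficulty; the only point requiring care is checking that $V^n\mathbb{E}$ really is the span of the high-$V$-exponent normal forms, which is where injectivity of $V$ (Lemma \ref{lemma: V is injective on E}) and $V$-adic completeness are used.
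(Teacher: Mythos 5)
Your proof is correct. The difference from the paper lies in the first identity: the paper argues by lifting an element $x \in \mathbb{E}_n[V^{n-i}]$ to $\tilde{x} \in \mathbb{E}$, observing that $V^{n-i}\tilde{x} = V^n y = V^{n-i}V^i y$, and cancelling $V^{n-i}$ using injectivity of left multiplication by $V$ on $\mathbb{E}$ (Lemma \ref{lemma: V is injective on E}); it never needs a normal-form description of $\mathbb{E}_n$ itself. You instead descend Zink's normal form to $\mathbb{E}_n$ --- identifying $\mathbb{E}_n$ with normal forms supported in $V$-degrees $[0,n-1]$ --- and read off both $V^i\mathbb{E}_n$ and $\mathbb{E}_n[V^{n-i}]$ as the span of forms supported in degrees $[i,n-1]$. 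The two arguments rest on the same ultimate input (uniqueness of the normal form, which is also how the paper proves injectivity of $V$), but the paper's lift-and-cancel version is shorter and sidesteps the continuity and descent bookkeeping that you correctly flag as the delicate point of your approach; your version buys a concrete description of $\mathbb{E}_n$ and all of its ideals $V^i\mathbb{E}_n$, which is more information than the lemma asks for. For the second statement the two proofs essentially coincide: the paper's chain of identifications $(\mathbb{E}_n/V^i\mathbb{E}_n)|_{\sigma^{n-i}} = \mathbb{E}_n|_{\sigma^{n-i}}/\ker(V^{n-i}) = V^{n-i}\mathbb{E}_n$ is exactly your first-isomorphism-theorem argument, with the commutation relation $aV^{j} = V^{j}\sigma^{j}(a)$ (which you verify on generators and the paper leaves implicit) supplying the $\sigma^{n-i}$-twisted linearity.
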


\begin{proof}
Clearly $V^i \mathbb{E}_n \subset \mathbb{E}_n[V^{n-i}]$.  Conversely, if $x \in \mathbb{E}_n[V^{n-i}]$, lift $x$ to $\tilde x \in \mathbb{E}$; then $V^{n-i}\tilde x = V^n y$ for some $y \in \mathbb E$. By Lemma \ref{lemma: V is injective on E}, left multiplication by $V$ is injective on $\mathbb E$, so $\tilde x = V^i y$ and thus $x \in V^i \mathbb{E}_n$. Thus $V^i\mathbb E_n = \mathbb E_n[V^{n-i}]$. The second statement then follows, as
\begin{equation*}
(\mathbb{E}_n/V^i\mathbb E_n)|_{\sigma^{n-i}} = \mathbb{E}_n|_{\sigma^{n-i}}/V^i = \mathbb{E}_n|_{\sigma^{n-i}}/\ker(V^{n-i}) = V^{n-i}\mathbb{E}_n. \qedhere
\end{equation*}
\end{proof}

The following lemmas establish that certain properties of $\mathbb{E}_n$-modules can be checked modulo $V$.

\begin{lemma} \label{lem:Einj}
Let $u: T \to L$ be a map of free $\mathbb{E}_n$-modules.  If the induced map $\ol{u}: T/VT \to L/VL$ is injective, then so is $u$.
\end{lemma}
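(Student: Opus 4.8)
The plan is to run a ``leading $V$-term'' argument, exploiting that $V^n = 0$ on any $\mathbb{E}_n$-module to pin down the exact order of $V$-divisibility of a hypothetical element of $\ker u$ and then contradict the injectivity of $\ol{u}$. Concretely, I would suppose $x \in T$ is nonzero with $u(x) = 0$. Since $T$ is an $\mathbb{E}_n$-module, $V^n T = 0$, so the descending chain $T \supseteq VT \supseteq \cdots \supseteq V^n T = 0$ is finite and there is a largest $k$ with $0 \le k \le n-1$ and $x \in V^k T$. Writing $x = V^k x_0$, maximality of $k$ forces $x_0 \notin VT$, i.e. $\ol{x_0} \neq 0$ in $T/VT$; since $\ol{u}$ is injective this gives $u(x_0) \notin VL$. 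On the other hand $0 = u(x) = V^k u(x_0)$, so everything reduces to showing that $V^k$ kills no element of $L$ lying outside $VL$.

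I would isolate this as the key claim: for a free $\mathbb{E}_n$-module $L = \mathbb{E}_n^{\oplus d}$ and $k \in \{1, \dots, n-1\}$, one has $L[V^k] \subseteq VL$. This is exactly where Lemma \ref{lem:mod} does the work: $V^k$ acts coordinatewise, so $L[V^k] = (\mathbb{E}_n[V^k])^{\oplus d}$, and Lemma \ref{lem:mod} (applied with $i = n-k \in \{1,\dots,n-1\}$) identifies $\mathbb{E}_n[V^k] = \mathbb{E}_n[V^{n-i}] = V^i\mathbb{E}_n = V^{n-k}\mathbb{E}_n$. Since $n - k \ge 1$ this lies in $V\mathbb{E}_n$, whence $L[V^k] = V^{n-k}L \subseteq VL$. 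Feeding this back: if $k \ge 1$, then $V^k u(x_0) = 0$ would put $u(x_0) \in L[V^k] \subseteq VL$, contradicting $u(x_0) \notin VL$; and if $k = 0$, then $x = x_0 \notin VT$ gives $u(x) = u(x_0) \notin VL$, so $u(x) \neq 0$ directly. Either way we reach a contradiction, so $u$ is injective.

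The only genuine content is the key claim, and the main obstacle it must overcome is that $V$ acts $\sigma$-semilinearly and is very far from injective on $\mathbb{E}_n$ (indeed $V^n = 0$), so neither a naive injectivity argument nor a crude associated-graded computation works without tracking the $F$-tails of elements. The identity $\mathbb{E}_n[V^k] = V^{n-k}\mathbb{E}_n$ of Lemma \ref{lem:mod} resolves this precisely by showing that the only elements killed by $V^k$ are those already deeply $V$-divisible. (Equivalently, one could argue through the Zink normal form of Lemma \ref{lemma: V is injective on E}: left multiplication by $V^k$ shifts the $V$-degree of each term $V^r[x_{r,s}]F^s$ to $r+k$, and a term of $V$-degree $0$ in $z$ survives in $\mathbb{E}_n$ after the shift because $k < n$, so it cannot be truncated away; this makes the non-vanishing of the leading term fully explicit.) Finally I would note that freeness of $T$ is never used beyond $V^n T = 0$, which holds for any $\mathbb{E}_n$-module; it is freeness of $L$ that powers the coordinatewise reduction in the key claim.
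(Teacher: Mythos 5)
Your proof is correct. It rests on the same key input as the paper's proof---Lemma \ref{lem:mod}---but takes a different route to the conclusion: the paper argues by induction on $n$, applying the inductive hypothesis to $u'\colon T/V^nT \to L/V^nL$ and then handling the remaining piece $V^nT$ via the isomorphism $(T/VT)|_{\sigma^n} \cong V^nT$ given by multiplication by $V^n$ (the second statement of Lemma \ref{lem:mod}, extended coordinatewise), whereas you avoid induction entirely by extracting the exact order of $V$-divisibility of a hypothetical nonzero kernel element, $x = V^k x_0$ with $x_0 \notin VT$, and invoking the kernel identification $\mathbb{E}_n[V^k] = V^{n-k}\mathbb{E}_n \subseteq V\mathbb{E}_n$ (the first statement of Lemma \ref{lem:mod}) to contradict $u(x_0) \notin VL$. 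Both arguments are the same mechanism at heart---injectivity mod $V$ propagates along the $V$-adic filtration because Lemma \ref{lem:mod} controls the $V$-torsion---but your organization buys two small things: it is non-inductive, and, as you observe, it never uses freeness of $T$ (only $V^nT = 0$, automatic for any $\mathbb{E}_n$-module), whereas the paper's induction as written needs $T$ free both to apply the inductive hypothesis to $T/V^nT$ and to obtain the left-hand vertical isomorphism from Lemma \ref{lem:mod}. This mild strengthening is not needed downstream (in Lemma \ref{lem:inj} both modules are free), but it is a clean byproduct. Your parenthetical alternative via the normal form of Lemma \ref{lemma: V is injective on E} is also sound, though it amounts to re-proving the first statement of Lemma \ref{lem:mod} rather than using it.
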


\begin{proof}
We proceed by induction. Suppose the lemma holds for $\mathbb{E}_n$-modules, and let $u: T \to L$ be a map of free $\mathbb{E}_{n+1}$-modules which is injective modulo $V$.  Then $u': T/V^{n}T \to L/V^{n}L$ is injective by the inductive hypothesis.  Furthermore there is a commutative square
\[\begin{tikzcd}
	{(T/VT)|_{\sigma^n}} & {(L/VL)|_{\sigma^n}} \\
	{V^nT/V^{n+1}T} & {V^nL/V^{n+1}L}
	\arrow["u", from=1-1, to=1-2]
	\arrow["{V^n}"', from=1-1, to=2-1]
	\arrow["{V^n}"', from=1-2, to=2-2]
	\arrow["u", from=2-1, to=2-2]
\end{tikzcd}\]
where the vertical arrows are the isomorphisms of Lemma \ref{lem:mod}. It follows that $V^{n}T \to V^{n}L$ is injective, and consequently $u$ is injective.
\end{proof}

Let $W(R)\{V\}$ denote the associative ring generated by $W(R)$ and $V$ subject to $aV = V\sigma(a)$ for all $a \in W(R)$. By construction there is a natural map $W(R)\{V\} \to D_R$.

\begin{lemma} \label{lem:generators}
Let M be an $\mathbb{E}_n$-module, and suppose that $e_1, \dots, e_r \in M$ are such that their images modulo $V$ generate $M/VM$ as an $R$-module. Then $M$ is generated over $W(R)\{V\}$ by $e_1,\ldots, e_r$.
\end{lemma}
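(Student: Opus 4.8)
The plan is to prove that the $W(R)\{V\}$-submodule $N \subseteq M$ generated by $e_1, \dots, e_r$ is in fact all of $M$, via a Nakayama-type argument that exploits the nilpotence of $V$ on an $\mathbb{E}_n$-module. Note that the whole point of the statement is that one may generate $M$ without invoking $F$, using only scaling by $W(R)$ and the operator $V$; so the argument must avoid $F$ except through the relations of $D_R$.

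First I would pin down the sense in which $M/VM$ is an $R$-module. Since $\mathbb{E}_n$ contains $W(R)$, the module $M$ is a $W(R)$-module, and I claim the induced $W(R)$-action on $M/VM$ factors through the projection $W(R) \twoheadrightarrow W(R)/v(W(R)) \cong R$, where $v$ is the Witt Verschiebung. Indeed, the relation $v(a) = VaF$ in $D_R$ gives $v(a)\cdot m = V(aFm) \in VM$ for every $a \in W(R)$ and $m \in M$, so the ideal $v(W(R))$ annihilates $M/VM$. This is the reading of the hypothesis that the images of the $e_i$ generate $M/VM$ \emph{as an $R$-module}.

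With this in hand, I would show $N + VM = M$. As $N$ contains $W(R)\cdot e_i$ for each $i$, the image of $N$ in $M/VM$ contains $R\cdot \ol{e}_i$; since the $\ol{e}_i$ generate $M/VM$ over $R$ by hypothesis, the image of $N$ is all of $M/VM$, which is exactly the assertion $N + VM = M$. The final step is to bootstrap this to $N = M$ using that $N$ is stable under $V$ (because $V \in W(R)\{V\}$) and that $V^n = 0$ on $M$: from $M = N + VM$, applying $V$ gives $VM = VN + V^2M \subseteq N + V^2M$, whence $M = N + V^2M$, and inductively $M = N + V^kM$ for all $k \geq 1$; taking $k = n$ yields $M = N + V^nM = N$.

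I do not anticipate a serious obstacle; the only point requiring genuine care is the identification in the second paragraph of the $W(R)$-action on $M/VM$ with an $R$-action through $v(a) = VaF$, since this is precisely what lets the base case $N + VM = M$ go through without ever using $F$ and makes the hypothesis applicable.
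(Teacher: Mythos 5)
Your proof is correct and takes essentially the same approach as the paper: the paper passes to the $V$-adic associated graded $\gr_V(M)$ and uses that the filtration is finite and exhaustive since $V^n=0$, which is precisely your induction $M = N + V^kM$. Your careful identification of the $R$-action on $M/VM$ as the $W(R)$-action factoring through $W(R)/v(W(R)) \cong R$ via the relation $v(a) = VaF$ is a point the paper leaves implicit, but it does not alter the argument.
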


\begin{proof}
As $e_1, \dots, e_r$ generate $M/VM$ over $R$, $V^i e_1, \dots, V^i e_r$ generate $V^iM/V^{i+1}M$ over $R$. Therefore $e_1, \dots, e_r$ generate $\gr_V(M)$ over $W(R)\{V\}$.  As $V$ is nilpotent on $M$, the $V$-adic filtration on $M$ is finite and exhaustive, so $e_1, \dots, e_r$ generate $M$ over $W(R)\{V\}$. 
\end{proof}
Finally, we show that for $n$-cosmooth modules, checking that a morphism is an isomorphism can be done modulo $V$.

\begin{lemma} \label{lemma: isomorphism mod V}
Let $f: M' \to M$ be a map of $n$-cosmooth $D_R$-modules such that the induced morphism $\ol{f}: M'/VM' \to M/VM$ is an isomorphism.  Then $f$ is an isomorphism.
\end{lemma}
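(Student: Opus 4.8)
The plan is to use the finite $V$-adic filtration carried by any $n$-cosmooth module and to reduce the statement to the hypothesis on $\overline{f}$ by passing to associated graded pieces. Since $V^n = 0$ on both $M'$ and $M$, each carries the exhaustive, separated filtration $M \supseteq VM \supseteq \cdots \supseteq V^nM = 0$, and $f$ respects these filtrations because it is $D_R$-linear, so that $f(V^i x) = V^i f(x)$. It therefore suffices to show that $f$ induces an isomorphism on each graded piece $\gr^i_V M := V^i M / V^{i+1}M$ and then to climb the finite filtration.

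The crux is to identify the graded pieces with Frobenius twists of $M/VM$. First I would record that the exactness axiom (3) of Definition \ref{definition: cosmooth module} is equivalent to $\ker(V^i) = \im(V^{n-i}) = V^{n-i}M$ for all $i \in \{0, \dots, n\}$ (the cases $i = 0, n$ being automatic from $V^n M = 0$). Using this, I claim that for $i \in \{0, \dots, n-1\}$ the assignment $\overline{m} \mapsto \overline{V^i m}$ defines an isomorphism $(M/VM)|_{\sigma^i} \xrightarrow{\sim} \gr^i_V M$. It is $D_R$-linear by the relation $V^i \sigma^i(a) = a V^i$, and it is clearly surjective; for injectivity, if $V^i m \in V^{i+1}M$, write $V^i m = V^{i+1} m'$, so that $V^i(m - Vm') = 0$ and hence $m - Vm' \in \ker(V^i) = V^{n-i}M \subseteq VM$ (using $n - i \geq 1$), which forces $m \in VM$. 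This is the only place the $n$-cosmoothness hypothesis is used, and it is where one must track the semilinearity of $V^i$ to place the twist $\sigma^i$ correctly, consistently with Lemma \ref{lem:mod}.

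With this identification in hand, the naturality relation $f(V^i m) = V^i f(m)$ shows that under these isomorphisms $\gr^i f$ corresponds to $\overline{f}|_{\sigma^i}$. Since $\overline{f}$ is an isomorphism, so is its restriction of scalars $\overline{f}|_{\sigma^i}$, and hence $\gr^i f$ is an isomorphism for every $i$. Finally I would conclude by descending induction on $i$: the short five lemma applied to the map between the short exact sequence $0 \to V^{i+1}M' \to V^i M' \to \gr^i_V M' \to 0$ and its analogue for $M$ shows that $f \colon V^i M' \to V^i M$ is an isomorphism once $f \colon V^{i+1}M' \to V^{i+1}M$ and $\gr^i f$ are. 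The base case $V^n M' = V^n M = 0$ is trivial, and taking $i = 0$ yields that $f$ itself is an isomorphism.

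The main obstacle is the middle step, the identification $\gr^i_V M \cong (M/VM)|_{\sigma^i}$, since it is the only step invoking $n$-cosmoothness and the only one requiring care with the Frobenius twists; once it is established, both surjectivity and injectivity of $f$ follow from formal homological algebra on a bounded filtration. (As a sanity check, surjectivity alone is immediate: writing $C = \coker(f)$, right exactness of $N \mapsto N/VN$ gives $C/VC = \coker(\overline{f}) = 0$, and since $V$ is nilpotent on the quotient $C$ of $M$, this forces $C = 0$.)
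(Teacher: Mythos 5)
Your proposal is correct and follows essentially the same route as the paper: the paper's proof is exactly the commutative square comparing $(M/VM)|_{\sigma^i} \xrightarrow{V^i} V^iM/V^{i+1}M$ for $M'$ and $M$, where the vertical maps are isomorphisms by $n$-cosmoothness, followed by passage along the finite $V$-adic filtration. The only difference is that you spell out the verification that $V^i$ induces an isomorphism onto $\gr^i_V M$ (using $\ker(V^i) = \im(V^{n-i}) \subseteq VM$) and the five-lemma induction, details the paper leaves implicit.
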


\begin{proof}
Since $M$ and $M'$ are both $n$-cosmooth, for $i \leq n-1$ we have a commutative square 
\[\begin{tikzcd}
	{(M'/VM')|_{\sigma^i}} & {(M/VM)|_{\sigma^i}} \\
	{V^iM'/V^{i+1}M'} & {V^iM/V^{i+1}M}
	\arrow["f", from=1-1, to=1-2]
	\arrow["{V^i}", from=1-1, to=2-1]
	\arrow["{V^i}", from=1-2, to=2-2]
	\arrow["f", from=2-1, to=2-2]
\end{tikzcd}\]
where the vertical maps are isomorphisms. Since $f$ induces an isomorphism $M'/VM' \to M/VM$, $f$ also induces an isomorphism on $V$-adic associated graded, so $f$ is an isomorphism.
\end{proof}

\subsection{Lifting of Dieudonné modules}
In this subsection, we give a local presentation of $n$-cosmooth modules and apply this to prove that Zariski locally, we may lift any $n$-cosmooth module to an $\infty$-cosmooth module.

Recall from Definition \ref{definition: rank of n-cosmooth module} that the rank of an $n$-cosmooth $D_R$-module $M$ is defined to be the rank of $M/VM$ as an $R$-module. The following proposition then gives a Zariski-local description of $n$-cosmooth modules which may be thought of as a truncated version of the notion of structure equations for a Cartier module, c.f. \cite[Chapter IV.8]{Zin84}.

\begin{proposition}\label{prop:n-cosmooth local}
Let $M$ be a $D_R$-module.  Then $M$ is $n$-cosmooth of rank $r$ if and only if Zariski locally on $\Spec R$, there is a short exact sequence
\begin{equation*}
0 \to T \overset{u}\to L \to M \to 0
\end{equation*}
where $T$ and $L$ are free $\mathbb{E}_n$-modules of rank $r$ with bases $h_1, \dots, h_r$ and $g_1, \dots, g_r$, respectively, and 
\begin{equation*}
u(h_i) := Fg_i - \sum_{j = 1}^r a_{ij}(V) g_j
\end{equation*}
where $a_{ij}(V) \in W(R)\{V\}$.
\end{proposition}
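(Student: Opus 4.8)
The plan is to reduce both directions to a single key claim: for any map $u\colon T\to L$ of the stated form, the cokernel $M:=\coker(u)$ is $n$-cosmooth of rank $r$ and $u$ is injective. Granting this, the ``if'' direction is immediate, and for the ``only if'' direction I would argue as follows. Working Zariski-locally, choose a basis $\bar e_1,\dots,\bar e_r$ of the projective module $M/VM$ and lift it to elements $e_1,\dots,e_r\in M$; by Lemma~\ref{lem:generators} these generate $M$ over $W(R)\{V\}$, so the $\mathbb{E}_n$-linear map $L\to M$, $g_i\mapsto e_i$, is surjective. Since $F$ descends to a $\sigma$-semilinear operator on $M/VM$ (because $pM=VFM\subseteq VM$), we may write $F\bar e_i=\sum_j \bar a_{ij}\,\bar e_j$; lifting the $\bar a_{ij}$ to $W(R)$ and correcting by a $V$-divisible term (again using Lemma~\ref{lem:generators} to express the error in the $e_k$), we obtain $a_{ij}(V)\in W(R)\{V\}$ with $Fe_i=\sum_j a_{ij}(V)e_j$ in $M$. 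This defines $u$ with $\im(u)\subseteq\ker(L\to M)$, hence a surjection $\coker(u)\to M$ which is an isomorphism modulo $V$ (both sides are free of rank $r$ on the images of the $g_i$, resp.\ $e_i$); by the key claim $\coker(u)$ is $n$-cosmooth, so Lemma~\ref{lemma: isomorphism mod V} upgrades this to an isomorphism, and the asserted short exact sequence follows.

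It remains to prove the key claim, which I would break into three parts. First, $V^n=0$ on $M$ because $V^n$ already annihilates the $\mathbb{E}_n$-module $L$. Second, that $M/VM$ is free of rank $r$: by right-exactness $M/VM=\coker(\bar u)$, where $\bar u\colon T/VT\to L/VL$ is the reduction, and $\mathbb{E}_n/V\mathbb{E}_n$ is the twisted polynomial ring in $F$ over $R$ (with $Fx=x^pF$) by the normal form for $\mathbb{E}$ recorded in Lemma~\ref{lemma: V is injective on E}. Under this identification $\bar u$ is ``multiplication by $F\cdot I_r-\bar A$'' for the matrix $\bar A=(\bar a_{ij}(0))$ of reduced constant terms, and I would run the usual leading-term argument for a monic degree-one operator: the elements $F^s\bar u(h_i)$ have distinct leading terms $F^{s+1}\bar g_i$, so together with $\bar g_1,\dots,\bar g_r$ they form an $R$-basis of $L/VL$, whence $\coker(\bar u)$ is free on $\bar g_1,\dots,\bar g_r$. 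This simultaneously shows $\bar u$ is injective, so by Lemma~\ref{lem:Einj} the map $u$ itself is injective.

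Third, and most importantly, the exactness conditions $\ker(V^i)=\im(V^{n-i})$ on $M$ for $1\le i\le n-1$. Here I would exploit that the free modules $T$ and $L$ inherit from $\mathbb{E}_n$ the identities $\ker(V^{n-i})=V^iT$ and $\ker(V^{i})=V^{n-i}L$ of Lemma~\ref{lem:mod}, together with the injectivity of $u$. Concretely, given $m\in M$ with $V^im=0$, lift $m$ to $\ell\in L$, so $V^i\ell=u(t)$ for some $t\in T$; applying $V^{n-i}$ and using $V^nL=0$ with injectivity of $u$ yields $V^{n-i}t=0$, hence $t=V^it'$; then $V^i(\ell-u(t'))=0$ forces $\ell-u(t')\in V^{n-i}L$, so $m\in\im(V^{n-i})$. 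The reverse inclusion is immediate from $V^n=0$, and the rank computation is supplied by the second step.

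The main obstacle I anticipate is the bookkeeping in the second step: pinning down the isomorphism $\mathbb{E}_n/V\mathbb{E}_n\cong R\langle F\rangle$ from the normal form, and verifying carefully that the leading-term/triangularity argument for the cokernel of a monic degree-one twisted-polynomial operator genuinely produces a free module of rank exactly $r$ (rather than some unexpected collapse), keeping track of the semilinear twist throughout. By contrast the exactness conditions, which at first glance look like the delicate point, reduce to the clean diagram chase above once the injectivity of $u$ and the $\mathbb{E}_n$-module identities of Lemma~\ref{lem:mod} are in hand.
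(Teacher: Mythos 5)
Your proposal is correct and follows essentially the same route as the paper: the paper likewise deduces the ``only if'' direction from the ``if'' direction (structure equations from Lemma~\ref{lem:generators}, then Lemma~\ref{lemma: isomorphism mod V}), and proves the ``if'' direction via $V^n=0$, the identification $M/VM \cong \mathbb{E}_1^r/(F-\bar A) \cong R^r$, injectivity of $u$ through Lemma~\ref{lem:Einj} (the paper's Lemma~\ref{lem:inj}), and a snake-lemma argument using Lemma~\ref{lem:mod}. The only differences are presentational: you unroll the snake lemma as an explicit chase and spell out the leading-term/triangularity argument that the paper leaves as a one-line assertion.
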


\begin{remark}
    Note that the presentation $u(h_i) = Fg_i - \sum_{j=1}^r a_{ij}(V)g_j$ depends only on the image of $a_{ij}(V)$ in $\mathbb{E}_n$.
\end{remark}

Before proving Proposition \ref{prop:n-cosmooth local}, we establish the following lemma on injectivity of the map $u$ used to present an $n$-cosmooth module. 

\begin{lemma} \label{lem:inj}
Let $T$ and $L$ be free $\mathbb{E}_n$-modules of rank $r$ with bases $h_i$ and $g_i$ respectively, and consider a map $u: T \to L$ defined by $u(h_i) = Fg_i - \sum_j a_{ij}(V)g_j$, where $a_{ij}(V) \in W(R)\{V\}$.  Then $u$ is injective. 
\end{lemma}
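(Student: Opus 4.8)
The plan is to reduce injectivity of $u$ to a statement modulo $V$, where the truncated Cartier ring becomes a twisted polynomial ring in $F$ and injectivity follows from a degree count. By Lemma \ref{lem:Einj}, it suffices to show that the induced map $\ol{u}\colon T/VT \to L/VL$ is injective. Since $T$ and $L$ are free $\mathbb{E}_n$-modules on $h_1,\dots,h_r$ and $g_1,\dots,g_r$, we have $T/VT = (\mathbb{E}_n/V\mathbb{E}_n)^r$ and likewise for $L$, and $\ol{u}$ is the left $(\mathbb{E}_n/V\mathbb{E}_n)$-linear map determined by the reductions of the $u(h_i)$.

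The next step is to identify the base ring. Because $V^n\mathbb{E} \subseteq V\mathbb{E}$, we have $\mathbb{E}_n/V\mathbb{E}_n = \mathbb{E}/V\mathbb{E}$. Using the normal form $\sum_{r,s\geq 0} V^r[x_{rs}]F^s$ of \cite[Definition/Theorem 4.17]{Zin84} (as in the proof of Lemma \ref{lemma: V is injective on E}), reduction modulo $V\mathbb{E}$ kills every term with $r \geq 1$; in particular $p = VF \equiv 0$, and the composite $W(R) \to \mathbb{E} \to \mathbb{E}/V\mathbb{E}$ factors through the ring quotient $W(R)/VW(R) \cong R$. I would then check, via uniqueness of the normal form, that $\mathbb{E}/V\mathbb{E}$ is the twisted polynomial ring $R\{F\}$: it is free as a left $R$-module on the powers $F^s$ ($s \geq 0$), with $Fc = c^p F$ for $c \in R$ (as $\sigma$ reduces to the absolute Frobenius on $R$). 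In particular every element of $R\{F\}$ has a well-defined $F$-degree. Writing $a_{ij}(V) = a_{ij}^{(0)} + V(\cdots)$ with constant term $a_{ij}^{(0)} \in W(R)$ and reducing the defining formula for $u$, I obtain
\[
\ol{u}(h_i) = F g_i - \sum_{j=1}^r \beta_{ij}\, g_j,
\]
where $\beta_{ij} \in R$ is the image of $a_{ij}^{(0)}$ under $W(R) \to R$.

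Finally I would run the degree count. Suppose $\sum_i \xi_i h_i \in \ker \ol{u}$ with $\xi_i \in R\{F\}$; then for every $j$ we have $\xi_j F = \sum_{i} \xi_i \beta_{ij}$ in $R\{F\}$. Right multiplication by $F$ sends $\sum_s c_s F^s$ to $\sum_s c_s F^{s+1}$, raising the $F$-degree by exactly one and preserving the leading coefficient; right multiplication by a scalar $\beta_{ij} \in R$ sends $\sum_s c_s F^s$ to $\sum_s c_s \beta_{ij}^{p^s} F^s$ and hence does not raise the $F$-degree. If some $\xi_i \neq 0$, set $d = \max_i \deg_F \xi_i \geq 0$. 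Then each right-hand side has $F$-degree at most $d$, while the left-hand side $\xi_j F$ has degree $\deg_F \xi_j + 1$ whenever $\xi_j \neq 0$; this forces $\deg_F \xi_j \leq d-1$ for all $j$, contradicting the maximality of $d$. Hence all $\xi_i = 0$, so $\ol{u}$ is injective, and by Lemma \ref{lem:Einj} so is $u$. Note that the degree count works over an arbitrary $R$, since the leading coefficient is merely relabeled under right multiplication by $F$ and no cancellation can occur.

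I expect the only delicate point to be the identification of $\mathbb{E}/V\mathbb{E}$ with the twisted polynomial ring $R\{F\}$: one must invoke uniqueness of the normal form to see that the $F^s$ form a free left $R$-basis, and must observe that $R$ sits inside via the ring surjection $W(R) \to W(R)/VW(R) \cong R$ rather than via the (non-additive) Teichmüller lift. Once a well-defined $F$-degree is in hand, the remaining degree count is routine.
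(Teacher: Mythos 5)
Your proposal is correct and follows essentially the same route as the paper's proof: reduce via Lemma \ref{lem:Einj} to the situation modulo $V$, identify the quotient ring with the twisted polynomial ring $R\{F\}$, and conclude by a degree count in $F$. The paper states this more tersely (immediately taking $n=1$ and writing coefficients in $R\{F\}$), whereas you spell out the identification $\mathbb{E}_n/V\mathbb{E}_n \cong R\{F\}$ and the degree argument in full, but the underlying argument is identical.
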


\begin{proof}
By Lemma \ref{lem:Einj}, it suffices to consider $n = 1$. Then if $t = \sum_i r_i h_i$ is a general element of $T$ with $r_i \in R\{F\}$, we can write $u(t) = \sum_i c_i g_i$ with
\begin{equation*}
c_i = r_iF - \sum_j r_j a_{ji}. 
\end{equation*}
By considering the $F$-degree of the right-hand side, we find that $c_i = 0$ for all $i$ implies $r_i = 0$ for all $i$, and therefore $u: T \to L$ is injective.
\end{proof}

\begin{proof}[Proof of Proposition \ref{prop:n-cosmooth local}]
First, suppose that $M = L/u(T)$ where $u$ is as in the statement. Then $V^n = 0$ on $M$ since $M$ is an $\mathbb{E}_n$-module.  Moreover
\begin{equation*}
M/VM = \mathbb{E}_1^r/(F - (\ol{a_{ij}(V)})) \cong R^r
\end{equation*}
is a free $R$-module of rank $r$.  For all $i \in \{1, \dots, n-1\}$, Lemma \ref{lem:mod} implies $u: T/V^iT \to L/V^iL$ is injective, so applying the Snake lemma to multiplication by $V^i$ on the short exact sequence $0 \to T \to L \to M \to 0$ implies 
\begin{equation*}
M[V^i] = \frac{L[V^i]}{T[V^i]}.
\end{equation*}
But by Lemma \ref{lem:mod}, $L[V^i] = V^{n-i}L$, and the image of $V^{n-i}L$ in $M$ is $V^{n-i}M$, so $M[V^i] \subseteq V^{n-i}M$ and hence $M$ is $n$-cosmooth.

Conversely suppose that $M$ is $n$-cosmooth over $R$ of rank $r$. Working locally, assume that $M/VM$ is a free $R$-module. 
Let $e_1, \dots, e_r \in M$ be such that their images in $M/VM$ form a basis over $R$. By Lemma \ref{lem:generators}, $M$ is generated by $e_1,\ldots, e_r$ over $W(R)\{V\}$, so we can write
\begin{equation*}
Fe_i = \sum_j a_{ij}(V) e_j,
\end{equation*}
where $a_{ij}(V) \in W(R)\{V\}$. Let $T$ and $L$ be free $\mathbb{E}_n$-modules of rank $r$ with bases $h_i$ and $g_i$, respectively, and define $u: T \to L$ by $u(h_i) = Fg_i - \sum_j a_{ij}(V)g_j$. Lemma \ref{lem:inj} shows that the map $u$ is injective, and the implication proved above shows that $M' := \coker(u)$ is $n$-cosmooth.
By definition $g_i \mapsto e_i$ defines a map $f: M' \to M$. 
Note that $M'/VM'$ and $M/VM$ are free $R$-modules with bases $g_1,\ldots, g_r$ and $e_1,\ldots, e_r$, so that the induced map $\ol{f}: M'/VM' \to M/VM$ is an isomorphism.  Thus $f$ is an isomorphism by Lemma \ref{lemma: isomorphism mod V}, and we are done. 
\end{proof}

\begin{proposition}\label{proposition: lift n-cosmooth module}
Let $\ol{M}$ be an $n$-cosmooth $D_R$-module of rank $r$. Then Zariski locally on $\Spec R$ there exists an $\infty$-cosmooth $\mathbb{E}_R$-module $M$ of rank $r$ such that $M/V^nM \cong \ol{M}$.
\end{proposition}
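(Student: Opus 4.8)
The plan is to lift the local presentation of Proposition~\ref{prop:n-cosmooth local} from $\mathbb{E}_n$ to the full Cartier ring $\mathbb{E}$. Working Zariski-locally so that Proposition~\ref{prop:n-cosmooth local} applies, I write $\ol M = \coker(u \colon T \to L)$ with $T, L$ free $\mathbb{E}_n$-modules of rank $r$ on bases $h_i, g_i$ and $u(h_i) = Fg_i - \sum_j a_{ij}(V) g_j$ with $a_{ij}(V) \in W(R)\{V\}$. Since the coefficients $a_{ij}(V)$ lie in $W(R)\{V\}$, which maps to $\mathbb{E}$, I take free $\mathbb{E}$-modules $\widetilde T, \widetilde L$ of rank $r$ on bases $h_i, g_i$, define $\widetilde u \colon \widetilde T \to \widetilde L$ by the \emph{same} formula $\widetilde u(h_i) = Fg_i - \sum_j a_{ij}(V) g_j$, and set $M := \coker(\widetilde u)$. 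By construction $\widetilde u \bmod V^m$ is the $\mathbb{E}_m$-linear map $u_m$ of the same shape, which is injective by Lemma~\ref{lem:inj}.

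Next I would establish three facts. First, the Artin--Rees-type identity $\widetilde u(\widetilde T) \cap V^m \widetilde L = V^m \widetilde u(\widetilde T)$ for all $m$: if $\widetilde u(t) \in V^m \widetilde L$ then $u_m(t \bmod V^m) = 0$, so $t \in V^m \widetilde T$ by injectivity of $u_m$, whence $\widetilde u(t) \in \widetilde u(V^m \widetilde T) = V^m \widetilde u(\widetilde T)$ by $\mathbb{E}$-linearity; injectivity of $\widetilde u$ then follows from $\bigcap_m V^m \widetilde T = 0$. Second, $M/V^n M \cong \ol M$: both are canonically $\widetilde L/(\widetilde u(\widetilde T) + V^n \widetilde L)$, since the image of $u(T)$ in $L = \widetilde L/V^n \widetilde L$ is exactly the image of $\widetilde u(\widetilde T)$. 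Third, $V$ is injective on $M$: applying the snake lemma to multiplication by $V$ on $0 \to \widetilde T \to \widetilde L \to M \to 0$ and using injectivity of $V$ on the free modules $\widetilde T, \widetilde L$ gives $M[V] = \ker(u_1 \colon \widetilde T/V\widetilde T \to \widetilde L/V\widetilde L) = 0$, again by Lemma~\ref{lem:inj}.

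It then remains to prove $V$-adic completeness $M \cong \varprojlim_m M/V^m M$. Using the identity $\widetilde u(\widetilde T) \cap V^m \widetilde L = V^m \widetilde u(\widetilde T)$, the exact sequences $0 \to \widetilde u(\widetilde T)/V^m \widetilde u(\widetilde T) \to \widetilde L/V^m \widetilde L \to M/V^m M \to 0$ assemble into a short exact sequence of inverse systems whose left-hand system is isomorphic to $\{\widetilde T/V^m \widetilde T\}$ via $\widetilde u$, hence has surjective transition maps. Passing to the limit, the $\varprojlim^1$ term vanishes by Mittag-Leffler and the first two limits recover $\widetilde T \hookrightarrow \widetilde L$ (both complete), so the limit sequence is $0 \to \widetilde T \to \widetilde L \to \varprojlim_m M/V^m M \to 0$; comparing with $0 \to \widetilde T \to \widetilde L \to M \to 0$ shows $M \to \varprojlim_m M/V^m M$ is an isomorphism. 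Since $V^n M \subseteq VM$, we also get $M/VM \cong (M/V^n M)/V(M/V^n M) \cong \ol M/V\ol M$, which is finite locally free of rank $r$; thus $M$ is $\infty$-cosmooth of rank $r$ with $M/V^n M \cong \ol M$, as desired.

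I expect the main obstacle to be the completeness step: one must first control the intersection $\widetilde u(\widetilde T) \cap V^m \widetilde L$ and then run the $\varprojlim/\varprojlim^1$ argument, whereas the injectivity of $\widetilde u$ and the identification $M/V^n M \cong \ol M$ are essentially formal once the lifted presentation is in place.
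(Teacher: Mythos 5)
Your construction is exactly the paper's: lift the structure-equation presentation of Proposition~\ref{prop:n-cosmooth local} verbatim to free modules over the full Cartier ring $\mathbb{E}$ and take $M$ to be the cokernel, noting $M/V^nM \cong \ol M$ is immediate from the compatible presentations. The difference is entirely in how the key verification is done: the paper disposes of injectivity of the lifted map and $V$-reducedness of its cokernel in one line by citing \cite[Lemma 4.37]{Zin84}, whereas you reprove that lemma in this special case by hand --- injectivity and the Artin--Rees identity $\widetilde u(\widetilde T) \cap V^m\widetilde L = V^m\widetilde u(\widetilde T)$ via Lemma~\ref{lem:inj} applied over $\mathbb{E}_m$ for every $m$ (legitimate, since that lemma's inductive proof works for all $m$, and the coefficients $a_{ij}(V) \in W(R)\{V\}$ make sense in every $\mathbb{E}_m$), then $M[V]=0$ by the snake lemma, and completeness by a Mittag--Leffler/$\varprojlim^1$ argument. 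Your argument is correct; the only unstated inputs are that $\mathbb{E}$ is $V$-adically complete and $\bigcap_m V^m\mathbb{E} = 0$, both of which follow from the unique-representation theorem \cite[Definition/Theorem 4.17]{Zin84} already invoked in Lemma~\ref{lemma: V is injective on E}, so free $\mathbb{E}$-modules of finite rank are indeed complete and separated. The trade-off: the paper's citation keeps the proof short, while your version makes the proposition self-contained given the structure theory of $\mathbb{E}$, and correctly identifies completeness of the cokernel as the one step that is not formal.
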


\begin{proof}
Let 
\begin{equation*}
0 \to T \overset{u}\to L \to \ol{M} \to 0
\end{equation*}
be a local presentation of $\ol{M}$ as in Proposition \ref{prop:n-cosmooth local}. Let $T', L'$ be free $\mathbb{E}$-modules with bases $h_i'$ and $g_i'$, respectively.  Then we let $M$ be the $\mathbb{E}$-module defined by $M := \coker(u': T' \to L')$, where $u'(h_i') = Fg_i' - \sum a_{ij}(V)g_j'$.  By \cite[Lemma 4.37]{Zin84}, $T' \to L'$ is injective with $V$-reduced cokernel, so we conclude that the sequence
\begin{equation*}
0 \to T' \overset{u'}\to L' \to M \to 0
\end{equation*}
is exact and that $M$ is $V$-reduced. By definition $M/V^nM \cong \ol{M}$, so $M$ has rank $r$.
\end{proof}

\subsection{Essential image of the Dieudonné module functor} \label{subsection: essential image}

By Corollary \ref{corollary: image is n-cosmooth}, if $G/\Spec R$ is an $n$-cosmooth group scheme, then $\Mn(G)$ is an $n$-cosmooth $D_R$-module.  In this subsection, we show that every $n$-cosmooth $D_R$-module arises in this way.  Our strategy is to combine Proposition \ref{proposition: lift n-cosmooth module} with Cartier theory, and for that we make the following definition.

\begin{definition}
A group scheme $H$ over $R$ is said to be \emph{$\infty$-cosmooth} if it is Cartier dual to a formal Lie group over $R$, i.e. if we have an isomorphism $H \cong \underline{\Hom}(G, \mathbb{G}_m)$ for $G$ a formal Lie group over $R$.
\end{definition}

\begin{lemma}\label{lemma: ses for infty cosmooth}
    Let $H$ be an $\infty$-cosmooth group scheme.
    Then the short exact sequence $0 \to W|_{\sigma^n} \to W \to W_n \to 0$ induces a short exact sequence of $\mathbb{E}$-modules
\[\begin{tikzcd}
	0 & {\Hom(H,W|_{\sigma^n})} & {\Hom(H,W)} & {\Hom(H,W_n)} & 0
	\arrow[from=1-1, to=1-2]
	\arrow[from=1-2, to=1-3]
	\arrow[from=1-3, to=1-4]
	\arrow[from=1-4, to=1-5]
\end{tikzcd}\]
\end{lemma}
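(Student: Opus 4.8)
The plan is to observe that $\Hom_{\AbR}(H,-)$ is left exact, so applying it to the short exact sequence $0 \to W|_{\sigma^n} \xrightarrow{V^n} W \to W_n \to 0$ of fppf sheaves immediately yields exactness of the claimed sequence at its left and middle terms; here the first map is postcomposition with the inclusion $V^n\colon W|_{\sigma^n}\to W$ of the image of $V^n$, the $\sigma^n$-twist being exactly what makes this map $\mathbb{E}$-linear, as in Lemma \ref{lem:mod}. Thus the entire content of the lemma is the \emph{surjectivity} of $\Hom(H,W)\to\Hom(H,W_n)$, and the identification of the kernel as $\Hom(H,W|_{\sigma^n})$ then follows formally. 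By Theorem \ref{thm:Cartier theory}, $\Hom(H,W)$ is the $\infty$-cosmooth Cartier module of the formal Lie group $G$ for which $H\cong G^\vee$; in particular $V$ acts injectively, consistent with the first map being $V^n$.

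To prove surjectivity I would reduce to finite cosmooth group schemes. Since $G$ is a formal Lie group, $G=\varinjlim_k G[F^k]$ with each $G[F^k]$ being $k$-smooth (Lemma \ref{lemma: formal Lie}), so dualizing gives $H\cong G^\vee=\varprojlim_k H_k$ where $H_k:=(G[F^k])^\vee$ is $k$-cosmooth and the transition maps $H_{k+1}\to H_k$ are surjective. Because the coordinate ring $\OO(W_m)$ is a finitely generated $R$-algebra while $\OO(H)=\colim_k \OO(H_k)$, every homomorphism $H\to W_m$ factors through some $H_k$; hence $\Hom(H,W_m)=\varinjlim_k \Hom(H_k,W_m)$, the colimit being taken along the injective precomposition maps.

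Next I would show each transition $\Hom(H,W_{m+1})\to\Hom(H,W_m)$ is surjective. Given a class in $\Hom(H,W_m)$, represent it by some $\phi_k\in\Hom(H_k,W_m)$ with $k\geq m+1$, enlarging $k$ along the injective transition maps if necessary. By Lemma \ref{lemma: maps into smaller Witt}, $\phi_k$ factors through the $(m+1)$-cosmooth quotient $Q$ of $H_k$, and Corollary \ref{corollary: exact into Wn} applied to $Q$ (with $j=1$) shows $\Hom(Q,W_{m+1})\to\Hom(Q,W_m)$ is surjective; lifting $\phi_k$ and passing to the colimit produces the required preimage. Finally, since $W=\varprojlim_m W_m$ and $\Hom(H,-)$ preserves limits, $\Hom(H,W)=\varprojlim_m \Hom(H,W_m)$ is the inverse limit of a tower with surjective transition maps, so the projection $\Hom(H,W)\to\Hom(H,W_n)$ is surjective, completing the argument.

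The main obstacle is the passage to finite levels: one must justify that $\Hom(H,-)$ commutes with the cofiltered limit presenting $H$ (which rests on $W_m$ having a finitely generated coordinate ring and on surjectivity of the $H_{k+1}\to H_k$) and carefully track the Frobenius twist $\sigma^n$ so that the boundary term is correctly identified as the $V^n$-image. Once the reduction to $k$-cosmooth $H_k$ is in place, the surjectivity of the Witt-truncation maps is precisely the finite-level statement already furnished by Corollary \ref{corollary: exact into Wn}.
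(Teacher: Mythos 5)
Your proposal is correct, and its skeleton is the same as the paper's: left exactness reduces everything to surjectivity of $\Hom(H,W)\to\Hom(H,W_n)$; since $\Hom(H,-)$ preserves the limit $W=\varprojlim_m W_m$, this follows from surjectivity of each $\Hom(H,W_{m+1})\to\Hom(H,W_m)$; and that finite-level surjectivity comes from Corollary \ref{corollary: exact into Wn} applied to an $(m+1)$-cosmooth quotient. The one place you diverge is the reduction to finite levels. You factor a given map $H\to W_m$ through some $H_k=(G[F^k])^\vee$ via a finite-generation argument on coordinate rings ($\OO(W_m)$ finitely generated, $\OO(H)=\varinjlim_k\OO(H_k)$), and you correctly flag this as the main point needing justification --- indeed one must still check that the factored ring map $\OO(W_m)\to\OO(H_k)$ is a map of Hopf algebras, which requires a flatness argument. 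The paper shows this obstacle is avoidable: the Verschiebung argument of Lemma \ref{lemma: maps into smaller Witt} applies verbatim to $H$ itself, since $V^r=0$ on $W_m$ for $r\geq m$, so by naturality of $V$ any homomorphism $f\colon H\to W_m$ kills the image of $V^r$ and hence factors through $H_r=\coker(V^r)$; combined with $H\to H_r$ being an epimorphism of fppf sheaves, this gives an isomorphism $\Hom(H_r,W_m)\cong\Hom(H,W_m)$ at the single level $r=m+1$, with no colimit over $k$, no finite generation of $\OO(W_m)$, and no Hopf-algebra compatibility check. Note also that your $Q$ (the $(m+1)$-cosmooth quotient of $H_k$) is precisely $H_{m+1}$, so after your factoring step the two arguments coincide; the paper's route simply replaces your ``main obstacle'' with a one-line application of the Verschiebung trick.
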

\begin{proof}
    To show the lemma, it suffices to show that $\Hom(H,W) \to \Hom(H,W_n)$ is surjective.
    Since $W = \varprojlim_m W_m$ as a group scheme, by induction it suffices to show $\Hom(H,W_{m+1}) \to \Hom(H,W_m)$ is surjective.
    Since $H^\vee$ is a formal Lie group, $H^\vee = \varinjlim_{r} H^\vee[F^r]$, so $H = \varprojlim_r H_r$ where $H_r$ is the cokernel of $V^r$.
	By Lemma \ref{lemma: maps into smaller Witt}, if $r \geq m$, then pullback along $H \to H_r$ induces an isomorphism
	\[ \Hom(H_{r}, W_m) \cong \Hom(H,W_m)\]
    By Corollary \ref{corollary: exact into Wn}, the map 
    $\Hom(H_{m+1},W_{m+1}) \to \Hom(H_{m+1},W_{m})$
    is surjective, and thus the map $\Hom(H,W_{m+1}) \to \Hom(H,W_m)$ is surjective.
\end{proof}

\begin{theorem} \label{theorem: essentially surjective}
The functor $\Mn$ on $n$-cosmooth group schemes has essential image the $n$-cosmooth $D_R$-modules.
\end{theorem}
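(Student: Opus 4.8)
The plan is to prove essential surjectivity by reducing to a Zariski-local computation via Cartier theory, and then to glue using the full faithfulness already established in Theorem \ref{theorem: isomorphism of unit}. One direction is already available: Corollary \ref{corollary: image is n-cosmooth} shows that $\M_n$ lands among $n$-cosmooth $D_R$-modules, so the content is to realize an arbitrary $n$-cosmooth module $\ol{M}$ as $\M_n(G)$ for some $n$-cosmooth group scheme $G$.

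First I would treat the case where $\ol{M}$ lifts to an $\infty$-cosmooth module. By Proposition \ref{proposition: lift n-cosmooth module}, such a lift $M$ with $M/V^nM \cong \ol{M}$ exists Zariski-locally on $\Spec R$. Applying Cartier theory (Theorem \ref{thm:Cartier theory}) produces a formal Lie group whose Cartier dual $H$ is an $\infty$-cosmooth group scheme with $\Hom(H,W) \cong M$ as $\mathbb{E}$-modules. I then set $G := \coker(V^n \colon H^{(p^n)} \to H)$. Dualizing identifies $G^\vee$ with $H^\vee[F^n]$, which is $n$-smooth by Lemma \ref{lemma: formal Lie}, so $G$ is $n$-cosmooth. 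To compute its Dieudonn\'e module, I would observe, exactly as in the proof of Lemma \ref{lemma: ses for infty cosmooth} (using the argument of Lemma \ref{lemma: maps into smaller Witt}), that pullback along $H \to G$ induces an isomorphism $\Hom(G,W_n) \cong \Hom(H,W_n)$, and that Lemma \ref{lemma: ses for infty cosmooth} identifies $\Hom(H,W_n)$ with $\Hom(H,W)/V^n\Hom(H,W) = M/V^nM \cong \ol{M}$. Thus $\M_n(G) \cong \ol{M}$ locally.

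To globalize, I would choose a finite Zariski cover $\{\Spec R_i\}$ of $\Spec R$ over which the above construction applies, producing $n$-cosmooth group schemes $G_i$ together with isomorphisms $\theta_i \colon \M_n(G_i) \xrightarrow{\sim} \ol{M}|_{R_i}$. Since $\M_n$ is compatible with Zariski localization, on each overlap the composite $\theta_j^{-1} \circ \theta_i$ is an isomorphism $\M_n(G_i)|_{R_{ij}} \cong \M_n(G_j)|_{R_{ij}}$, which by full faithfulness (Theorem \ref{theorem: isomorphism of unit}) descends to a unique isomorphism $\varphi_{ij} \colon G_i|_{R_{ij}} \cong G_j|_{R_{ij}}$. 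The cocycle identity $\varphi_{jk} \circ \varphi_{ij} = \varphi_{ik}$ can be checked after applying the faithful functor $\M_n$, where it reduces to the corresponding identity among the $\theta_i$. Descent for finite locally free group schemes then glues the $G_i$ into a single $n$-cosmooth group scheme $G/\Spec R$ with $\M_n(G) \cong \ol{M}$.

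The main obstacle I anticipate is the globalization step: one must verify that $\M_n$ is compatible with Zariski localization, so that the gluing data for $\ol{M}$ transports to gluing data for the $G_i$, and that full faithfulness genuinely lets the isomorphisms and the cocycle condition be pulled back from modules to group schemes. The local step is comparatively mechanical once the conventions relating the $\infty$-cosmooth group scheme $H$, its Cartier-dual formal Lie group, and the truncation $\coker(V^n)$ are pinned down; the one delicate point there is matching the $\mathbb{E}$-module identification supplied by Cartier theory with the Verschiebung-shift description of $W_n$ underlying Lemma \ref{lemma: ses for infty cosmooth}.
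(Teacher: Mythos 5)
Your proposal is correct and follows essentially the same route as the paper: Zariski-locally lift $\ol{M}$ to an $\infty$-cosmooth module (Proposition \ref{proposition: lift n-cosmooth module}), realize it via Cartier theory (Theorem \ref{thm:Cartier theory}) as $\Hom(H,W)$ for an $\infty$-cosmooth group scheme $H$, take $G = \coker(V^n)$ and compute $\Mn(G) \cong \ol{M}$ using Lemma \ref{lemma: ses for infty cosmooth}, then glue using full faithfulness (Theorem \ref{theorem: isomorphism of unit}). The only difference is in the gluing mechanics: the paper applies $\Gn$ to the descent data for $\ol{M}$ (for which compatibility with localization is formal, since $\Gn$ is a Hom-functor) and invokes the stack property from Lemma \ref{lemma: finite type algebraic stack}, whereas you transport gluing data through $\Mn$, which requires the localization-compatibility you flag --- this is exactly Proposition \ref{proposition: base change}, which the paper proves later without circularity.
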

\begin{proof}
    Let $\ol M$ be an $n$-cosmooth $D_R$-module. 
	By Proposition \ref{proposition: lift n-cosmooth module}, there is a Zariski open cover $U \to \Spec R$ and an $\infty$-cosmooth $\mathbb{E}_U$-module $M$ such that $M/V^nM \cong \ol M_U$.
	By Cartier theory (Theorem \ref{thm:Cartier theory}), $M = \Hom(H, W)$ for some $\infty$-cosmooth group scheme $H/U$.
	Define $G_U$ to be the cokernel of $V^n$ on $H$.
    Then $G_U/U$ is $n$-cosmooth, and by Lemma \ref{lemma: ses for infty cosmooth}, 
    \[\Hom(G_U,W_n) \cong \Hom(H,W_n) \cong \Hom(H,W)/V^n\Hom(H,W)= M/ V^nM \cong \ol{M}_U.\]
	Thus $G_U/U$ is an $n$-cosmooth group scheme such that $\Mn(G_U) \cong \ol{M}_U$. By Theorem \ref{theorem: isomorphism of unit}, $\Mn$ is fully faithful, so $G_U \cong \Gn\Mn(G_U) \cong \Gn(M_U)$. Since $\ol{M}_U$ descends along  $U \to \Spec R$, apply $\Gn$ to the descent data to obtain descent data for $G_U$. By Lemma \ref{lemma: finite type algebraic stack}, $\Sm_n$ is a stack, so $G_U$ descends to an $n$-cosmooth group scheme $G/\Spec R$ with $\Mn(G) \cong M$.
\end{proof}

\begin{remark}\label{remark: n-smooth locally as kernel of Fn}
    The proof of Theorem \ref{theorem: essentially surjective} shows that if $G$ is an $n$-smooth group scheme over $R$, then Zariski locally on $\Spec R$, there is a formal Lie group $\tilde G$ such that $\tilde G[F^n] = G$. 
\end{remark}

\subsection{Relation between $n$-smooth Dieudonné theory and Cartier theory}
\label{remark: interpolation with Cartier theory}
    The statement of Theorem \ref{thm:equiv} 
    formally implies the main theorem of ($p$-typical) Cartier theory (Theorem \ref{thm:Cartier theory}). 
    Recall from Lemma \ref{lemma: formal Lie} that 
    \[ 
        \Sm_\infty \simeq \varprojlim_n \Sm_n.
    \]
    It is also true that 
    \[ \{ \infty\text{-cosmooth } \mathbb{E}_R\text{-modules}\} \simeq \varprojlim_n \{\text{$n$-cosmooth $D_R$-modules}\}\]
    where the functor sends $M \mapsto \{M/V^nM\}_{n \geq 1}$ (using that $M$ is $V$-adically complete).
    If $\mathcal{G}$ is a formal Lie group, then
    \begin{align*}
    \Hom(\mathcal{G}^\vee, W) = \varprojlim_n \Hom(\mathcal{G}^\vee, W_n) = \varprojlim_n \Hom(\mathcal{G}[F^n]^\vee, W_n) 
\end{align*}
where the second equality uses Lemma \ref{lemma: maps into smaller Witt}.
Thus the functor $\Hom(-^\vee,W)$ on $\Sm_\infty$ is the limit of the functors of Theorem \ref{thm:equiv}. 

Full faithfulness of the $n$-smooth Dieudonn\'e functor (Theorem \ref{theorem: isomorphism of unit}) therefore gives a new proof that the Cartier functor is fully faithful on $\Sm_\infty$.
However, our calculation of the essential image of $\Hom(-^\vee,W_n)$ on $\Sm_n$ (Theorem \ref{theorem: essentially surjective}) makes use of Cartier theory in its proof.

\section{Base Change and Dieudonn\'e Module Computations} \label{section: base change}

With Theorem \ref{thm:equiv} established, in this section we turn our attention to the compatibility of our Dieudonn\'e theory with base change.  We also make some explicit computations of Dieudonn\'e modules over an arbitrary base.

\subsection{Base change}
Let $R \to R'$ be a map of $\F_p$-algebras. We write $\M_{n, R}$ (resp. $\M_{n, R'}$) for the level $n$ Dieudonn\'e functor over $R$ (resp. $R'$) of Definition \ref{definition: level n Dieudonne}.  If $G/\Spec R$ is an $n$-cosmooth group scheme, base change induces a natural map
\begin{equation} \label{equation: base change}
D_{R'} \otimes_{D_R} \M_{n, R}(G) \to \M_{n, R'}(G_{R'}),
\end{equation}
of $D_{R'}$-modules, where $G_{R'} = G \times_{\Spec R} \Spec R'$ is the base change of $G$ to $R'$. 

\begin{proposition} \label{proposition: base change}
Formation of the Dieudonn\'e module for $n$-cosmooth group schemes commutes with arbitrary base change.  More precisely, for any $n$-cosmooth $G/\Spec R$ and morphism $R \to R'$ of $\F_p$-algebras, the comparison morphism (\ref{equation: base change}) is an isomorphism. 
\end{proposition}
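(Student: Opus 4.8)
The plan is to exploit that both modules in (\ref{equation: base change}) are $n$-cosmooth $D_{R'}$-modules, so that by Lemma \ref{lemma: isomorphism mod V} it suffices to check the comparison morphism is an isomorphism after reduction modulo $V$. The target is $n$-cosmooth by Corollary \ref{corollary: image is n-cosmooth}, since the base change of an $n$-smooth group scheme along $R \to R'$ is again $n$-smooth (immediate from Definition \ref{definition: n-smooth group}) and $G^\vee_{R'} = (G_{R'})^\vee$; the source is $n$-cosmooth as already noted, via the presentation of Proposition \ref{prop:n-cosmooth local}. Everything then reduces to understanding the two sides modulo $V$ and matching the induced map.

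For the target, Corollary \ref{corollary: image is n-cosmooth} applied over $R'$ gives a natural isomorphism $\M_{n,R'}(G^\vee_{R'})/V \cong \Lie(G_{R'})$. For the source, the key algebraic input is that reduction modulo $V$ commutes with base change of $D$-modules: for any left $D_R$-module $M$,
\[
(D_{R'} \otimes_{D_R} M)/V(D_{R'} \otimes_{D_R} M) \cong R' \otimes_R (M/VM).
\]
To prove this I would first use right-exactness of the tensor product to identify the left-hand side with $(D_{R'}/VD_{R'}) \otimes_{D_R} M$, by applying $-\otimes_{D_{R'}}(D_{R'}\otimes_{D_R}M)$ to the right-exact sequence $D_{R'} \xrightarrow{V} D_{R'} \to D_{R'}/VD_{R'} \to 0$ of right $D_{R'}$-modules, and likewise write $M/VM \cong (D_R/VD_R)\otimes_{D_R} M$. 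Since both sides are right-exact in $M$ and carry an evident natural transformation, it then suffices to treat the free case $M = D_R$, i.e.\ to produce an isomorphism $D_{R'}/VD_{R'} \cong R' \otimes_R (D_R/VD_R)$ of right $D_R$-modules. Here one uses that $W(R)$ acts on $D_R/VD_R$ through $R = W(R)/vW(R)$ (as $v(a) = VaF \in VD_R$) and that, by the normal form, $D_R/VD_R = \mathbb{E}_1$ is free as a left $R$-module on the classes of $F^s$ for $s \geq 0$; the right actions of $F$, $V$, and $W(R)$ then match on both sides. Applying this with $M = \M_{n,R}(G^\vee)$ and invoking Corollary \ref{corollary: image is n-cosmooth} over $R$ yields $(D_{R'}\otimes_{D_R}\M_{n,R}(G^\vee))/V \cong R' \otimes_R \Lie(G)$. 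Alternatively, and perhaps more cleanly, one can run this computation directly on a local presentation $0 \to T \to L \to \M_{n,R}(G^\vee) \to 0$ of Proposition \ref{prop:n-cosmooth local}: base change is right-exact and carries it to a presentation of the same shape over $R'$, which reduces modulo $V$ to exactly the presentation of $R' \otimes_R \Lie(G)$.

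It then remains to identify the reduction of (\ref{equation: base change}) modulo $V$ with a known isomorphism. Under the identifications above, the comparison morphism modulo $V$ becomes a map $R' \otimes_R \Lie(G) \to \Lie(G_{R'})$. Because the isomorphisms $\M_n(-)/V \cong \Hom(-,\mathbb{G}_a) = \Lie((-)^\vee)$ of Corollary \ref{corollary: image is n-cosmooth} are natural, and (\ref{equation: base change}) is by construction induced by the base-change map on homomorphisms $f \mapsto f_{R'}$, this reduced map is precisely the canonical base change morphism of Lemma \ref{lemma: Lie comparison}, which is an isomorphism. Hence (\ref{equation: base change}) is an isomorphism modulo $V$, and therefore an isomorphism by Lemma \ref{lemma: isomorphism mod V}.

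I expect the main obstacle to be the algebraic lemma asserting that reduction modulo $V$ commutes with base change, together with the bookkeeping required to verify that the reduced comparison map really is the Lie-algebra base change morphism of Lemma \ref{lemma: Lie comparison} rather than merely some abstract isomorphism. It is the naturality of all the intervening identifications that makes this last comparison go through, so care is needed to keep those isomorphisms canonical throughout.
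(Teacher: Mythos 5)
Your proposal is correct and follows essentially the same route as the paper: reduce to checking the comparison map modulo $V$ via Lemma \ref{lemma: isomorphism mod V}, identify both sides mod $V$ with Lie algebras via Corollary \ref{corollary: exact into Wn}, and conclude by Lemma \ref{lemma: Lie comparison}. The only difference is that you spell out the algebraic fact that reduction modulo $V$ commutes with base change of $D$-modules (via the identification $D_{R'}/VD_{R'} \cong R' \otimes_R (D_R/VD_R)$), a step the paper's proof leaves implicit, and your verification of it is sound.
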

\begin{proof}
    By definition, $\G_n$ is compatible with base change (for arbitrary $D_R$-modules). By Theorem \ref{thm:equiv}, 
    \[ 
    \M_{n,R}: \{\text{$n$-cosmooth group schemes over $\Spec R$}\} \to \{\text{$n$-cosmooth $D_R$-modules}\}^{op}
    \]
    is an equivalence with inverse $\G_{n,R}$, so by applying $\G_{n, R'}$ to $(\ref{equation: base change})$, we find that $\M_n$ is also compatible with base change.
\end{proof}

\subsection{Dieudonn\'e module computations}

To illustrate the explicit nature of our Dieudonn\'e theory, we make some direct computations of Dieudonn\'e modules over an arbitrary base.

\begin{example} \label{example: Z/p}
Consider $(\Z/p^n\Z)_R$, an $n$-cosmooth group scheme over $R$ with $i$-cosmooth quotient equal to $\Z/p^i\Z$ for all $1 \leq i \leq n$. Writing $p^n = F^n \circ V^n$ on the Witt vectors, we have
\begin{equation*}
\ul{\Hom}(\Z/p^n\Z, W_m) = W_m[p^n].
\end{equation*}
Thus the Dieudonn\'e module of $\Z/p^n\Z$ over $R$ is $W_n(R)$ with the standard action of $F$ and $V$.  For $m \leq n$, we get the expected Dieudonn\'e module of the $m$-cosmooth quotient $\Z/p^m\Z$, while for $m > n$ we pick up extra contributions from nilpotents in the ring $R$. This explains why in our formulation of Dieudonn\'e theory, we must work with the truncated Witt vectors as opposed to the group ind-scheme $\varinjlim_n W_n$.
\end{example}

\begin{example}
    Let $R= \Fp[\lambda]$ and let $\widehat{\mathbb G}_\lambda$ be the formal Lie group over $R$ with group law $X,Y \mapsto X+Y+\lambda XY$. If $\lambda = 0$, the formal group $\widehat{\mathbb G}_\lambda$ is isomorphic to $\widehat{\mathbb G}_a$, while if $\lambda$ is invertible, $\widehat{\mathbb G}_\lambda$ is isomorphic to $\widehat{\mathbb G}_m$.
	Let $G_n = (\widehat{\mathbb G}_\lambda[F^n])^\vee$, identified with the cokernel of $V^n$ for $\widehat{\mathbb G}_\lambda^\vee$.
	Thus $G_n$ is an $n$-cosmooth group scheme over $\Spec R$. Its fiber at $\lambda = 0$ is isomorphic to $\alpha_{p^n}^\vee$, while its fiber over $\lambda=1$ is isomorphic to $\Z/p^n\Z$.

	We first compute the restricted Lie algebra of $G_1^\vee = \widehat{\mathbb G}_\lambda[F]$ by hand. Note that $\widehat{\mathbb G}_\lambda$ has invariant differential $dT/(1+\lambda T)$ and thus its Lie algebra is spanned by $(1+\lambda T)\frac{d}{dT}$.
	According to Hochschild's identity \cite[Lemma 1]{Hoc55}, if $\partial$ is a derivation of a commutative ring $A$ and $f \in A$, then $(f\partial)^p = f^p\partial^p + (f\partial)^{p-1}(f)\partial$.
	Thus 
	\[ 
		\left((1 + \lambda T)\frac{d}{dT}\right)^p = \left(\left((1 + \lambda T)\frac{d}{dT}\right)^{p-1}(1+\lambda T) \right)\frac{d}{dT} = \lambda^{p-1}(1+\lambda T)\frac{d}{dT} .
	\]
	We conclude that the Dieudonné module of $G_1$ is $R[F]/R[F](F - \lambda^{p-1})$.

	Now we calculate the Dieudonné module of $G_n$ for all $n \geq 1$. By \cite[Theorem 2.19.1]{SS01}, the group $\widehat{\mathbb G}_\lambda^\vee$ is the kernel of the epimorphism $F - [\lambda^{p-1}]: W \to W$.
	On account of the commutative diagram
\[\begin{tikzcd}
	0 & {(\widehat{\mathbb G}_\lambda^\vee)^{(p^n)}} && W && W & 0 \\
	0 & {\widehat{\mathbb G}_\lambda^\vee} && W && W & 0
	\arrow[from=1-1, to=1-2]
	\arrow[from=1-2, to=1-4]
	\arrow["{V^n}", from=1-2, to=2-2]
	\arrow["{F - [\lambda^{p^n(p-1)}]}", from=1-4, to=1-6]
	\arrow["{V^n}", from=1-4, to=2-4]
	\arrow[from=1-6, to=1-7]
	\arrow["{V^n}", from=1-6, to=2-6]
	\arrow[from=2-1, to=2-2]
	\arrow[from=2-2, to=2-4]
	\arrow["{F - [\lambda^{p-1}]}", from=2-4, to=2-6]
	\arrow[from=2-6, to=2-7]
\end{tikzcd}\]
	we find $G_n= \ker(F - [\lambda^{p-1}]:W_n \to W_n)$.
	Thus we have an exact sequence
	\[ 0 \to \Hom(W_n,W_n) \to \Hom(W_n,W_n) \to \Hom(G_n,W_n).\]
	The map $\mathbb{E}_n= \Hom(W_n,W_n) \to \Hom(G_n,W_n)$ reduced modulo $V$ is the corresponding map for $n=1$ by Lemma \ref{lemma: maps into smaller Witt}. Thus, $\mathbb{E}_n \to \Hom(G_n,W_n)$ is surjective modulo $V$, hence is surjective.
	The induced map $\Hom(W_n,W_n) \to \Hom(W_n,W_n)$ is right multiplication by $F - [\lambda]^{p-1}$.
	Thus the Dieudonné module of $G_n$ is 
	\[ \Mn(G_n) = \mathbb{E}_n / \mathbb{E}_n(F - [\lambda^{p-1}]).\]

	By Proposition \ref{proposition: base change}, we can calculate the Dieudonné modules of $\alpha_{p^n}^\vee$ and $\Z/p^n\Z$ through base change.
	At $\lambda = 0$, we find $\Mn(\alpha_{p^n}^\vee) = \mathbb{E}_n/\mathbb{E}_nF$. The underlying $W(R)$-module is $M = \bigoplus_{i=0}^{n-1} R|_{\sigma^i}$, and $V: M|_{\sigma} \to M$ acts as the shift.
	Similarly, for $\lambda = 1$, we find $G_n = (\mu_{p^n})^\vee = \Z/p^n\Z$, so $\Mn(\Z/p^n\Z) = \mathbb{E}_n/\mathbb{E}_n(F-1)$.
	The map $W_n \to \mathbb{E}_n/\mathbb{E}_n(F-1)$ is an isomorphism of $W_n\{V\}$-modules, and $F$ acts on the image of $W_n$ by 
	\[ F x = x^\sigma F \equiv x^\sigma \mod \mathbb{E}_n(F-1).\]
	We find $\Mn(\Z/p^n\Z) \cong W_n$ with the usual actions of $F$ and $V$, agreeing with Example \ref{example: Z/p}.
\end{example}

\section{Smoothness of Stacks and Truncation Morphisms} \label{section: smoothness}

Let $\tau_n$ denote the truncation $\Sm_{n+1} \to \Sm_n$.
For $n \geq 1$, Theorem \ref{thm:equiv} allows us to identify $\Sm_n^r$ with the moduli stack of $n$-cosmooth Dieudonn\'e modules of rank $r$.  Moreover, under these identifications, Lemma \ref{lemma: maps into smaller Witt} and Corollary \ref{corollary: exact into Wn} identify the truncation morphisms $\tau_n: \Sm_{n+1} \to \Sm_n$ with the truncations $M \mapsto M/V^nM$ on Dieudonn\'e modules. Theorem \ref{thm:smooth} follows from Propositions \ref{prop: smn is smooth} and \ref{prop: truncation is smooth}, which are applications of Proposition \ref{prop:n-cosmooth local}.

\begin{proposition}\label{prop: smn is smooth}
$\Sm_n$ is a smooth algebraic stack over $\Fp$.
\end{proposition}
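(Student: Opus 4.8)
The plan is to exhibit a smooth surjection onto $\Sm_n^r$ from a smooth affine scheme, using the local presentation of Proposition \ref{prop:n-cosmooth local} together with the identification, furnished by Theorem \ref{thm:equiv}, of $\Sm_n^r$ with the moduli stack of $n$-cosmooth $D_R$-modules of rank $r$. Since $\Sm_n^r$ is already known to be an algebraic stack of finite type over $\F_p$ by Lemma \ref{lemma: finite type algebraic stack}, and $\Sm_n$ is the disjoint union of the $\Sm_n^r$, it suffices to prove each $\Sm_n^r$ is smooth, for which it is enough to produce a smooth surjective morphism to it from a smooth $\F_p$-scheme.

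First I would construct the parameter scheme. A presentation as in Proposition \ref{prop:n-cosmooth local} is determined by the matrix of structure constants $(a_{ij}(V))_{1 \le i,j \le r}$ with $a_{ij}(V) \in W(R)\{V\}/V^n$. Writing $a_{ij}(V) = \sum_{k=0}^{n-1} V^k a_{ijk}$, only finitely many Witt-coordinates of the $a_{ijk}$ survive modulo $V^n$, so these data are classified by a finite-dimensional affine space $\mathcal{A} = \mathbb{A}^N_{\F_p}$, which is smooth over $\F_p$. Over $\mathcal{A}$ there is a universal map $u$ of free $\mathbb{E}_n$-modules, and by the ``if'' direction of Proposition \ref{prop:n-cosmooth local} its cokernel $M^{\mathrm{univ}} = \coker(u)$ is a universal family of $n$-cosmooth modules of rank $r$; via Theorem \ref{thm:equiv} this yields a morphism $q \colon \mathcal{A} \to \Sm_n^r$.

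Next I would check $q$ is smooth and surjective. Surjectivity is immediate from Proposition \ref{prop:n-cosmooth local}, which says every $n$-cosmooth module is Zariski-locally a cokernel of this form. For smoothness the key point is that, for any family $M_S$ over an $\F_p$-scheme $S$ classified by a morphism $S \to \Sm_n^r$, the fiber product $\mathcal{A} \times_{\Sm_n^r} S$ is identified with the scheme of tuples $(e_1, \dots, e_r)$ of sections of $M_S$ reducing to a basis of $M_S/VM_S$: given such generators, Lemma \ref{lem:generators} writes $Fe_i = \sum_j a_{ij}(V) e_j$, and an $F$-degree argument as in the proof of Lemma \ref{lem:inj} shows the $a_{ij}(V)$ are uniquely determined (a relation $\sum_j b_j(V) e_j = 0$ with each $b_j$ free of $F$ forces $b_j = 0$), so the structure constants and the isomorphism $M^{\mathrm{univ}} \cong M_S$ (an isomorphism by Lemma \ref{lemma: isomorphism mod V}, since it is one modulo $V$) are recovered from $(e_i)$ alone. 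As $M_S$ is finite locally free over $\OO_S$, this scheme of generators is a torsor under a smooth affine group scheme (an extension of $\mathrm{GL}_r$ by a vector group), hence smooth and surjective over $S$; therefore $q$ is smooth and surjective.

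Finally, since $\mathcal{A}$ is smooth over $\F_p$ and $q$ is a smooth surjective cover, $\Sm_n^r$ is smooth over $\F_p$, and taking the disjoint union over $r$ gives the proposition. The main obstacle I anticipate is precisely the uniqueness of the structure constants attached to a choice of generators — equivalently, the clean identification of the fibers of $q$ with a frame-type bundle — since without it one cannot exclude that distinct valid presentations of the same module obstruct the smoothness of $q$; this is exactly the point where the $F$-degree and injectivity analysis underlying Lemma \ref{lem:inj} is essential.
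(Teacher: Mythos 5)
Your strategy---building an explicit smooth atlas $\mathcal{A}\to \Sm_n^r$ out of the structure constants of Proposition~\ref{prop:n-cosmooth local}---is genuinely different from the paper's proof, which simply verifies formal smoothness: given a surjection of local rings $A\to B$ and an $n$-cosmooth $B$-module presented as in Proposition~\ref{prop:n-cosmooth local}, the paper lifts the coefficients $a_{ij}(V)$ \emph{arbitrarily} from $W(B)\{V\}$ to $W(A)\{V\}$ and takes the module they present over $A$. But your proof has a genuine gap, and it sits exactly at the step you flagged: the uniqueness of the structure constants attached to a tuple of generators is \emph{false} for $n\geq 2$. The reason is that an element of $W(R)\{V\}$ is not ``free of $F$'' in the canonical form of the Cartier ring: the defining relation $VaF=v(a)$ means the Witt vector $(x_0,x_1,x_2,\ldots)\in W(R)$ equals $\sum_m V^m[x_m]F^m$ in $D_R$, so Witt coordinates of index $\geq 1$ carry hidden $F$'s. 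Concretely, take $n=2$, $r=1$, $M=\mathbb{E}_2/\mathbb{E}_2(F-1)$ (the Dieudonn\'e module of $\Z/p^2\Z$), and let $e$ be the image of $1$. For $x\in R$ put $b_0=(0,x)\in W_2(R)$ and $b_1=-x\in W_1(R)=R$. Then $b_0=V[x]F$, so $b_0e=V[x]Fe=V[x]e$; and since $[x]+[-x]$ has vanishing first Witt coordinate, it lies in $v(W(R))=VW(R)F$, so $V\bigl([x]+[-x]\bigr)e\in V^2M=0$, giving $Vb_1e=V[-x]e=-V[x]e$. Hence $b(V)=b_0+Vb_1$ is a \emph{nonzero} point of your coefficient space with $b(V)e=0$, and the tuples $\bigl((1,0),0\bigr)$ and $\bigl((1,x),-x\bigr)$ are distinct points of $\mathcal{A}$ presenting the same module with the same generator. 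The $F$-degree argument of Lemma~\ref{lem:inj} cannot rescue this: that lemma concerns relations in the \emph{free} module $L$, where canonical forms in $\mathbb{E}_n$ separate $F$-degrees, whereas your claim concerns relations in the quotient $M$, where the defining relation lets Witt coordinates trade against $V$-terms as above. Consequently $\mathcal{A}\times_{\Sm_n^r}S$ is not the frame scheme you describe; it fibers over it with positive-dimensional fibers (cosets of the scheme of relations), and the smoothness of those fibers is precisely what remains unproven. (Two smaller problems: ``$M_S$ is finite locally free over $\OO_S$'' is not meaningful, as $M_S$ is a $W_n(\OO_S)$-module rather than an $\OO_S$-module; and the generator tuples do not form a torsor under $\Aut_{D_R}(M_S)$, since rescaling $e\mapsto ce$ changes the structure equation $Fe=e$ into $Fe'=c^{p-1}e'$.)

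The fix is to parameterize presentations by \emph{multiplicative} coefficients $a_{ij}(V)=\sum_{k}V^k[a_{ijk}]$ with $a_{ijk}\in R$, so that $\mathcal{A}'=\mathbb{A}^{nr^2}_{\Fp}$. The paper's Lemma~\ref{lemma: multiplicative cosmooth coordinates} (proved there in order to handle the truncation morphism in Proposition~\ref{prop: truncation is smooth}) says exactly that every element of an $n$-cosmooth module has \emph{unique} coordinates $\sum_{i,j}V^i[c_{ij}]e_j$; applied to $Fe_k$, it gives existence and uniqueness of multiplicative structure constants, and it also identifies the functor of generator tuples of a family $M_S$ with an open subscheme of $\mathbb{A}^{nr^2}_S$ (invertibility of the degree-zero block), which is visibly smooth over $S$---no torsor structure is needed, and Lemma~\ref{lemma: isomorphism mod V} supplies the isomorphism of presentations just as you intended. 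With that replacement your atlas argument goes through and gives a correct, genuinely global proof; but note that it is substantially longer than the paper's argument, which needs no uniqueness at all precisely because the infinitesimal lifting criterion only asks one to lift \emph{some} presentation.
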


\begin{proof}
Since $\Sm_n = \sqcup_{r \geq 1} \Sm_n^r$ and $\Sm_n^r$ is of finite type over $\Fp$ by Lemma \ref{lemma: finite type algebraic stack}, it suffices to show that $\Sm_n^r$ is formally smooth over $\Fp$ (\cite[Tag 0DP0]{stacks-project}). Let $A \to B$ be a surjection of local rings, and suppose that we are given an $n$-cosmooth rank $r$ module $M$ over $B$. Then by Proposition \ref{prop:n-cosmooth local} we can write 
\begin{equation*}
M \cong \frac{L}{(Fe_i - \sum_{j = 1}^r a_{ij}(V) e_j)}
\end{equation*}
where $L$ is a free $\mathbb{E}_{n, B}$-module with basis $e_1, \dots, e_r$ and the $a_{ij}(V) \in W(B)\{V\}$. Let $a_{ij}'(V)$ be lifts of the $a_{ij}(V)$ to $W(A)\{V\}$.  Then if $L'$ denotes a free $\mathbb{E}_{n, A}$-module with basis $e_1', \dots, e_r'$, Proposition \ref{prop:n-cosmooth local} gives that the module
\begin{equation*}
\frac{L'}{(Fe_i' - \sum_j a_{ij}'(V)e_j')}
\end{equation*}
is a lift of $M$ to an $n$-cosmooth rank $r$ module over $A$. 
\end{proof}

To prove smoothness of the truncation morphism $\tau_n: \Sm_{n+1} \to \Sm_n$, we need more control on the matrix of $F$ on a cosmooth module.
\begin{lemma}\label{lemma: multiplicative cosmooth coordinates}
    Let $M$ be an $n$-cosmooth $D_R$-module and $e_1,\ldots, e_r \in M$ such that their images form a basis over $R$ for $M/VM$. 
    If $[a] \in W(R)$ is the multiplicative lift of $a \in R$, then for all $m \in M$ there exist unique $a_i^j \in R$ for $i \in \{0,\ldots,n-1\}$ and $j \in \{1,\ldots, r\}$ such that
    \[ m = \sum_{i=0}^{n-1} \sum_{j=1}^r V^i[a_i^j] e_j.\]
\end{lemma}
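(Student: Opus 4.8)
The plan is to prove existence and uniqueness of the expansion
$m = \sum_{i=0}^{n-1}\sum_{j=1}^r V^i[a_i^j]e_j$ by exploiting the $V$-adic filtration on $M$, which is finite and exhaustive since $V^n = 0$. The key structural input is Lemma \ref{lem:generators}, which tells us that $e_1,\dots,e_r$ generate $M$ over $W(R)\{V\}$, together with the isomorphisms $(M/VM)|_{\sigma^i} \cong V^iM/V^{i+1}M$ coming from the $n$-cosmooth condition (as used in the proof of Lemma \ref{lemma: isomorphism mod V}). The subtle point is that the coefficients are not arbitrary elements of $W(R)$ but \emph{multiplicative (Teichm\"uller) lifts} $[a_i^j]$, so the claim is really that each graded piece $V^iM/V^{i+1}M$ is freely spanned over $R$ by the images of $V^ie_j$, and that every element of $W(R)$ contributes to a unique graded layer via its Teichm\"uller expansion.

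\textbf{Existence.} First I would argue layer by layer on the $V$-adic filtration. Given $m \in M$, its image $\bar m$ in $M/VM$ can be written uniquely as $\sum_j a_0^j \bar e_j$ with $a_0^j \in R$, since $\bar e_j$ form a basis. Set $m_0 = \sum_j [a_0^j]e_j$; then $m - m_0 \in VM$. The point is that the Teichm\"uller lift $[a_0^j]$ reduces to $a_0^j$ modulo $V$ (as $W(R) \to W_1(R) = R$ sends $[a]\mapsto a$), so $m - m_0$ indeed lands in $VM$. Now $m - m_0 = Vm'$ for some $m'$, and I apply the isomorphism $(M/VM)|_{\sigma}\cong VM/V^2M$: write $m' \equiv \sum_j a_1^j e_j \pmod{VM}$ and subtract $\sum_j V[a_1^j]e_j$. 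Iterating $n$ times and using $V^nM = 0$ terminates the process, producing the desired expansion. One must check at each step that subtracting a Teichm\"uller-coefficient term advances the filtration, which follows because $V^i[a]e_j$ has image $V^i(a\,\bar e_j)$ in the $i$-th graded piece under the identification $V^iM/V^{i+1}M \cong (M/VM)|_{\sigma^i}$.

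\textbf{Uniqueness.} For uniqueness I would show that if $\sum_{i,j} V^i[a_i^j]e_j = 0$, then all $a_i^j = 0$. Reducing modulo $VM$ kills every term with $i \geq 1$ and leaves $\sum_j a_0^j \bar e_j = 0$ in $M/VM$; since the $\bar e_j$ are an $R$-basis, $a_0^j = 0$ for all $j$. Then the whole sum lies in $VM$, and examining its image in $VM/V^2M \cong (M/VM)|_{\sigma}$ — where the $V^1[a_1^j]e_j$ project to $\sum_j (a_1^j)\overline{e_j}$ under the $\sigma$-twisted identification — forces $a_1^j = 0$. Descending through the filtration gives all $a_i^j = 0$.

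\textbf{Main obstacle.} The step I expect to require the most care is the bookkeeping with Teichm\"uller lifts, specifically verifying that the reduction of $V^i[a_i^j]e_j$ in the graded piece $V^iM/V^{i+1}M$ is exactly $a_i^j\cdot\overline{e_j}$ under the twisted identification with $(M/VM)|_{\sigma^i}$, with no interference from higher ghost components of $[a_i^j]$. The crucial fact is that multiplicative lifts satisfy $[a]\equiv a \pmod V$ in $W(R)$, so $[a]e_j \equiv a e_j$ modulo $VM$; higher Witt components of $[a]$ appear with a factor of $V$ and thus do not contaminate the leading graded layer. This is precisely why Teichm\"uller (rather than arbitrary) lifts make the coefficients unique, and pinning down this compatibility is the heart of the argument; the rest is a routine induction on the length of the $V$-adic filtration.
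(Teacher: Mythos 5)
Your proof is correct and takes essentially the same approach as the paper: a finite induction along the $V$-adic filtration, using that cosmoothness identifies each graded piece $V^iM/V^{i+1}M$ with a Frobenius twist of $M/VM$ (so the images of the $e_j$ are a basis there) and that $[a]e_j \equiv a\bar e_j \bmod VM$. The only cosmetic difference is direction: the paper inducts on $n$ via the $(n-1)$-cosmooth truncation $M/V^{n-1}M$ and handles the bottom layer $V^{n-1}M \cong M/VM$ last, whereas you peel layers off from the top by successive approximation.
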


\begin{proof}
    The proof is by induction on $n$. The statement holds for $n=1$.
    If $M$ is $n$-cosmooth, then $M/V^{n-1}M$ is $(n-1)$-cosmooth, so for $m \in M$ there exist unique $a_i^j \in R$ for $i < n-1$ such that 
    \[ m - \sum_{i=0}^{n-2}\sum_{j=1}^r V^i [a_i^j]e_j \in V^{n-1}M.\]
    Now left multiplication by $V^{n-1}$ defines an isomorphism of abelian groups $M/VM \to V^{n-1}M$, so there exist unique $a_{n-1}^j \in R$ such that
    \[ m - \sum_{i=0}^{n-2}\sum_{j=1}^r V^i[a_i^j]e_j = \sum_{j=1}^r V^{n-1}[a_{n-1}^j]e_j.\qedhere\]
\end{proof}

\begin{proposition}\label{prop: truncation is smooth}
The truncation morphism $\tau_n: \Sm_{n+1} \to \Sm_n$ induced by $G \mapsto G[F^n]$ is smooth and surjective. 
\end{proposition}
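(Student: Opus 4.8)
The plan is to translate the statement into Dieudonn\'e modules via Theorem \ref{thm:equiv} and then verify surjectivity and formal smoothness directly from the local presentation of Proposition \ref{prop:n-cosmooth local}. As recorded at the start of this section, under the identification of $\Sm_m^r$ with the stack of $m$-cosmooth $D_R$-modules of rank $r$, the truncation $\tau_n$ becomes the functor $M \mapsto M/V^nM$ carrying $(n+1)$-cosmooth modules to $n$-cosmooth modules. Since $\Sm_{n+1}$ and $\Sm_n$ are of finite type over $\Fp$ by Lemma \ref{lemma: finite type algebraic stack}, it suffices to show that $\tau_n$ is surjective on points and formally smooth. For surjectivity, given an $n$-cosmooth $D_R$-module $\ol{M}$ I would take a local presentation as in Proposition \ref{prop:n-cosmooth local}, with coefficients $a_{ij}(V) \in W(R)\{V\}$, and form the $(n+1)$-cosmooth module presented by the \emph{same} coefficients (now read in $\mathbb{E}_{n+1}$); its reduction modulo $V^n$ is $\ol{M}$. (Alternatively one may truncate the $\infty$-cosmooth lift furnished by Proposition \ref{proposition: lift n-cosmooth module}.) This exhibits a lift Zariski-locally, hence $\tau_n$ is surjective.

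For formal smoothness I would take a square-zero extension $A \to B$ of local rings with kernel $I$, together with an $(n+1)$-cosmooth module $M_B$ over $B$, an $n$-cosmooth module $N_A$ over $A$, and an isomorphism $\phi \colon M_B/V^nM_B \xrightarrow{\sim} N_A \otimes_A B$; the goal is to produce an $(n+1)$-cosmooth $M_A$ over $A$ with $M_A \otimes_A B \cong M_B$ and $M_A/V^nM_A \cong N_A$, compatibly with $\phi$. Over local rings the projective modules in sight are free, so the presentations exist globally. I would first choose generators $e_1,\dots,e_r$ of $M_B$ lifting a basis of $M_B/VM_B$ and use Lemma \ref{lemma: multiplicative cosmooth coordinates} to write uniquely
\[ F e_i = \sum_{i'=0}^{n} \sum_{j=1}^r V^{i'}[\,b_{i,i'}^j\,]\, e_j, \qquad b_{i,i'}^j \in B. \]
Next, using $\phi$, I would lift the $e_i$ to generators $e_i^A \in N_A$ lifting a basis of $N_A/VN_A$, and write $F e_i^A = \sum_{i'=0}^{n-1}\sum_j V^{i'}[\,a_{i,i'}^j\,]\,e_j^A$ with $a_{i,i'}^j \in A$ unique. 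The uniqueness in Lemma \ref{lemma: multiplicative cosmooth coordinates} forces $\overline{a_{i,i'}^j} = b_{i,i'}^j$ in $B$ for all $i' \le n-1$. Finally I would pick set-theoretic lifts $\tilde b_{i,n}^j \in A$ of the top coefficients $b_{i,n}^j$ and define $M_A$ by the presentation of Proposition \ref{prop:n-cosmooth local} with
\[ F E_i = \sum_{i'=0}^{n-1}\sum_j V^{i'}[\,a_{i,i'}^j\,]\,E_j \; + \; \sum_j V^{n}[\,\tilde b_{i,n}^j\,]\,E_j, \]
whose coefficients lie in $W(A)\{V\}$, so that $M_A$ is $(n+1)$-cosmooth of rank $r$. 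Because Teichmüller representatives are functorial, reducing the coefficients along $A \to B$ recovers the presentation of $M_B$ (using $\overline{a}=b$ in degrees below $n$ and $\overline{\tilde b_{i,n}^j}=b_{i,n}^j$ in degree $n$), while reducing modulo $V^n$ recovers the presentation of $N_A$; as all identifications arise from the matched generators $E_i \leftrightarrow e_i \leftrightarrow e_i^A$, they are compatible with $\phi$. This produces the desired lift and establishes formal smoothness.

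The main obstacle is precisely the need to choose a single presentation of $M_A$ whose two reductions \emph{simultaneously} match the prescribed $M_B$ and $N_A$ and the gluing isomorphism $\phi$ between them. This is exactly what the canonical multiplicative coordinates of Lemma \ref{lemma: multiplicative cosmooth coordinates} resolve: their uniqueness forces the low-order coefficients coming from the two sides to coincide, so the only genuine choice is the top-order term $V^n[\,\tilde b_{i,n}^j\,]$, which can be lifted arbitrarily since $A \to B$ is surjective. Without such a normal form, the coefficients $a_{ij}(V) \in W(R)\{V\}$ from Proposition \ref{prop:n-cosmooth local} would be ambiguous and the compatibility of the two reductions would be unclear.
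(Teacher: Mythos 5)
Your proposal is correct and takes essentially the same approach as the paper: surjectivity by re-reading the coefficients of the presentation from Proposition \ref{prop:n-cosmooth local} in $\mathbb{E}_{n+1}$, and formal smoothness via the unique multiplicative coordinates of Lemma \ref{lemma: multiplicative cosmooth coordinates}, whose uniqueness forces the coefficients in degrees below $n$ to match and leaves only the top $V^n$-coefficients to be lifted along the surjection $A \to B$. The only differences are cosmetic: you choose generators on the $B$-side and transport them through $\phi$ (the paper starts on the $A$-side and lifts along $\alpha$), and you phrase the lifting problem for square-zero extensions of local rings rather than surjections of Artinian local rings, which the paper itself notes is immaterial.
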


\begin{proof}
The surjectivity of $\tau_n$ follows from Proposition \ref{prop:n-cosmooth local}: over a field $k$, every $n$-cosmooth module $M$ can be written in the form 
\begin{equation*}
M \cong \frac{\mathbb{E}_{n, k}^r}{(Fe_i - \sum a_{ij}(V) e_j)}
\end{equation*}
with $a_{ij}(V) \in W(k)\{V\}$.  Then 
\begin{equation*}
 \frac{\mathbb{E}_{n+1, k}^r}{(Fe_i - \sum a_{ij}(V) e_j)}
\end{equation*}
is a lift of $M$ to an $(n+1)$-cosmooth module over $k$.

To show that $\tau_n$ is smooth, it suffices to show that the restriction $\tau_n: \Sm_{n+1}^r \to \Sm_n^r$ is smooth. Since this is a map between finite type stacks over $\Fp$, it is locally of finite presentation, so it suffices to show that $\tau_n$ is formally smooth (\cite[Tag 0DP0]{stacks-project}). Suppose we have a 2-commutative diagram 
\begin{equation} \label{eqn:formalsmooth}
\begin{tikzcd}
\Spec B \arrow[r] \arrow[hookrightarrow, d] & \Sm_{n+1}^r \arrow[d, "\tau_n"]  \\
\Spec A \arrow[r] \arrow[Rightarrow, ur, "\alpha"]& \Sm_n^r
\end{tikzcd}
\end{equation}
where $A \to B$ is a surjection of Artinian local rings.  Thus we are given an $(n+1)$-cosmooth module $N$ over $B$, an $n$-cosmooth module $\ol{M}$ over $A$, and an isomorphism $\alpha: \ol{M}|_{B} \to N/V^n N$.  
Pick $\{e_1,\ldots, e_r\} \subseteq \overline{M}$ and $\{f_1,\ldots,f_r\} \subseteq N$ such that $\{e_1,\ldots, e_r\}$ maps to a basis of $\ol{M}/V\ol{M}$ and $\alpha(e_j) \equiv f_j \mod V^n$ for all $j$.
By Lemma \ref{lemma: multiplicative cosmooth coordinates}, there exist unique $a_{ik}^j \in A$ and $b_{ik}^j \in B$ such that 
\[ Fe_k = \sum_{i=0}^{n-1}\sum_{j=1}^r V^i [a_{ik}^j] e_j, \qquad Ff_k = \sum_{i=0}^n \sum_{j=1}^r V^i [b_{ik}^j]f_j.\]
Since $\alpha(e_j) \equiv f_j \mod V^n$, 
the uniqueness of Lemma \ref{lemma: multiplicative cosmooth coordinates} implies that $a_{ik}^j \mapsto b_{ik}^j$ for $i < n$.
Pick $a_{nk}^j \in A$ such that $a_{nk}^j \mapsto b_{nk}^j$ under $A\to B$,
and define 
\[ M := \left.\bigoplus_{j=1}^r \mathbb{E}_{n+1,A}e_j \right/ \left( F e_k - \sum_{i=0}^n \sum_{j=1}^r V^i[a_{ik}^j]e_j\right).\]
Then $M/V^nM \cong \ol{M}$ and $M|_{B} \cong N$ compatibly with $\alpha$.
Thus $M$ defines a lift $\Spec A \to \Sm_{n+1}^r$ in the diagram (\ref{eqn:formalsmooth}).
\end{proof}

\begin{remark}
Note that the proofs of Propositions \ref{prop: smn is smooth} and \ref{prop: truncation is smooth} yield a stronger lifting property than what is needed for smoothness.  In particular, the Artinian hypothesis in the proof of Proposition \ref{prop: truncation is smooth} is not used.
\end{remark}

\printbibliography

@article{ALB23,
  author    = {Ansch{\"u}tz, Johannes and {Le Bras}, Arthur-César},
  doi       = {10.1017/fmp.2022.22},
  fjournal  = {Forum of Mathematics, Pi},
  journal   = {Forum Math. Pi},
  number    = {2},
  shorthand = {ALB23},
  title     = {Prismatic Dieudonn\'e Theory},
  volume    = {11},
  year      = {2023}
}

@misc{AM25,
  archiveprefix = {arXiv},
  author        = {Dima Arinkin and Joshua Mundinger},
  eprint        = {2512.13856},
  title         = {Cartier duality via Mittag-Leffler modules},
  year          = {2025}
}

@book{BBM82,
  author     = {Berthelot, Pierre and Breen, Lawrence and Messing, William},
  doi        = {10.1007/BFb0093025},
  fseries    = {Lecture Notes in Mathematics},
  pagination = {x+261},
  publisher  = {Springer},
  series     = {Lecture Notes in Math.},
  title      = {Th\'eorie de Dieudonn\'e Cristalline II},
  volume     = {930},
  year       = {1982}
}

@article{Car67Modules,
  author   = {Cartier, Pierre},
  fjournal = {Comptes Rendus Hebdomadaires des S\'{e}ances de l'Acad\'{e}mie des Sciences. S\'{e}ries A et B},
  issn     = {0151-0509},
  journal  = {C. R. Acad. Sci. Paris S\'{e}r. A-B},
  pages    = {129--132},
  title    = {Modules associ\'{e}s \`a un groupe formel commutatif. {C}ourbes typiques},
  volume   = {265},
  year     = {1967}
}

@book{Dem72,
  author    = {Demazure, Michel},
  fseries   = {Lecture Notes in Mathematics},
  publisher = {Springer},
  series    = {Lecture Notes in Math.},
  title     = {Lectures on {$p$}-divisible groups},
  volume    = {302},
  year      = {1972}
}

@book{DG70,
  author    = {Demazure, Michel and Gabriel, Pierre},
  fseries   = {Lecture Notes in Mathematics},
  publisher = {North-Holland Publishing Co., Amsterdam},
  shorthand = {DG70},
  subtitle  = {Tome I: G\'eom\'etrie Alg\'ebrique, G\'en\'eralit\'es, Groupes Commutatifs},
  title     = {Groupes alg\'ebriques},
  year      = {1970}
}

@article{Die55,
  author   = {Dieudonn\'{e}, Jean},
  doi      = {10.2307/2372633},
  fjournal = {American Journal of Mathematics},
  issn     = {0002-9327},
  journal  = {Amer. J. Math.},
  pages    = {429--452},
  title    = {Lie groups and {L}ie hyperalgebras over a field of characteristic {$p > 0$}. {IV}},
  volume   = {77},
  year     = {1955}
}

@misc{Dri24,
  archiveprefix = {arXiv},
  author        = {Drinfeld, Vladimir},
  eprint        = {2307.06194},
  title         = {On the {L}au group scheme},
  year          = {2024}
}

@misc{drinfeld-lectures,
  author = {Drinfeld, Vladimir},
  month  = {10},
  note   = {Lectures available on YouTube:
            \url{https://www.youtube.com/watch?v=tQFWrvR3j6g}, \\
            \url{https://www.youtube.com/watch?v=yZv1xYFIpBY}
            },
  title  = {The stacks of n-Truncated Barsotti-Tate Groups III-IV.},
  year   = {2023}
}

@book{Gro74,
  author    = {Grothendieck, Alexandre},
  note      = {\'{E}t\'{e}, 1970},
  number    = {45},
  publisher = {Les Presses de l'Universit\'{e} de Montr\'{e}al, Montreal, QC},
  series    = {S\'{e}minaire de Math\'{e}matiques Sup\'{e}rieures [Seminar on Higher Mathematics]},
  title     = {Groupes de {B}arsotti-{T}ate et cristaux de {D}ieudonn\'{e}},
  year      = {1974}
}

@article{Hoc55,
  author   = {Hochschild, Gerhard},
  doi      = {10.2307/1993043},
  fjournal = {Transactions of the American Mathematical Society},
  issn     = {0002-9947},
  journal  = {Trans. Amer. Math. Soc.},
  pages    = {477--489},
  title    = {Simple algebras with purely inseparable splitting fields of exponent {$1$}},
  volume   = {79},
  year     = {1955}
}

@article{Ill85,
  author    = {Illusie, Luc},
  booktitle = {Seminar on arithmetic bundles: the Mordell conjecture (Paris, 1983/84)},
  fjournal  = {Ast\'{e}risque},
  journal   = {Ast\'{e}risque},
  pages     = {151--198},
  title     = {D\'{e}formations de groupes de {B}arsotti-{T}ate (d'apr\`es {A}. {G}rothendieck)},
  volume    = {127},
  year      = {1985}
}

@article{Lau13,
  author   = {Lau, Eike},
  doi      = {10.1090/S0894-0347-2012-00744-9},
  fjournal = {Journal of the American Mathematical Society},
  journal  = {J. Amer. Math. Soc.},
  number   = {1},
  pages    = {129--165},
  title    = {Smoothness of the truncated display functor},
  volume   = {26},
  year     = {2013}
}

@book{Mes72,
  author    = {Messing, William},
  fseries   = {Lecture Notes in Mathematics},
  publisher = {Springer},
  series    = {Lecture Notes in Math.},
  title     = {The crystals associated to {B}arsotti-{T}ate groups: with applications to abelian schemes},
  volume    = {264},
  year      = {1972}
}

@book{MM74,
  author    = {Mazur, Barry and Messing, William},
  fseries   = {Lecture Notes in Mathematics},
  publisher = {Springer},
  series    = {Lecture Notes in Math.},
  title     = {Universal extensions and one dimensional crystalline cohomology},
  volume    = {370},
  year      = {1974}
}

@misc{Mon24,
  archiveprefix = {arXiv},
  author        = {Shubhodip Mondal},
  eprint        = {2405.12967v3},
  title         = {Dieudonn\'e theory via classifying stacks and prismatic $F$-gauges},
  year          = {2025}
}

@book{SGA3,
  editor     = {Demazure, Michel and Grothendieck, Alexandre},
  fseries    = {Lecture Notes in Mathematics},
  mrnumber   = {274458},
  publisher  = {Springer},
  series     = {Lecture Notes in Math.},
  shorthand  = {SGA3},
  subtitle   = {Tome I: {P}ropri\'et\'es g\'en\'erales des sch\'emas en groupes},
  title      = {Sch\'emas en groupes},
  titleaddon = {S\'eminaire de G\'eom\'etrie Alg\'ebrique du Bois Marie 1962/64 (SGA 3)},
  volume     = {151},
  year       = {1970}
}

@article{SS01,
  author   = {Sekiguchi, Tsutomu and Suwa, Noriyuki},
  doi      = {10.2748/tmj/1178207479},
  fjournal = {The Tohoku Mathematical Journal. Second Series},
  issn     = {0040-8735},
  journal  = {Tohoku Math. J. (2)},
  number   = {2},
  pages    = {203--240},
  title    = {A note on extensions of algebraic and formal groups. {IV}. {K}ummer-{A}rtin-{S}chreier-{W}itt theory of degree {$p^2$}},
  volume   = {53},
  year     = {2001}
}

@misc{stacks-project,
  author       = {{The Stacks project authors}},
  howpublished = {\url{https://stacks.math.columbia.edu}},
  shorthand    = {Stacks},
  title        = {The Stacks project},
  year         = {2024}
}

@article{TO70,
  author   = {Tate, John and Oort, Frans},
  fjournal = {Annales Scientifiques de l'\'{E}cole Normale Sup\'{e}rieure. Quatri\`eme S\'{e}rie},
  issn     = {0012-9593},
  journal  = {Ann. Sci. \'{E}cole Norm. Sup. (4)},
  mrnumber = {265368},
  pages    = {1--21},
  title    = {Group schemes of prime order},
  doi={10.24033/asens.1186},
  volume   = {3},
  year     = {1970}
}

@incollection{Wed01,
  author    = {Wedhorn, Torsten},
  booktitle = {Moduli of abelian varieties ({T}exel {I}sland, 1999)},
  isbn      = {3-7643-6517-X},
  mrnumber  = {1827029},
  pages     = {441--471},
  publisher = {Birkh\"{a}user, Basel},
  series    = {Progr. Math.},
  title     = {The dimension of {O}ort strata of {S}himura varieties of {PEL}-type},
  volume    = {195},
  year      = {2001}
}

@book{Zin84,
  author    = {Zink, Thomas},
  fseries   = {Teubner-Texte zur Mathematik [Teubner Texts in Mathematics]},
  publisher = {BSB B. G. Teubner Verlagsgesellschaft, Leipzig},
  series    = {Teubner-Texte Math.},
  title     = {Cartiertheorie kommutativer formaler {G}ruppen},
  volume    = {68},
  year      = {1984},
  note      = {Cited from the English translation, \emph{Cartier Theory of Commutative Formal Groups}, available at \url{https://perso.univ-rennes1.fr/matthieu.romagny/articles/zink.pdf}.}
}

\end{document}